\theoremstyle{plain}
\newtheorem{thm}{Theorem}[section]
\newtheorem{lem}[thm]{Lemma}
\newtheorem{prop}[thm]{Proposition}
\newtheorem{cor}[thm]{Corollary}
\theoremstyle{definition}
\newtheorem{definition}[thm]{Definition}
\theoremstyle{remark}
\newtheorem{exa}[thm]{Example}
\newtheorem{rem}[thm]{Remark}
\newtheorem{pro}[thm]{Problem}
\theoremstyle{plain}
\newtheorem*{thm*}{Theorem}
\newtheorem*{lem*}{Lemma}
\newtheorem*{prop*}{Proposition}
\newtheorem*{cor*}{Corollary}
\newtheorem*{ass*}{Assumptions}
\theoremstyle{definition}
\newtheorem*{definition*}{Definition}
\theoremstyle{remark}
\newtheorem*{rem*}{Remark}
\newtheorem*{prob*}{Problem}
\newtheorem*{exa*}{Example}
\newcommand{\field}[1]{\mathbb{#1}}
\newcommand{\setN}{\field{N}}
\newcommand{\setQ}{\field{Q}}
\newcommand{\setR}{\field{R}}
\newcommand{\setM}{\mathcal{M}}
\newcommand{\pp}{{\mathbb P}}
\newcommand{\leqc}{\leq_{\mathrm{c}}}
\newcommand{\conv}{\mathrm{conv}}
\DeclareMathOperator{\sgn}{sgn}
\setlist[enumerate,1]{label=$(\roman*)$, ref=$(\roman*)$}
\setlist[enumerate,2]{label=\alph*), ref=\theenumi\ \alph*}
\begin{document}

\title{Peacocks nearby: approximating sequences of measures}
\author{Stefan Gerhold \\ TU Wien \\ sgerhold@fam.tuwien.ac.at
  \and I. Cetin G\"ul\"um\thanks{We acknowledge financial support from FWF under grant P~24880,
  and thank Mathias Beiglb\"ock and the anonymous referees for helpful comments.}
   \\ TU Wien \\ ismail.cetin.gueluem@gmx.at
}
\date{\today}

\maketitle

\begin{abstract}
A peacock is a family of probability measures with finite mean that
increases in convex order.
It is a classical
result, in the discrete time case due to Strassen, that any peacock is the family
of one-dimensional marginals of a martingale.
We study the problem whether a given sequence of probability measures can be approximated by a peacock.
In our main results, the approximation quality is measured by the infinity Wasserstein distance.
Existence of a peacock within a prescribed distance is reduced to a countable collection
of rather explicit conditions.
This result has a financial application (developed in a separate paper),
as it allows to check European call option quotes for consistency.
The distance bound on the peacock than takes the role of a bound on the bid-ask spread
of the underlying.
We also solve the approximation problem for
the stop-loss distance, the L\'evy distance, and the Prokhorov distance.
\end{abstract}

\section{Introduction}
\setcounter{equation}{0}
\numberwithin{equation}{section}
A celebrated result, first proved by Strassen in 1965,\footnote{See Theorem 8 in~\cite{St65}.
(Another result from that paper, relative to the usual stochastic order instead of the convex
order,
is also sometimes referred to as Strassen's theorem; see~\cite{Li99}.)
}
states that, for a given sequence of probability measures $(\mu_n)_{n \in \setN}$, 
there exists a martingale $M=(M_n)_{n \in \setN}$ such that the law of $M_n$ is $\mu_n$ for all~$n$, 
if and only if all $\mu_n$ have finite mean and $(\mu_n)_{n \in \setN}$ is increasing in convex order
(see Definition~\ref{def:conv}).
Such sequences, and their continuous time counterparts,
are nowadays referred to as peacocks, a pun on the French
acronym PCOC, for ``Processus Croissant pour l’Ordre Convexe''~\cite{HiPrRoYo11}.
For further references on Strassen's theorem and its predecessors, see the appendix
of~\cite{DaHo07}, p.~380 of Dellacherie and Meyer~\cite{DeMe88}, and~\cite{Ar13}.
A constructive proof, and references to earlier constructive proofs,
are given in M\"uller and R\"uschendorf~\cite{MuRu01}.

The theorem gave rise to plenty of generalizations, one of the most
famous being Kellerer's theorem~\cite{Ke72,Ke73}.
It states that, for a peacock $(\mu_t)_{t \geq 0}$ with index set $\setR^+$, there is a
\emph{Markov} martingale $M=(M_t)_{t \geq 0}$ such that $M_t\sim \mu_t$ for all $t\geq0$.  
Several proofs and ramifications of Kellerer's theorem can be found in the literature. Hirsch and Roynette~\cite{HiRo12} construct martingales as solutions of stochastic differential equations and use  an approximation argument. Lowther~\cite{Lo08,Lo09} shows that under some regularity assumptions  there exists an ACD martingale with marginals $(\mu_t)_{t \geq 0}$.
Here, ACD stands for ``almost-continuous diffusion'', a condition implying the strong Markov
property and stochastic continuity.
Beiglb\"ock, Huesmann and Stebegg~\cite{BeHuSt16} use a certain solution of the Skorokhod problem, which is Lipschitz-Markov, to construct a martingale which is Markov.
The recent book by Hirsch, Profeta, Roynette, and Yor~\cite{HiPrRoYo11} contains a wealth of constructions
of peacocks and associated martingales.

The main question that we consider in this paper is the following: given $\epsilon>0$, a metric~$d$ on~$\setM$ -- the set of all probability measures on~$\setR$ with finite mean -- and a sequence of measures $(\mu_t)_{t \in T}$ in $\setM$, when does a sequence $(\nu_t)_{t \in T}$ in $\setM$ exist,
such that $d(\mu_t, \nu_t) \leq \epsilon$ and such that the sequence $(\nu_t)_{t \in T}$ is a peacock?  Here $T$ is either $\setN$ or the interval $[0,1]$.  Once we have constructed a peacock, we know, from the results mentioned above, that there is a martingale (with certain properties) with these marginals. We thus want to find out when there is a martingale $M$ such that the law of $M_t$ is close to $\mu_t$ for all~$t$. 
We will state necessary and sufficient conditions when $d$ is the infinity Wasserstein distance, the stop-loss distance,
the Prokhorov distance, and the L\'evy distance.

The infinity Wasserstein distance is a natural analogue of the $p$-Wasserstein distance.
Besides the dedicated probability metrics literature (e.g., \cite{Ra91,RaRuSc92}),
it has been studied in an optimal transport setting in~\cite{ChDeJu08}. 
It also has applications in graph theory, where it is referred to as the bottleneck distance (see p.~216 of~\cite{EdHa10}).
A well-known alternative representation of the infinity Wasserstein distance shows some
similarity to the L\'{e}vy distance. The stop-loss distance was introduced by Gerber in~\cite{Ge79} and has been studied in actuarial science (see for instance~\cite{DeDh92,KaVaGo88}).

For both of these metrics, we translate existence of a peacock within $\epsilon$-distance
into a more tractable condition: There has to exist a real number (with the interpretation
of the desired peacock's mean) that satisfies a countable collection
of finite-dimensional conditions, each explicitly expressed in terms
of the call functions $x\mapsto\int (y-x)^+  \mu_t(dy)$ of the given sequence of measures. For the infinity Wasserstein distance,
the existence proof is not constructive, as it uses Zorn's lemma.
For the stop-loss distance, the problem is much simpler, and our proof is short and
constructive. Note, though, that the result about the infinity Wasserstein distance admits
a financial application, which was the initial motivation for this work.
The problem is similar to the one considered by Davis and Hobson~\cite{DaHo07}: given a set of European call option prices with different maturities on the same underlying, we want to know when there is a model which is consistent with these prices. In contrast to Davis and Hobson we allow a bid-ask spread, bounded by some constant, on the underlying. This application will be
developed in the companion paper~\cite{GeGu16}.

Our proof approach is similar for both metrics: we will construct minimal and maximal elements (with respect to the convex order) in closed balls, and then use these elements to derive our conditions. In the case
of the infinity Wasserstein distance, we will make use of the lattice structure of certain subsets of closed balls.

The L\'evy distance was first introduced by L\'evy in 1925 (see~\cite{Le25}). Its importance is partially due to the fact that~$d^\mathrm{L}$ metrizes weak
convergence of measures on~$\setR$. 
The Prokhorov distance, first introduced in~\cite{Pr56}, is a metric on measures on an arbitrary separable metric space, and is often referred to as a generalization of the L\'evy metric, since $d^\mathrm{P}$ metrizes weak convergence on any separable metric space. 
For these two metrics, peacocks within $\epsilon$-distance \emph{always} exist,
and can be explicitly constructed.

The definition of the infinity Wasserstein distance yields a coupling representation, and it is a natural
question whether -- assuming the existence of a peacock nearby -- there is a filtered probability space on which the coupling can be
realized with a martingale.
For finite sequences of measures with finite support, we show in Theorem~\ref{thm:strassenW}
that this is true, thus extending (a special case of) Strassen's theorem in a novel direction.


The structure of the paper is as follows. Section~\ref{notation} specifies our notation and introduces the most important definitions. 
Section~\ref{infsec} contains our main result on approximation by peacocks
using the infinity Wasserstein distance. Its proof is given in Section~\ref{se:pr ram},
and a continuous time version
can be found in Section~\ref{se:cont}.
In Section~\ref{sec:SL} we will treat the approximation problem for the stop-loss distance.
After collecting some well-known facts on the L\'evy and Prokhorov distances in Section~\ref{sec:LP},
we will prove a criterion for approximation by peacocks under these metrics in
Section~\ref{se:thm P L}. Section~\ref{se:new Strassen} presents our novel
extension of (a special case of) Strassen's theorem, and some related open problems that we propose to tackle in future work.

\section{Notation and preliminaries} \label{notation}

Let $\setM$ denote the set of all probability measures on $\setR$ with finite mean. We start with the definition of convex order.
\begin{definition}\label{def:conv}
Let $\mu, \nu$ be two measures in $\setM$. Then we say that $\mu$ is smaller in convex order
than $\nu$, in symbols $\mu \leqc \nu$, if for every convex function $\phi: \setR \rightarrow \setR$ we have
$\int \phi \; d \mu \leq \int \phi \; d \nu$, whenever both integrals are
finite.\footnote{The apparently stronger requirement 
that the inequality $\int \phi \, d \mu \leq \int \phi \, d \nu$ holds 
for convex~$\phi$ whenever it
makes sense, i.e., as long as both sides exist in $(-\infty,\infty]$, leads to an
equivalent definition.
This can be seen by the following argument, similar to Remark~1.1 in~\cite{HiPrRoYo11}: Assume that
the inequality holds if both sides are finite, and let~$\phi$ (convex)
 be such that $\int \phi \, d \mu=\infty$.
We have to show that then $\int \phi \; d \nu=\infty$. Since~$\phi$ is the envelope of the
affine functions it dominates, we can find convex~$\phi_n$ with $\phi_n\uparrow \phi$ pointwise,
and such that each~$\phi_n$ is $C^2$ and $\phi_n''$ has compact support. By monotone convergence,
we then have $\int \phi \, d \nu= \lim \int \phi_n \, d \nu \geq
\lim \int \phi_n \, d \mu=\int \phi \; d \mu=\infty$. 
Note that that the convexity of $\phi$ guarantees that $\int \phi \, d \nu>-\infty$.} 
A family of measures $(\mu_t)_{t \in T}$ in $\setM$, where $T \subseteq [0,\infty)$, is called peacock, if $\mu_s \leqc \mu_t$ for all $s \leq t$ in~$T$ (see Definition~1.3 in~\cite{HiPrRoYo11}). 
\end{definition}
Intuitively, $\mu \leqc \nu$ means that~$\nu$ is more dispersed than~$\mu$, as convex integrands
tend to emphasize the tails.
By choosing $\phi(x)=x$ resp.\ $\phi(x)=-x$, we see that $\mu \leqc \nu$ implies that $\mu$ and $\nu$ have the same mean. 
As mentioned in the introduction, Strassen's theorem asserts the following:
\begin{thm}\label{thm:str}
(Strassen~\cite{St65})
  For any peacock with $T=\mathbb N$, there is a martingale whose family of one-dimensional marginal laws coincides with it.
\end{thm}
The converse implication is of course true as well, as a trivial consequence of Jensen's
inequality. As mentioned in the introduction, the equivalence also holds for time index
set $T=\mathbb R^+$ \cite{HiRo12,Ke72,Ke73}.
For $\mu \in \setM$ and $x \in \setR$ we define
\[
  R_\mu(x)= \int\nolimits_\setR (y-x)^+  \mu(dy) 
  \quad \text{and} \quad F_\mu(x)= \mu\bigl((-\infty,x]\bigr).
\]
We call $R_\mu$ the call function of $\mu$, as in financial terms it is the (undiscounted)
price of a call option with strike~$x$, written on an underlying with risk-neutral law~$\mu$ at maturity.
(It is also known as integrated survival function~\cite{MuRu01}.)
The mean of a measure~$\mu$ will be denoted by $\mathbb{E} \mu = \int y\, \mu(dy)$. 
The following proposition summarizes important properties of call functions.
All of them are well known. In particular, the equivalence in part~\ref{convexorderR} has been
used a lot to investigate the convex order; see, e.g., \cite{MuSt02}.
\begin{prop} \label{RFprop}
Let $\mu,\nu$ be two measures in $\setM$. Then:
\begin{enumerate}[label=(\roman*)]
\item \label{convexcall} $R_\mu$ is convex, decreasing and strictly decreasing on $\{R_\mu >0 \}$. Hence the right derivative of $R_\mu$ always exists and is denoted by $R'_\mu$.
\item \label{limitcall} $\lim_{x \rightarrow \infty} R_\mu(x)=0$ and $\lim_{x \rightarrow -\infty} (R_\mu(x)+x)=\mathbb{E} \mu$. In particular, if $\mu([a,\infty))=1$ for $a >- \infty$, then $\mathbb{E} \mu= R_\mu(a)+a$. 
\item \label{RFrel} $R'_\mu(x)=-1+F_\mu(x)$ and $R_\mu(x)= \int\nolimits_x^\infty (1-F_\mu(y)) \, dy$, for all $x \in \setR$.
\item \label{convexorderR} $\mu \leqc \nu$ holds if and only if $R_\mu(x) \leq R_\nu(x)$  for all $x \in \setR$ and  $\mathbb{E} \mu = \mathbb{E} \nu$.
\item \label{it:fund thm}
  For $x_1 \leq x_2\in\setR$, we have $R_\mu(x_2) - R_\mu(x_1)= \int_{x_1}^{x_2} R'_\mu(y)\, dy$.
\end{enumerate} 
Conversely, if a function $R: \setR \rightarrow \setR$ satisfies \ref{convexcall} and \ref{limitcall}, then there exists a probability measure $\mu \in \setM$ with finite mean such that $R_\mu =R$. 
\end{prop}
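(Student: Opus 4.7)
The plan is to treat the five enumerated items in an order that reflects their logical dependencies, then close with the converse construction. Parts \ref{convexcall} and \ref{limitcall} rest only on pointwise properties of the integrand $x\mapsto(y-x)^+$ and a dominated-convergence argument. Convexity and monotonicity of $R_\mu$ follow because $(y-x)^+$ is convex and decreasing in $x$ for every $y$; strict monotonicity on $\{R_\mu>0\}$ follows from the identity $R_\mu(x_1)-R_\mu(x_2)=\int\bigl[(y-x_1)^+-(y-x_2)^+\bigr]\mu(dy)$ for $x_1<x_2$, since the integrand is positive on $(x_1,\infty)$ and $R_\mu(x_1)>0$ means $\mu((x_1,\infty))>0$. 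Right-differentiability is a standard property of convex functions. For \ref{limitcall}, the limit at $+\infty$ is immediate from monotone/dominated convergence (with dominating function $y^+$), while for $x\leq 0$ we write $(y-x)^++x=\max(y,x)$ and check the bound $|\max(y,x)|\leq |y|$ to apply dominated convergence, yielding $\lim_{x\to-\infty}(R_\mu(x)+x)=\int y\,\mu(dy)=\E\mu$.

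For \ref{RFrel}, I would differentiate $R_\mu$ by interchanging derivative and integral: the right derivative of $x\mapsto(y-x)^+$ is $-\ind_{\{y>x\}}$, uniformly bounded by $1$, so differentiation under the integral gives $R'_\mu(x)=-\mu((x,\infty))=F_\mu(x)-1$. The integral representation comes from Fubini applied to $(y-x)^+=\int_x^\infty \ind_{\{z<y\}}\,dz$:
\[
R_\mu(x)=\int_\setR\int_x^\infty\ind_{\{z<y\}}\,dz\,\mu(dy)=\int_x^\infty (1-F_\mu(z))\,dz.
\]
Item \ref{it:fund thm} is then a consequence of the fundamental theorem of calculus applied to the right-continuous, bounded function $R'_\mu=F_\mu-1$, or equivalently a standard property of convex functions representing themselves as the integral of their right derivative.

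Item \ref{convexorderR} is the main content. The forward direction is trivial: $y\mapsto(y-x)^+$ is convex, and so are $\pm y$, so $\mu\leqc\nu$ forces pointwise domination of call functions and equality of means. The converse is where the actual work lies: given $R_\mu\leq R_\nu$ and $\E\mu=\E\nu$, one must pass from the test class $\{(\cdot-x)^+: x\in\setR\}$ to all convex $\phi$. The standard route is to approximate a convex $\phi$ with compactly supported second derivative by its integral representation $\phi(y)=a+by+\int \phi''(x)(y-x)^+\,dx$, which immediately gives $\int\phi\,d\mu\leq\int\phi\,d\nu$ by Fubini (the linear part contributes equally by the mean equality). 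A general convex $\phi$ with $\int\phi\,d\mu,\int\phi\,d\nu$ finite is then handled by a monotone approximation $\phi_n\uparrow\phi$ with $\phi_n\in C^2$ and $\phi_n''$ of compact support, as already sketched in the footnote on Definition~\ref{def:conv}; this is the step I expect to demand the most care.

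For the converse statement, given $R$ satisfying \ref{convexcall} and \ref{limitcall}, define $F(x):=1+R'(x)$ using the right derivative guaranteed by convexity. Then $F$ is non-decreasing (by convexity of $R$) and right-continuous (as right derivatives of convex functions are), so it remains to verify $F(-\infty)=0$ and $F(+\infty)=1$. Convexity combined with $R(x)\to 0$ forces $R'(x)\to 0$ as $x\to\infty$, while $R(x)+x\to m\in\setR$ combined with convexity of $R+x$ (with the same limit behaviour) forces $R'(x)\to -1$ as $x\to -\infty$. Hence $F$ is a bona fide distribution function; let $\mu$ be the associated probability measure. Using the already established formula $R_\mu(x)=\int_x^\infty(1-F_\mu(y))\,dy$ and the analogous identity $R(x)=\int_x^\infty(1-F(y))\,dy$ (which follows from \ref{it:fund thm} applied to $R$, together with $R(+\infty)=0$), we conclude $R_\mu=R$. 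Finiteness of $\E\mu$ follows from \ref{limitcall}.
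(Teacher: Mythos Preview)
Your proposal is correct and self-contained; the paper's own proof consists almost entirely of literature citations (to \cite{HiRo12}, Proposition~2.1, and \cite{HiPrRoYo11}, Exercise~1.7), with only a one-line remark on item~\ref{it:fund thm} pointing to~\cite{BoGo86} for the fundamental theorem of calculus for right derivatives. So rather than taking a different route, you have supplied the arguments the paper delegates to its references.

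One minor stylistic remark: in your treatment of~\ref{convexorderR}, the approximation step for general convex~$\phi$ via $\phi_n\uparrow\phi$ with $\phi_n''$ compactly supported is indeed the point requiring care, and you correctly flag it; the paper's footnote on Definition~\ref{def:conv} handles a closely related but not identical issue (passing from ``both integrals finite'' to ``both sides exist in $[-\infty,\infty]$''), so you should not lean on that footnote as if it already settled the converse in~\ref{convexorderR}. The argument you outline (integral representation via $\phi''$ plus Fubini, then monotone approximation) is the standard one and works, but it is your own contribution here, not something the paper has already done for you.
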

  As for~\ref{it:fund thm}, note that~$R_\mu'$ is increasing, thus integrable, and that
  the fundamental theorem of calculus holds for right derivatives. See~\cite{BoGo86} for a short proof.
  The other assertions of Proposition~\ref{RFprop}
  are proved in~\cite{HiRo12}, Proposition 2.1, and~\cite{HiPrRoYo11}, Exercise 1.7.
%
For a metric $d$ on $\setM$, denote by $B^d (\mu, \epsilon)$ the closed ball with respect to  $d$, with center $\mu$ and radius $\epsilon$. Then our main question is:
\begin{pro}\label{pr:main}
  Given $\epsilon>0$, a metric~$d$ on~$\setM$, and
  a sequence $(\mu_n)_{n\in\setN}$ in~$\mathcal{M}$, when does there exist
a peacock $(\nu_n)_{n\in\setN}$ with $\nu_n\in B^d (\mu_n, \epsilon)$ for all~$n$?
\end{pro}
Note that this can also be phrased as
\[
  d_\infty\bigl((\mu_n)_{n\in\setN},(\nu_n)_{n\in\setN}\bigr)\leq\epsilon,
\]
where
\[
  d_\infty\bigl((\mu_n)_{n\in\setN},(\nu_n)_{n\in\setN}\bigr) = \sup_{n\in\setN} d(\mu_n,\nu_n)
\]
defines a metric on $\setM^\setN$ (with possible value infinity).
For some results on this kind of infinite product metric,
we refer to~\cite{Bo91}.
Clearly, a solution to Problem~\ref{pr:main} settles the case of finite sequences
$(\mu_n)_{n=1,\dots,n_0}$, too, by extending the sequence with $\mu_n:=\mu_{n_0}$ for $n> n_0$.

To fix ideas, consider the case where the given sequence $(\mu_n)_{n=1,2}$ has only two elements.
We want to find measures $\nu_n \in B^d(\mu_n, \epsilon)$, $n=1,2$, such that $\nu_1 \leqc \nu_2$.
Intuitively, we want 
$\nu_1$ to be as small as possible and $\nu_2$ to be as large as possible, in the convex order.
Recall that a peacock has constant mean, which is fixed as soon as~$\nu_1$ is chosen.  We will denote
the set of probability measures on $\setR$ with mean $m\in\setR$ by $\setM_{m}$. 
These considerations lead us to the following problem.
\begin{pro} \label{probSLgen} 
Suppose that a metric $d$ on $\setM$, a measure $\mu \in \setM$ and two numbers $\epsilon>0$
and $m\in\mathbb R$ are given. When are there two measures $\mu^{\min}, \mu^{\max} \in B^{d}(\mu, \epsilon) \cap \setM_{m}$ such that
\begin{equation*}
\mu^{\min} \leqc \nu \leqc \mu^{\max}, \quad \mbox{for all} \ \nu \in B^{d}(\mu, \epsilon) \cap \setM_{m}\ ?
\end{equation*}
\end{pro}
We now recall the definition of the infinity
Wasserstein distance\footnote{The name ``\emph{infinite} Wasserstein distance'' is also in use,
but ``\emph{infinity} Wasserstein distance'' seems to make more sense
(cf.\ ``\emph{infinity} norm'').} $W^\infty$, and its connection to call
functions.
\begin{definition} \label{infwassersteindistance}
The infinity Wasserstein distance is the mapping
$W^\infty: \setM \times \setM \rightarrow [0,\infty]$ defined by
\begin{equation*}
W^\infty (\mu,\nu) = \inf \left\|X-Y \right\|_\infty,
\end{equation*} 
where the infimum is taken over all probability spaces $(\Omega,\mathcal{F}, \pp)$ 
and random pairs $(X,Y)$ with marginals given by $\mu$ and $\nu$.
\end{definition}
For various other probability metrics and their relations, see~\cite{GiSu02,Ra91}.
We will use the words ``metric'' and ``distance'' for
mappings $\setM\times\setM \to [0,\infty]$ in a loose sense.
Since all our results concern \emph{concrete} metrics, there is no need to give
a general definition (as, e.g., Definition~1 in Zolotarev~\cite{Zo76}).
 The metric $W^\infty$ has the following representation in terms of call functions (see, e.g., \cite{Li02}, p.~127):
\begin{equation}\label{eq:W repr}
  W^\infty (\mu,\nu) = \inf \Bigl\{h >0: R'_\mu(x-h)
    \leq R'_\nu(x) \leq R'_\mu(x+h),\ \forall x \in \setR \Bigr\}.
\end{equation}
By~\eqref{eq:W repr} and Proposition~\ref{RFprop}~\ref{RFrel}, $W^\infty$ can also be written as
\[
  W^\infty (\mu,\nu) = \inf \Bigl\{h >0: F_\mu(x-h)
    \leq F_\nu(x) \leq F_\mu(x+h),\ \forall x \in \setR \Bigr\}.
\]
%
%
We will see below (Proposition~\ref{LSwinf}) that, when~$d$ is the infinity Wasserstein distance,
Problem~\ref{probSLgen}  has a solution $(\mu^{\min},\mu^{\max})$ if and only if
$|m-\mathbb{E}\mu|\leq\epsilon$.
As an easy consequence, given $(\mu_n)_{n=1,2}$,
the desired ``close'' peacock  $(\nu_n)_{n=1,2}$ exists if and only if there is
an~$m$ with $|m-\mathbb{E}\mu_1|\leq\epsilon$, $|m-\mathbb{E}\mu_2|\leq\epsilon$
such that the corresponding measures $\mu_1^{\min},\mu_2^{\max}$ satisfy $\mu_1^{\min}\leqc \mu_2^{\max}$.
Then, $(\nu_1,\nu_2)=(\mu_1^{\min},\mu_2^{\max})$ is a possible choice.

Besides the infinity Wasserstein distance, we will solve Problems~\ref{pr:main} and~\ref{probSLgen} also
for the stop-loss distance (Proposition~\ref{LSdsl}),
for index sets~$\setN$ and $[0,1]$
(see Theorems~\ref{strassenwinf}, \ref{thm:01}, \ref{mainsl}, and~\ref{thm:sl01}).
For the L\'evy distance and the Prokhorov distance we will use different techniques and solve
Problem~\ref{pr:main} for index set~$\setN$
(see   Corollary~\ref{cor:strassendp} and Theorem~\ref{thm: dpLstrassen}).

\section{Approximation by peacocks: infinity Wasserstein distance (discrete time)} \label{infsec}

We now start to investigate the interplay between the infinity Wasserstein distance and the convex order.
Recall that $\setM_m$ denotes the set of probability measures on~$\mathbb R$ with mean~$m$.
It is a well known fact that the ordered set $(\setM_m, \leqc)$ is a lattice for all $m \in \setR$,
with least element~$\delta_m$ (Dirac delta). See for instance \cite{KeRo00,MuSc06}.
The lattice property means that,  given any two measures $\mu, \nu \in \setM_m$, there is a unique supremum, denoted by $\mu \vee \nu$, and a unique infimum, denoted by $\mu \wedge \nu$, with respect to convex order. It is easy to prove that the corresponding call functions are
 $R_{\mu \vee \nu}=R_\mu\vee R_\nu$ and $R_{\mu \wedge \nu}=\conv(R_\mu,R_\nu)$. Here and in the following $\conv(R_\mu,R_\nu)$ denotes the convex hull  of $R_\mu$ and $R_\nu$, i.e., the largest convex function that is majorized by $R_\mu \wedge R_\nu$. 

In the following we will denote balls with respect to $W^\infty$ by $B^\infty$. The next lemma shows that $(B^\infty(\mu,\epsilon) \cap \setM_m, \leqc)$ is a sublattice of  $(\setM_m, \leqc)$, which will be important afterwards. Recall that two measures can be comparable w.r.t.\ convex order only if their means agree.
This accounts for the relevance of sublattices of the form
$(B^\infty(\mu,\epsilon) \cap \setM_m, \leqc)$ for our problem: If a peacock $(\nu_n)_{n\in\setN}$
satisfying $\nu_n\in B^\infty(\mu_n,\epsilon)$ for all $n\in\setN$ exists, then we necessarily have
$\nu_n\in B^\infty(\mu_n,\epsilon)\cap \setM_m$, $n\in\setN$, with
$\mathbb{E}\nu_1=\mathbb{E}\nu_2=\dots=m$.

\begin{lem} \label{mininkugel}
  Let $\epsilon>0$ and $\mu, \nu_1, \nu_2 \in \setM$ such that $\mathbb{E}\nu_1=\mathbb{E}\nu_2=m$. 
  Then if $\nu_1, \nu_2 \in B^\infty(\mu, \epsilon)\cap \setM_{m}$ we have 
  $\nu_1 \vee \nu_2 \in B^\infty(\mu, \epsilon)\cap \setM_{m}$ 
  and $\nu_1 \wedge \nu_2 \in B^\infty(\mu, \epsilon)\cap \setM_{m}$. 
\end{lem}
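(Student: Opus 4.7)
The plan is to verify both membership claims via the call-function representation~\eqref{eq:W repr} of~$W^\infty$, combined with the identities $R_{\nu_1\vee\nu_2}=R_{\nu_1}\vee R_{\nu_2}$ and $R_{\nu_1\wedge\nu_2}=\conv(R_{\nu_1},R_{\nu_2})$ recalled immediately before the lemma. Since $\vee$ and $\wedge$ denote the lattice operations inside $(\setM_m,\leqc)$, both $\nu_1\vee\nu_2$ and $\nu_1\wedge\nu_2$ automatically lie in $\setM_m$, so only the $W^\infty$-balls have to be checked. Fixing an arbitrary $h>\epsilon$, the hypothesis provides $R'_\mu(x-h)\leq R'_{\nu_j}(x)\leq R'_\mu(x+h)$ for $j=1,2$ and every $x\in\setR$, and I need to transfer this sandwich to $\nu_1\vee\nu_2$ and $\nu_1\wedge\nu_2$; taking the infimum over such~$h$ at the end yields $W^\infty(\mu,\cdot)\leq\epsilon$.

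The supremum case is easy. At any~$x$, continuity of $R_{\nu_1}$ and $R_{\nu_2}$ forces the right derivative of $R_{\nu_1}\vee R_{\nu_2}$ to equal $R'_{\nu_j}(x)$ for whichever~$j$ has $R_{\nu_j}$ strictly larger in a right neighbourhood, or $\max(R'_{\nu_1}(x),R'_{\nu_2}(x))$ when $R_{\nu_1}(x)=R_{\nu_2}(x)$. Either way $R'_{\nu_1\vee\nu_2}(x)$ lies in $[\min_j R'_{\nu_j}(x),\max_j R'_{\nu_j}(x)]$, so the sandwich for $\nu_1\vee\nu_2$ is inherited from those for the individual~$\nu_j$.

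For the infimum I would analyse the geometry of $R:=\conv(R_{\nu_1},R_{\nu_2})$ at a fixed~$x$. A continuity argument produces a dichotomy: either $R$ coincides with $R_{\nu_1}\wedge R_{\nu_2}$ on a right neighbourhood of~$x$, so that $R'(x)=R'_{\nu_j}(x)$ for whichever~$j$ yields the smaller call function locally, and the sandwich then follows directly from the hypothesis; or $x$ belongs to a maximal open interval $(a,b)$ on which $R$ is affine and lies strictly below $R_{\nu_1}\wedge R_{\nu_2}$, with $R(a)=R_{\nu_{i_a}}(a)$ and $R(b)=R_{\nu_{i_b}}(b)$ for suitable indices $i_a,i_b\in\{1,2\}$. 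In the affine case, comparing left- and right-sided difference quotients of $R\leq R_{\nu_j}$ at the touching points $a$ and $b$ yields the slope bounds $R'_{\nu_{i_b}}(b^-)\leq R'(x)\leq R'_{\nu_{i_a}}(a)$. Combining these with the hypothesis, the monotonicity of $R'_\mu$, and the ordering $a\leq x<b$ gives $R'(x)\leq R'_\mu(a+h)\leq R'_\mu(x+h)$, and symmetrically $R'(x)\geq R'_{\nu_{i_b}}(b^-)\geq R'_\mu((b-h)^-)\geq R'_\mu(x-h)$, the last step exploiting the strict inequality $b>x$.

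The step I expect to require the most care is justifying the dichotomy: any touching point of $R$ and $R_{\nu_1}\wedge R_{\nu_2}$ without a full right neighbourhood of coincidence must be the left endpoint of a maximal affine segment of~$R$, because any strict gap $R<R_{\nu_1}\wedge R_{\nu_2}$ immediately to the right of~$x$ is spanned by an affine piece of the convex hull whose left endpoint cannot lie beyond a touching point. Once this geometric fact is in hand, the rest reduces to monotonicity and right-continuity of $R'_\mu=F_\mu-1$, and the proof concludes by letting $h\downarrow\epsilon$.
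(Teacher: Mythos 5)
Your proof is correct and follows essentially the same route as the paper: mean preservation from the lattice identities for call functions, the supremum handled by noting that the right derivative of $R_{\nu_1}\vee R_{\nu_2}$ is sandwiched between those of $R_{\nu_1}$ and $R_{\nu_2}$, and the infimum handled by bounding the slope of each affine segment of $\conv(R_{\nu_1},R_{\nu_2})$ via difference quotients at its touching points (the paper's chain $R'_{\mu}(x-\epsilon)\leq R_2'(x)\leq R_2'(x_2-)\leq\text{slope}\leq R_1'(x_1)\leq R_1'(x)\leq R'_\mu(x+\epsilon)$ is your estimate in slightly different packaging). Your extra care with $h>\epsilon$ and the explicit dichotomy for the convex hull are harmless refinements of the same argument.
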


\begin{proof}
Denote the call functions of $\nu_1$ and $\nu_2$ with $R_1$ and $R_2$.
We start with $\nu_1 \vee \nu_2$. It is easy to check that $R: x \mapsto R_1(x)\vee R_2(x)$ is a call function satisfying $R'(x) \in \{R_1'(x), R_2'(x)\}$ for all~$x\in\setR$.
By Proposition~\ref{RFprop}~\ref{limitcall}, it is also clear that $\nu_1 \vee \nu_2\in \setM_m$.
This proves the assertion. 

As for the infimum, we will first assume that there exists $x_0 \in \setR $ such that $R_1(x) \leq R_2(x)$ for $x \leq x_0$ and $R_2(x) \leq R_1(x)$ for $x \geq x_0.$ 
Then there exist $x_1 \leq x_0$ and $x_2 \geq x_0$ such that the convex hull of $R_1$ and $R_2$ can be written as (see~\cite{Ob07})
\begin{equation*}
\mathrm{conv}(R_1,R_2)(x) = \begin{cases}R_1(x), & x \leq x_1,  \\
R_1(x_1)+\frac{R_2(x_2)-R_1(x_1)}{x_2-x_1} (x-x_1), & x \in [x_1,x_2], \\
R_2(x), & x\geq x_2. \end{cases}
\end{equation*}
Now observe that for all $x \in [x_1, x_2)$
\begin{align*}
R_\mu'(x-\epsilon) &\leq R_2'(x) \leq  R_2'(x_2-) \\
 &\leq \frac{R_2(x_2)-R_1(x_1)}{x_2-x_1} \\
 &\leq R_1'(x_1) \leq R_1'(x) \leq R_\mu'(x+\epsilon),
\end{align*}
and hence $\mathrm{conv}(R_1,R_2)'(x) \in [R_\mu'(x-\epsilon), R_\mu'(x+\epsilon)]$. Therefore $\nu_1 \wedge \nu_2 \in B^\infty(\mu, \epsilon)\cap \setM_{m}$. 

For the general case, note that for all $x \in \setR$ we have by~\cite{Ob07} that either $\mathrm{conv}(R_1,R_2)(x)= R_\mu(x) \wedge R_\nu(x)$, or that~$x$ lies in an interval $I$ such that $\mathrm{conv}(R_1,R_2)$ is affine on $I$. If the latter condition is the case, then we can derive bounds for the right-derivative $\mathrm{conv}(R_1,R_2)'(x), x \in I$, exactly as before. The situation is clear if 
either $\mathrm{conv}(R_1,R_2)(x)=R_1(x)$ or $\mathrm{conv}(R_1,R_2)(x)=R_2(x)$. 
\end{proof}
We now show that the sublattice $(B^\infty(\mu,\epsilon) \cap \setM_m, \leqc)$ contains a least and a greatest element with respect to the convex order. This is the subject of the following proposition,
which solves Problem~\ref{probSLgen} for the infinity Wasserstein distance.
As for the assumption~$m \in [\mathbb{E}\mu-\epsilon,\mathbb{E}\mu+\epsilon]$
in Proposition~\ref{LSwinf}, it is necessary to ensure
that $B^\infty(\mu,\epsilon) \cap \setM_m$ is not empty. Indeed, if $W^\infty(\mu_1,\mu_2)\leq \epsilon$
for some $\mu_1,\mu_2\in\setM$, then by~\eqref{eq:W repr}, Proposition~\ref{RFprop} \ref{limitcall},
\ref{it:fund thm}, and the continuity of call functions, we obtain
\begin{equation}\label{eq:R est}
  R_{\mu_1}(x+\epsilon) \leq R_{\mu_2}(x) \leq R_{\mu_1}(x-\epsilon), \quad x\in\setR.
\end{equation}
By part~\ref{limitcall} of Proposition~\ref{RFprop}, it follows that
$|\mathbb{E}\mu_1 - \mathbb{E}\mu_2|\leq \epsilon$.
\begin{prop}\label{LSwinf}
Given $\epsilon>0$, a measure $\mu \in \setM$ and $m \in [\mathbb{E}\mu-\epsilon,\mathbb{E}\mu+\epsilon]$, there exist unique measures $S(\mu), T(\mu) \in B^\infty(\mu,\epsilon) \cap \setM_m$ such that
\begin{equation*}
S(\mu) \leqc \nu \leqc T(\mu) \quad \text{for all} \ \nu \in B^\infty(\mu,\epsilon) \cap \setM_m.
\end{equation*}
The call functions of $S(\mu)$ and $T(\mu)$ are explicitly given by
\begin{align}
R^{\min}_\mu(x)&=R_{S(\mu)}(x)=\Bigl(m+R_\mu(x-\epsilon)-\bigl(\mathbb{E}\mu+\epsilon \bigr) \Bigr) \vee R_\mu(x+\epsilon), \label{eq:Rmin} \\
R_\mu^{\max}(x)&=R_{T(\mu)}(x) = \conv\Bigl(m+R_\mu(\cdot +\epsilon)-\bigl(\mathbb{E}\mu-\epsilon \bigr) ,R_\mu(\cdot -\epsilon) \Bigr)(x). \label{eq:Rmax}
\end{align}
To highlight the dependence on $\epsilon$ and $m$ we will sometimes write $S(\mu; m, \epsilon)$ and  $R^{\min}_\mu(\: \cdot \:; m, \epsilon)$, respectively
$T(\mu; m, \epsilon)$ and  $R^{\max}_\mu(\: \cdot \:; m, \epsilon)$.
\end{prop}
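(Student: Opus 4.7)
The plan is to verify directly, using the call-function characterization~\eqref{eq:W repr} of $W^\infty$, that the explicit formulas~\eqref{eq:Rmin} and~\eqref{eq:Rmax} define call functions of the two extremal measures. Uniqueness is then automatic: two mutually $\leqc$-comparable measures in~$\setM_m$ have equal call functions, hence coincide, by Proposition~\ref{RFprop}~\ref{convexorderR}.

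First I would check, via the converse direction of Proposition~\ref{RFprop}, that the right-hand sides of~\eqref{eq:Rmin} and~\eqref{eq:Rmax} are call functions of measures in~$\setM_m$. Convexity is immediate (maximum of two convex functions, resp.\ convex hull by construction), and monotonicity is inherited from~$R_\mu$. The hypothesis $m\in[\mathbb{E}\mu-\epsilon,\mathbb{E}\mu+\epsilon]$ enters precisely to secure the correct asymptotics: substituting the limits from Proposition~\ref{RFprop}~\ref{limitcall} into the formulas shows $R^{\min}_\mu(x),R^{\max}_\mu(x)\to 0$ as $x\to+\infty$ and $R^{\min}_\mu(x)+x, R^{\max}_\mu(x)+x\to m$ as $x\to-\infty$. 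To establish ball membership I would use~\eqref{eq:W repr} and verify that the right derivative of each candidate lies in $[R'_\mu(\cdot-\epsilon),R'_\mu(\cdot+\epsilon)]$: for $R^{\min}_\mu$, the derivative at each~$x$ coincides with either $R'_\mu(x-\epsilon)$ or $R'_\mu(x+\epsilon)$ according to which branch of the maximum is active, so monotonicity of~$R'_\mu$ concludes; for $R^{\max}_\mu$ the same reasoning covers the pieces on which the convex hull agrees with one of its two generators, while on any affine bridge $[x_1,x_2]$ the slope~$s$ satisfies $R'_\mu((x_2-\epsilon)^-)\leq s\leq R'_\mu(x_1+\epsilon)$, as in the estimate used in the proof of Lemma~\ref{mininkugel}.

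For extremality, consider an arbitrary $\nu\in B^\infty(\mu,\epsilon)\cap\setM_m$ and set
\begin{equation*}
  g(x):=R_\mu(x-\epsilon)-R_\nu(x), \qquad h(x):=R_\nu(x)-R_\mu(x+\epsilon).
\end{equation*}
Both have nonpositive right derivatives by~\eqref{eq:W repr}, so they are nonincreasing; Proposition~\ref{RFprop}~\ref{limitcall} yields $g(-\infty)=\mathbb{E}\mu+\epsilon-m$, $h(-\infty)=m-\mathbb{E}\mu+\epsilon$, and $g(+\infty)=h(+\infty)=0$. Monotonicity therefore gives
\begin{equation*}
  m+R_\mu(x-\epsilon)-(\mathbb{E}\mu+\epsilon)\leq R_\nu(x)\leq m+R_\mu(x+\epsilon)-(\mathbb{E}\mu-\epsilon), \qquad x\in\setR.
\end{equation*}
Combined with the envelope~\eqref{eq:R est} already available, the lower estimate upgrades to $R_\nu\geq R^{\min}_\mu$, proving $S(\mu)\leqc\nu$; the upper estimate, together with $R_\nu\leq R_\mu(\cdot-\epsilon)$, exhibits~$R_\nu$ as a convex function majorized by both generators in~\eqref{eq:Rmax}, which by the defining property of the convex hull forces $R_\nu\leq R^{\max}_\mu$, i.e.\ $\nu\leqc T(\mu)$.

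The only genuinely delicate point I anticipate is the derivative control on the affine bridges of the convex hull in the second half of the ball-membership argument, which requires the structural description of $\conv(R_1,R_2)$ recorded in the proof of Lemma~\ref{mininkugel}. Everything else reduces to routine bookkeeping with the one-sided asymptotics of call functions.
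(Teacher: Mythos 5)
Your proposal is correct, and its overall skeleton — verify that the explicit formulas define call functions with mean $m$ via the converse direction of Proposition~\ref{RFprop}, establish ball membership through the derivative inclusion $(R)'(x)\in[R_\mu'(x-\epsilon),R_\mu'(x+\epsilon)]$ (branch-switching for the maximum, the affine-bridge estimate of Lemma~\ref{mininkugel} for the convex hull), and then deduce extremality from pointwise domination of call functions — coincides with the paper's. The one place where you genuinely diverge is the extremality step. The paper disposes of minimality of $S(\mu)$ with a qualitative remark (to the left of the crossover point $v$ the function $R_\mu^{\min}$ is ``as steep as possible'', to the right ``as flat as possible'', hence least by convexity) and does not spell out maximality of $T(\mu)$ at all. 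You instead introduce $g=R_\mu(\cdot-\epsilon)-R_\nu$ and $h=R_\nu-R_\mu(\cdot+\epsilon)$, observe they are nonincreasing with computable limits at $\pm\infty$, and squeeze out the two-sided bounds $m+R_\mu(x-\epsilon)-(\mathbb{E}\mu+\epsilon)\leq R_\nu(x)\leq m+R_\mu(x+\epsilon)-(\mathbb{E}\mu-\epsilon)$; combined with~\eqref{eq:R est} and, for the upper bound, the defining property of the convex hull applied to the convex function $R_\nu$, this gives $R_\mu^{\min}\leq R_\nu\leq R_\mu^{\max}$ directly. This is a cleaner and more complete execution of the step the paper leaves implicit (it is in fact the same monotonicity device the paper uses earlier to derive~\eqref{eq:R est}, pushed one step further), at the cost of no additional machinery; the paper's version is shorter but relies on the reader to supply exactly the argument you wrote down.
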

\begin{proof}
We define $R_\mu^{\min}$ and $R_\mu^{\max}$ by the right hand sides of~\eqref{eq:Rmin}
resp.~\eqref{eq:Rmax}, and argue that the associated measures $S(\mu)$ and $T(\mu)$ have the stated property.
Clearly $R_\mu^{\min}$ is a call function, and we have 
\begin{align*}
\mathbb{E}R_\mu^{\min} &= \lim_{x \rightarrow -\infty} 
  \Bigl( m+R_\mu(x-\epsilon)-\bigl(\mathbb{E}\mu+\epsilon \bigr) +x \Bigr)
  \vee\Bigl(R_\mu(x+\epsilon)+x\Bigr) \\
  &= m \vee \bigl(\mathbb{E}\mu - \epsilon\bigr)=m.
\end{align*}
{}From the convexity of $R_\mu$ we can deduce the existence of $v \in \setR \cup \{\pm \infty\}$ such that
\[
R_\mu^{\min}(x) = \begin{cases}m+R_\mu(x-\epsilon)-\bigl(\mathbb{E}R_\mu+\epsilon \bigr), & x \leq v, \\ 
                    R_\mu(x+\epsilon)  & x \geq v. \\
									\end{cases}
\]
Hence we get that $(R_\mu^{\min})'(x) \in [R_\mu'(x-\epsilon), R_\mu'(x+\epsilon)]$ for all~$x$.
By~\eqref{eq:W repr}, the measure associated with
$R_\mu^{\min}$ lies in $B^\infty(\mu,\epsilon) \cap \setM_m$. To the left of~$v$,
$R_\mu^{\min}$ is as steep as possible (where steepness refers to the absolute value of the
right derivative), and to the right of~$v$ it is as flat as possible
(see Figure~\ref{fig:rmin}).
\begin{figure}[ht]
   \begin{center}
      \includegraphics[width=10cm]{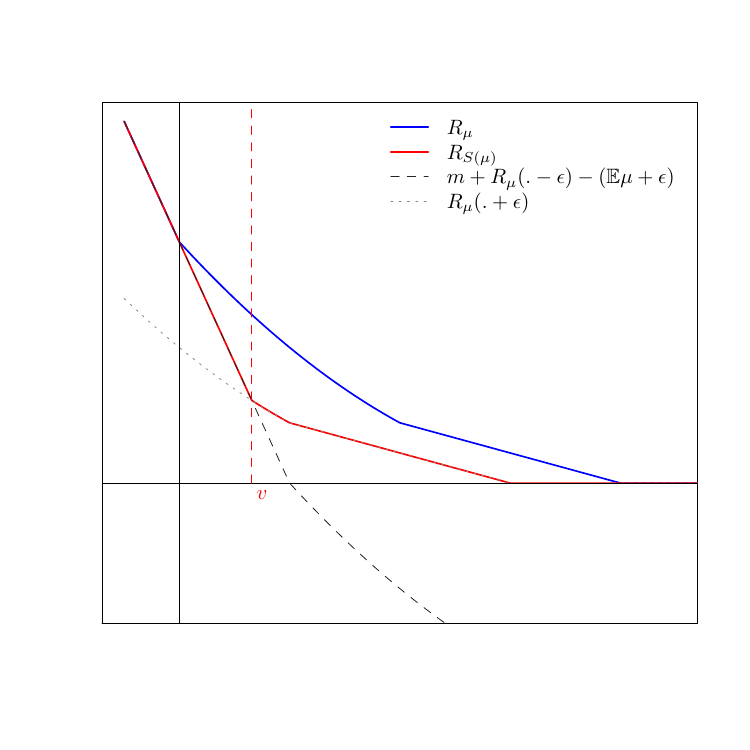}
   \caption{Illustration of the function $R_\mu^{\min}=R_{S(\mu)}$ (lower solid curve): to the left of~$v$, $R_{S(\mu)}$ is as steep as possible and to the right of~$v$, $R_{S(\mu)}$ is as small as possible.}
     \label{fig:rmin}
   \end{center}
\end{figure}
From this and convexity,
it is easy to see that~$S(\mu)$ is a least element.

Similarly we can show that $\mathbb{E}R_\mu^{\max}=m$, and thus it suffices to show that
\[
  (R_\mu^{\max})'(x) \in [R_\mu'(x-\epsilon), R_\mu'(x+\epsilon)].
\]
But this can be done exactly as in Lemma \ref{mininkugel}.
\end{proof}

\begin{rem}
It is not hard to show that 
\begin{equation*} 
R_\mu^{\max}(x)= \begin{cases}m+R_\mu(x+\epsilon)-\bigl(\mathbb{E}\mu-\epsilon \bigr), & x  \leq x_1, \\[1mm] 
                   R_\mu(x_1+\epsilon) + \frac{\bigl(\mathbb{E}\mu-\epsilon \bigr)-m}{2\epsilon} \bigl(x-x_1-2\epsilon),  & x \in [x_1, x_1+2\epsilon],\\
									R_\mu(x-\epsilon), & x \geq x_1+2\epsilon,\end{cases} \\
\end{equation*}
where $$
x_1=  \inf \biggl\{x \in \setR : R_\mu'(x+\epsilon)\geq- \frac{m-\bigl(\mathbb{E}\mu-\epsilon \bigr)}{2\epsilon} \biggr\}.$$
\end{rem}
Before formulating our first main theorem, we recall that a peacock is a sequence
of probability measures with finite mean and increasing w.r.t.\ convex order (Definition~\ref{def:conv}).
We now give a simple reformulation of this property.
For a given sequence of call functions~$(R_n)_{n\in\setN}$, define, for $N\in\setN$ and
$x_1,\dots,x_N\in\setR$,
\begin{equation}\label{eq:Phi 0}
  \Phi_N(x_1,\dots,x_N) = R_1(x_1) + \sum_{n=2}^N\Big(R_n(x_n)-R_n(x_{n-1})\Big) - R_{N+1}(x_N).
\end{equation}
\begin{prop}\label{prop:consistent}
  A sequence of call functions $(R_n)_{n\in\setN}$ with constant mean defines a peacock if and only if
  $\Phi_N(x_1,\dots,x_N) \leq 0$ for all $N\in\setN$ and $x_1,\dots,x_N\in\setR$.
\end{prop}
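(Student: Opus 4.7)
The plan is to observe that the expression in~\eqref{eq:Phi 0} telescopes into a transparent pointwise form, after which both implications reduce to the convex-order characterization of Proposition~\ref{RFprop}~\ref{convexorderR}.

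First I would rearrange $\Phi_N$ by grouping the two terms involving each function $R_n$. The summand $-R_n(x_{n-1})$ (for $n\geq 2$) pairs with $R_{n-1}(x_{n-1})$, and the trailing $-R_{N+1}(x_N)$ pairs with $R_N(x_N)$. This gives the identity
\[
  \Phi_N(x_1,\dots,x_N) = \sum_{n=1}^{N} \bigl(R_n(x_n) - R_{n+1}(x_n)\bigr).
\]
With this in hand, the forward implication is immediate: if $(R_n)$ is a peacock, then by Proposition~\ref{RFprop}~\ref{convexorderR} together with the equal-means assumption, $R_n(x)\leq R_{n+1}(x)$ pointwise, so every summand is nonpositive and $\Phi_N\leq 0$.

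For the converse, I would fix $k\in\setN$ and $x\in\setR$ and aim to show $R_k(x)\leq R_{k+1}(x)$. Specializing to $N=k$ with $x_k=x$, I would let the remaining arguments $x_1,\dots,x_{k-1}$ tend to $+\infty$. By Proposition~\ref{RFprop}~\ref{limitcall}, each $R_n(x_n)$ and $R_{n+1}(x_n)$ vanishes as $x_n\to\infty$, so every summand with index $n<k$ drops out in the limit, leaving $R_k(x)-R_{k+1}(x)\leq 0$. Combined with the constant-mean hypothesis, Proposition~\ref{RFprop}~\ref{convexorderR} then yields $\mu_k\leqc\mu_{k+1}$, which is exactly the peacock property.

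No step here is a real obstacle; the only mildly nontrivial observation is that \eqref{eq:Phi 0} collapses to a sum of single-point differences $R_n(x_n)-R_{n+1}(x_n)$, after which the proof is essentially bookkeeping with the limiting behavior of call functions at $+\infty$.
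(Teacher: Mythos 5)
Your proof is correct and follows essentially the same route as the paper: both directions come down to the observation that $\Phi_N$ reduces to pointwise differences $R_n(x_n)-R_{n+1}(x_n)$ of consecutive call functions, with the converse obtained by sending the unused arguments to $+\infty$ and using $\lim_{x\to\infty}R_n(x)=0$. Your exact telescoping identity is a slightly cleaner packaging of the inequality chain the paper uses for the forward implication, but the underlying argument is the same.
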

\begin{proof}
  According to Proposition~\ref{RFprop}~\ref{convexorderR}, we need to check whether the sequence
  of call functions increases. Let $n\in\setN$ be arbitrary. If we set the $n$-th component
  of $(x_1,\dots,x_{n+1})$ to an arbitrary $x\in\setR$ and let all others tend to $\infty$, we get
  \[
    \Phi_{n+1}(\infty,\dots,\infty,x,\infty) = R_n(x) - R_{n+1}(x).
  \]
  The sequence of call functions thus increases, if $\Phi$ is always non-positive. Conversely,
  assume that $(R_n)_{n\in\setN}$ increases. Then, for $N\in\setN$ and $x_1,\dots,x_N\in\setR$,
  \begin{align*}
     \Phi_N(x_1,\dots,x_N) &\leq  R_1(x_1) + \sum_{n=2}^N R_{n+1}(x_n)-
       \sum_{n=2}^N R_n(x_{n-1}) - R_{N+1}(x_N) \\
     &= R_1(x_1) + \sum_{n=3}^{N+1} R_{n}(x_{n-1})-
       \sum_{n=2}^N R_n(x_{n-1}) - R_{N+1}(x_N) \\
     &= R_1(x_1) - R_2(x_1) \leq 0.
  \end{align*}
\end{proof}
We now extend the definition of~$\Phi_N$ for $x_1,\dots,x_N\in\setR$, 
 $m\in\setR$, and $\epsilon>0$ as follows, using the notation from Proposition~\ref{LSwinf}:
\begin{multline}\label{eq:Phi}
  \Phi_N(x_1,\dots,x_N;m,\epsilon)=
  R_1^{\min}(x_1; m, \epsilon) \\
  + \sum_{n=2}^{N}\Big(R_n(x_n+\epsilon \sigma_n )-R_n(x_{n-1}+\epsilon \sigma_n )\Big)
   - R_{N+1}^{\max}(x_N; m, \epsilon).
\end{multline}
Here, $R_1^{\min}$ is the call function of $S(\mu_1; m, \epsilon)$, $R_{N+1}^{\max}$ is the call function of $T(\mu_{N+1}; m ,\epsilon)$, and
\begin{equation}\label{eq:sigma}
  \sigma_n= \sgn(x_{n-1}-x_n) = \begin{cases}
    1, &\text{if} \ x_{n-1}> x_n, \\
		0, &\text{if} \ x_{n-1} = x_n, \\
    -1, &\text{if} \ x_{n-1}< x_n, \\
  \end{cases}
\end{equation}
depends on~$x_{n-1}$ and~$x_n$.
%
Clearly, for $\epsilon=0$ and $\mathbb{E}\mu_1 = \mathbb{E}\mu_2 = \dots = m$, we recover~\eqref{eq:Phi 0}:
\begin{equation}\label{eq:Phi consistent}
   \Phi_N(x_1,\dots,x_N;m,0) = \Phi_N(x_1,\dots,x_N), \quad N \in \setN,\
   x_1,\dots,x_N \in \setR.
\end{equation}
The following theorem gives an equivalent condition for the existence of a peacock
within $W^\infty$-distance $\epsilon$ of a given sequence of measures, thus solving
Problem~\ref{pr:main} for the infinity Wasserstein distance, and is our first
main result.
Note that the functions~$\Phi_N$ defined in~\eqref{eq:Phi} have explicit expressions in terms of the
given call functions, as $R^{\min}$ and $R^{\max}$ are explicitly given
by~\eqref{eq:Rmin} and~\eqref{eq:Rmax}. The existence criterion we obtain is thus rather explicit;
the existence \emph{proof} is not constructive, though, as mentioned in the introduction.
%
\begin{thm} \label{strassenwinf}
   Let $\epsilon>0$ and $(\mu_n)_{n \in \setN}$ be a sequence in $\setM$ such that 
   \[
     I:= \bigcap_{n \in \setN}[\mathbb{E}\mu_n-\epsilon, \mathbb{E}\mu_n+\epsilon] 
   \]
   is not empty. Denote by $(R_n)_{n \in \setN}$ the corresponding call functions, and
   define~$\Phi_N$ by~\eqref{eq:Phi}. Then there exists a peacock $(\nu_n)_{n \in \setN}$ 
   such that 
   \begin{equation} \label{eq:Winf}
     W^\infty(\mu_n, \nu_n) \leq \epsilon, \quad \mbox{for all} \ n \in \setN,
   \end{equation}
   if and only if for some $m \in I$
   and for all $N \in \setN$,  $x_1, \dots, x_N \in \setR$, 
   we have
   \begin{equation} \label{minklmaxwinf}
     \Phi_N(x_1,\dots,x_N;m,\epsilon)\leq0.
   \end{equation}
   In this case it is possible to choose $\mathbb{E}\nu_1=\mathbb{E}\nu_2=\dots=m$. 
\end{thm}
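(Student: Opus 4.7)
The proof has two directions. For \textbf{necessity}, suppose $(\nu_n)$ is a peacock with $W^\infty(\mu_n,\nu_n)\leq\epsilon$, and set $m:=\mathbb{E}\nu_1$. Since peacocks have constant mean and~\eqref{eq:R est} forces $|\mathbb{E}\mu_n-m|\leq\epsilon$, we have $m\in I$. Each $\nu_n$ lies in $C_n := B^\infty(\mu_n,\epsilon)\cap\setM_m$, so Proposition~\ref{LSwinf} combined with Proposition~\ref{RFprop}\ref{convexorderR} yields the sandwich $R_n^{\min}\leq R_{\nu_n}\leq R_n^{\max}$. I bound $\Phi_N(x_1,\dots,x_N;m,\epsilon)$ termwise by the telescoping sum
\[
R_{\nu_1}(x_1)+\sum_{n=2}^N\bigl(R_{\nu_n}(x_n)-R_{\nu_n}(x_{n-1})\bigr)-R_{\nu_{N+1}}(x_N)=\sum_{n=1}^N\bigl(R_{\nu_n}(x_n)-R_{\nu_{n+1}}(x_n)\bigr),
\]
which is $\leq 0$ by Proposition~\ref{prop:consistent}. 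The extreme terms are controlled by the sandwich. For the middle terms, the required inequality
\[
R_{\mu_n}(x_n+\epsilon\sigma_n)-R_{\mu_n}(x_{n-1}+\epsilon\sigma_n)\leq R_{\nu_n}(x_n)-R_{\nu_n}(x_{n-1})
\]
rearranges to $g_n(x_n)\leq g_n(x_{n-1})$ for $g_n(x):=R_{\mu_n}(x+\epsilon\sigma_n)-R_{\nu_n}(x)$. The derivative characterization~\eqref{eq:W repr} gives $g_n'\geq 0$ when $\sigma_n=+1$ (so $g_n$ is nondecreasing, and $x_{n-1}>x_n$) and $g_n'\leq 0$ when $\sigma_n=-1$ (so $g_n$ is nonincreasing, and $x_{n-1}<x_n$); the case $\sigma_n=0$ is trivial.

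For \textbf{sufficiency}, fix $m\in I$ satisfying~\eqref{minklmaxwinf} for all $N$ and $x_1,\dots,x_N$, and set $C_n$ as above. As a first reduction, by fixing $x_k=\dots=x_{j-1}=x$ in $\Phi_{j-1}(\cdot;m,\epsilon)\leq 0$ and sending the free coordinates $x_1,\dots,x_{k-1}$ either to $+\infty$ (forcing $\sigma_n=+1$ for the relevant indices) or to $-\infty$ (forcing $\sigma_n=-1$), the asymptotics from Proposition~\ref{RFprop}\ref{limitcall} combined with the two defining branches of $R_k^{\min}$ in~\eqref{eq:Rmin} produce the pairwise compatibility $R_k^{\min}\leq R_j^{\max}$ for every $k<j$. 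To assemble the full peacock, I invoke Zorn's lemma on the poset of finite partial solutions $(\nu_1,\dots,\nu_k)\in\prod_{n=1}^k C_n$ with $\nu_n\leqc\nu_{n+1}$ and for which the \emph{residual} admissibility condition -- a shifted version of~\eqref{minklmaxwinf} for the tail $(\mu_{k+1},\mu_{k+2},\dots)$ in which $R_{k+1}^{\min}$ is replaced by the call function of $\nu_k$ -- still holds, ordered by extension. Chain upper bounds are obtained from pointwise precompactness of call functions (convex, decreasing, equi-Lipschitz with constant~$1$). A maximal element must be of infinite length: given a finite maximal $(\nu_1,\dots,\nu_k)$, the residual condition together with pairwise compatibility and the sublattice closure of Lemma~\ref{mininkugel} produces an extension $\nu_{k+1}\in C_{k+1}$ with $\nu_{k+1}\geqc\nu_k$ preserving the residual admissibility, contradicting maximality. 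Strassen's theorem for $\epsilon=0$ enters at the base of the extension step, certifying that the limiting residual condition is indeed inherited along chains.

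The main obstacle is this sufficiency construction. The lattice structure of Lemma~\ref{mininkugel} is a single-ball statement, while the existence of a peacock entangles countably many balls simultaneously. A naive iterative scheme such as $\nu_n:=\nu_{n-1}\vee S(\mu_n;m,\epsilon)$ fails, because $C_n$ is not in general order-convex inside $(\setM_m,\leqc)$, so the sup (taken in $\setM_m$) may exit $C_n$. Tracking the shifted residual condition along the Zorn chain, and verifying at each step that further extension remains possible, is the technical heart of the argument and is responsible for the non-constructive character of the existence proof noted in the introduction.
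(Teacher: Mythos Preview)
Your necessity argument is correct and matches the paper's induction, just packaged as a direct termwise estimate via the monotonicity of $g_n(x)=R_{\mu_n}(x+\epsilon\sigma_n)-R_{\nu_n}(x)$.

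The sufficiency argument has a genuine gap. The whole difficulty lies in the extension step: given a partial peacock $(\nu_1,\dots,\nu_k)$ satisfying your residual condition, you must produce $\nu_{k+1}\in C_{k+1}$ with $\nu_k\leqc\nu_{k+1}$ \emph{for which the residual condition is again satisfied}. You assert that ``pairwise compatibility and the sublattice closure of Lemma~\ref{mininkugel}'' deliver this, but they do not. Lemma~\ref{mininkugel} only says that $C_{k+1}$ is closed under $\wedge$ and $\vee$; it says nothing about the structure of the order-filter $\{\theta\in C_{k+1}:\nu_k\leqc\theta\}$, and in particular gives no control on the call function of any element you might pick from it. Concretely: the residual condition at step $k$ bounds expressions of the form $R_{\nu_k}(x_k)+R_{\mu_{k+1}}(x_{k+1}+\epsilon\sigma)-R_{\mu_{k+1}}(x_k+\epsilon\sigma)+\cdots$, whereas the residual condition at step $k+1$ requires a bound on $R_{\nu_{k+1}}(x_{k+1})+\cdots$. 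To pass from the former to the latter you need to know that $R_{\nu_{k+1}}(x_{k+1})$ can be \emph{written} as $R_{\nu_k}(y)+R_{\mu_{k+1}}(x_{k+1}+\epsilon\sigma)-R_{\mu_{k+1}}(y+\epsilon\sigma)$ for some $y$ depending on $x_{k+1}$. That is not a lattice statement; it is a pointwise structural property of a very specific choice of $\nu_{k+1}$.

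The paper obtains this as follows. Theorem~\ref{minconstruct} (which is where Zorn's lemma is actually used, inside a single ball) shows that $A_{\mu_{k+1}}^{\nu_k}=\{\theta\in C_{k+1}:\nu_k\leqc\theta\}$ has a \emph{least} element $S_{\nu_k}(\mu_{k+1})$, and that its call function $R^*$ satisfies the variational identity~\eqref{vartype}. Corollary~\ref{xycor} then extracts from~\eqref{vartype} exactly the representation above: for every $x$ there is $y$ with $R^*(x)=R_{\nu_k}(y)-R_{\mu_{k+1}}(y+\epsilon\sigma)+R_{\mu_{k+1}}(x+\epsilon\sigma)$. Setting $\nu_{k+1}:=S_{\nu_k}(\mu_{k+1})$ and unfolding this representation $N$ times gives $P_{N+1}(x)$ the shape of the left-hand side of~\eqref{minklmaxwinf}, so the hypothesis directly yields $P_{N+1}\leq R_n^{\max}$ for all $n\geq N+2$. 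Once this one-step extension is established, the construction is a straightforward induction; no Zorn's lemma on the poset of partial peacocks is needed, and your invocation of it is a red herring. The non-constructive character mentioned in the introduction refers to the Zorn argument inside Theorem~\ref{minconstruct}, not to the assembly of the sequence.
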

%
The proof of Theorem~\ref{strassenwinf} is given in Section~\ref{se:pr ram},
building on Theorem~\ref{minconstruct} and Corollary~\ref{xycor} below. 
In view of our intended application (see~\cite{GeGu16}), we now give an alternative
formulation of Theorem~\ref{strassenwinf}, which avoids the existential
quantification ``for some $m\in I$''.
Note that the expressions inside the suprema
in \eqref{eq:minklmax1}-\eqref{eq:minklmax3} are similar to~$\Phi_N$, defined in~\eqref{eq:Phi}.
Corollary~\ref{strassenwinf2} is proved towards the end of Section~\ref{se:pr ram}.
\begin{cor} \label{strassenwinf2}
	Let $\epsilon>0$ and $(\mu_n)_{n \in \setN}$ be a sequence in $\setM$ such that 
	\[
	I:= \bigcap_{n \in \setN}[\mathbb{E}\mu_n-\epsilon, \mathbb{E}\mu_n+\epsilon] 
	\]
	is not empty. Denote by $(R_n)_{n \in \setN}$ the corresponding call functions.
	Then there exists a peacock $(\nu_n)_{n \in \setN}$ 
	such that~\eqref{eq:Winf} holds 
	if and only if 
	\begin{align} \label{eq:minklmax1}
	\sup_{\substack{N_1 \in \mathbb{N} \\ x_1, \dots, x_{N_1} \in \setR}} & \biggl\{ 
	R_1(x_1 + \epsilon) + \sum_{n=2}^{N_1} R_n(x_n + \epsilon \sigma_n) - R_{n}(x_{n-1} + \epsilon \sigma_n) - R_{N_1+1}(x_N - \epsilon) \biggr\} \leq 0, \\
	\label{eq:minklmax2}
	\sup_{\substack{N_1 \in \mathbb{N} \\ x_1, \dots, x_{N_1} \in \setR}} & \biggl\{ 
	R_1(x_1 - \epsilon) + \sum_{n=2}^{N_1} R_n(x_n + \epsilon \sigma_n) - R_{n}(x_{n-1} + \epsilon \sigma_n) - R_{N_1+1}(x_N + \epsilon) + \mathbb{E}\mu_{N_1+1} -\mathbb{E}\mu_{1} \biggr\} \leq 2 \epsilon, \\
	\nonumber \sup_{\substack{N_1 \in \mathbb{N} \\ x_1, \dots, x_{N_1} \in \setR}} & \biggl\{ 
	 R_1(x_1 + \epsilon) + \sum_{n=2}^{N_1} R_n(x_n + \epsilon \sigma_n) - R_{n}(x_{n-1} + \epsilon \sigma_n) - R_{N_1+1}(x_{N_1} + \epsilon) + \mathbb{E}\mu_{N_1+1} \biggr\} +  \\
	 \label{eq:minklmax3}
	 \sup_{\substack{N_2 \in \mathbb{N} \\ y_1, \dots, y_{N_2} \in \setR}} & \biggl\{ 
	 R_1(y_1 - \epsilon) + \sum_{n=2}^{N_2} R_n(y_n + \epsilon \sigma_n) - R_{n}(y_{n-1} + \epsilon \sigma_n) - R_{N_2+1}(y_{N_2} - \epsilon) - \mathbb{E}\mu_{1} \biggr\} \leq 2\epsilon.
	\end{align}
\end{cor}

For $\epsilon=0$, condition~\eqref{minklmaxwinf} is equivalent to the sequence of
call functions $(R_n)$ being increasing, see Proposition~\ref{prop:consistent}.
For $\epsilon>0$, analogously to the proof of Proposition~\ref{prop:consistent}, we see
that~\eqref{minklmaxwinf} implies
\begin{equation} \label{eq: Rn}
  R_n(x+\epsilon) \leq R_{n+1}(x-\epsilon), \quad x\in\setR, n\in\setN.
\end{equation}
It is clear that~\eqref{eq: Rn} is necessary for the existence of the peacock
$(\nu_n)_{n \in \setN}$, since, by~\eqref{eq:R est} and Proposition~\ref{RFprop}~\ref{convexorderR},
\[
  R_n(x+\epsilon) \leq R_{\nu_n}(x) \leq R_{\nu_{n+1}}(x) \leq R_{n+1}(x-\epsilon), 
    \quad x\in\setR, n\in\setN.
\]
On the other hand, it is easy to show that~\eqref{eq: Rn} is not sufficient for~\eqref{minklmaxwinf}:
\begin{exa}\label{ex:not suff}
   Fix $m>1$ and $\epsilon=1$ and define two measures
   \[
     \mu_1=  \frac 2{m+1}\delta_0+ \frac {m-1}{m+1}\delta_{m+1}, \quad \mu_2=\delta_{m+1},
   \]
   where $\delta$ denotes the Dirac delta. It is simple to check that~\eqref{eq: Rn} is satisfied, i.e.
   \[
   R_{\mu_1}(x+\epsilon) \leq R_{\mu_2}(x-\epsilon), \quad x \in \setR.
   \]
   Now assume that we want to construct a peacock $(\nu_n)_{n=1,2}$ such that $W^\infty(\mu_n,\nu_n)\leq 1$.
   Then the only possible mean for this peacock is $m$, since $\mathbb{E}\mu_1=m-1$ and $\mathbb{E}\mu_2=m+1$ (see the remark before Proposition~\ref{LSwinf}).
   Therefore the peacock has to satisfy $\nu_n \in B^\infty(\mu_n,1)\cap\setM_m$,
   $n=1,2$, and the only possible choice is
   \[
     \nu_1=  \frac 2{m+1}\delta_1+ \frac {m-1}{m+1}\delta_{m+2}, \quad \nu_2=\delta_{m}.
   \]
   But since  $R_{\nu_1}(x) > R_{\nu_2}(x)$ for $x \in (1,m+2)$, $(\nu_n)_{n=1,2}$ is not a peacock; see Figure~\ref{fig:counterex}. 
\begin{figure}[ht]
   \begin{center}
      \includegraphics[width=10cm]{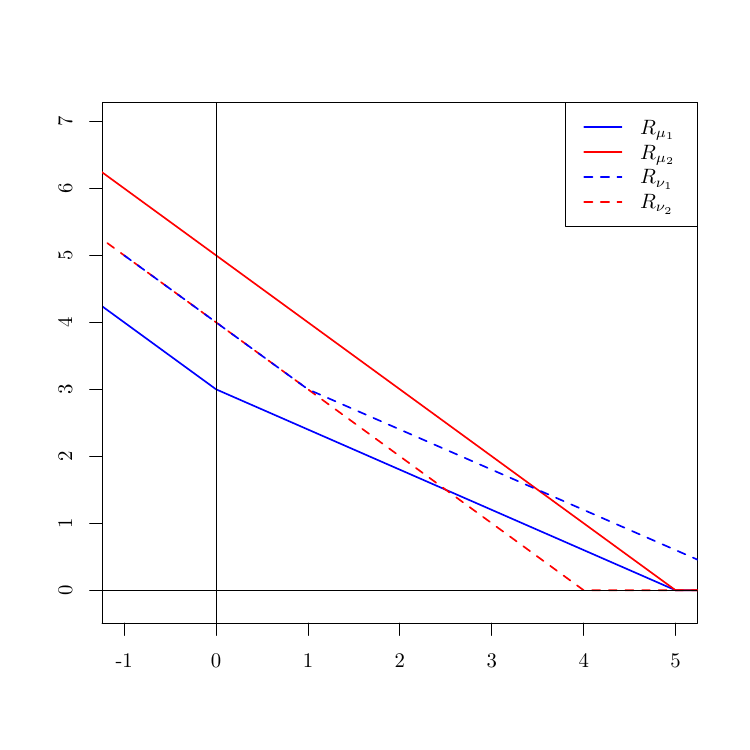}
   \caption{The call functions of $\mu_1$ (lower solid curve) and $\mu_2$ (upper solid curve)
   from Example~\ref{ex:not suff}, for $m=4$ and $\epsilon=1$. 
						The call function of $\nu_1$ is the call function of $\mu_1$ shifted to the right by one. 
						Similarly, shifting the call function of $\mu_2$ by one to the left yields the call function of $\nu_2$. }
     \label{fig:counterex}
   \end{center}
\end{figure}
\end{exa}

If the sequence $(\mu_n)_{n=1,2}$ has just two elements, then it suffices to require~\eqref{minklmaxwinf}
only for $N=1$. It then simply states that there is an $m\in I$ such that
$R_1^{\min}(x;m,\epsilon) \leq R_2^{\max}(x;m,\epsilon)$ for all~$x$, which is clearly
necessary and sufficient for the existence of $(\nu_n)_{n=1,2}$.

\begin{exa}
  Unsurprisingly, the peacock from Theorem~\ref{strassenwinf} is in general not unique:
  Let $\epsilon>0$ and consider the constant sequences $R_n(x)=(-x)^+$, $n\in\setN$, and
  \[
    P_n(x,c) =
    \begin{cases}
      -x, & x\leq -\epsilon, \\
      \epsilon - \frac{\epsilon(x+\epsilon)}{c+\epsilon},
        & -\epsilon \leq x \leq c, \\
      0, & x\geq c.
    \end{cases}
  \]
Then, for any $c\in[0,\epsilon]$, it is easy to verify that
the sequence of call functions $P_n(\cdot,c)$ defines a peacock satisfying~\eqref{eq:Winf}.
\end{exa}

\section{Proof and ramifications of Theorem~\ref{strassenwinf}}\label{se:pr ram}

The following theorem furnishes the main step for the induction proof of Theorem~\ref{strassenwinf},
given at the end of the present section. In each induction step,
the next element of the desired peacock should be contained in a certain ball, it should be
larger in convex order than the previous element ($\nu$ in Theorem~\ref{minconstruct}),
and it should be as small as possible
in order not to hamper the existence of the subsequent elements. This leads us to
search for a least element of the set~$A_\mu^\nu$ defined in~\eqref{eq:A}. The conditions defining this least
element translate into inequalities on the corresponding call function.
Part~\ref{ineq} of Theorem~\ref{minconstruct} states that, at each point of the real line,
at least one of the latter conditions becomes an equality.
\begin{thm} \label{minconstruct}
   Let $\mu,\nu$ be two measures in $\setM$ such that the set
   \begin{equation} \label{eq:A}
     A_\mu^\nu:= \Bigl\{ \theta \in B^{\infty}(\mu; \epsilon): \ \nu \leqc \theta \Bigr\}
   \end{equation}
   is not empty.
   \begin{enumerate}[label=(\roman*)]
    \item\label{it:min}
      The set $A_\mu^\nu$ contains a least element $S_{\nu}(\mu)$ with respect to $\leqc$, i.e.\ for every $\theta \in A_\mu^\nu$ we have
      \[
        \nu \leqc S_{\nu}(\mu) \leqc \theta.
      \] 
      Equivalently, if 
      \begin{equation*} \label{callcond}
        R_\nu(x) \leq R_{T(\mu;\mathbb{E}\nu,\epsilon)}(x), \quad x \in \setR,
      \end{equation*}
      where~$T(\mu)$ was defined in~\eqref{eq:Rmax}, 
      there exists a pointwise smallest call function $R^*$ which is greater than $R_\nu$ and 
      satisfies \\ $(R^*)'(x) \in [R_\mu'(x-\epsilon),R_\mu'(x+\epsilon)]$ for all $x \in \setR$.
    \item\label{ineq} The call function $R^*$ is a solution of the following variational type inequality:
       \begin{equation}\label{vartype}
        \min\Bigr\{R^*(x)-R_\nu(x), (R^*)'(x)-R'_\mu(x-\epsilon),R'_\mu(x+\epsilon)-(R^*)'(x) \Bigl\}=0, \quad x \in \setR.
      \end{equation}
   \end{enumerate}
\end{thm}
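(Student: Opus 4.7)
My approach splits the claim into part~(i) (existence of $S_\nu(\mu)$ and its equivalent call-function characterization) and part~(ii) (the variational inequality).

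For part~(i), I first observe that $A_\mu^\nu\subseteq B^\infty(\mu,\epsilon)\cap\setM_{\E\nu}$, since $\nu\leqc\theta$ forces $\E\theta=\E\nu$. Combining this with Lemma~\ref{mininkugel} and the fact that $\nu$ is a common lower bound, $A_\mu^\nu$ is closed under the binary meet $\wedge$ of the ambient sublattice, hence downward-directed under $\leqc$. I then define
\[
  R^*(x):=\inf_{\theta\in A_\mu^\nu}R_\theta(x),\qquad x\in\setR,
\]
and show it is the call function of a measure $S_\nu(\mu)\in A_\mu^\nu$, which is thereby the minimum. Convexity of $R^*$ follows from the standard swap argument for the pointwise infimum over a downward-directed family of convex functions; the call-function asymptotics are obtained by sandwiching $R_\nu\leq R^*\leq R_\theta$ for any $\theta\in A_\mu^\nu$. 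For the $W^\infty$-bound I integrate the derivative constraints on each $R_\theta$ to obtain, for $y>x$,
\[
  R_\mu(y+\epsilon)-R_\mu(x+\epsilon)\leq R_\theta(y)-R_\theta(x)\leq R_\mu(y-\epsilon)-R_\mu(x-\epsilon),
\]
take the infimum over $\theta$ (valid by directedness) and let $y\downarrow x$ to recover $R_\mu'(x-\epsilon)\leq R^{*\prime}(x)\leq R_\mu'(x+\epsilon)$. The equivalent call-function formulation is then immediate, since $A_\mu^\nu\neq\emptyset$ iff $T(\mu;\E\nu,\epsilon)\in A_\mu^\nu$ iff $R_\nu\leq R_{T(\mu)}(\,\cdot\,;\E\nu,\epsilon)$ by Proposition~\ref{LSwinf}.

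For part~(ii), each expression in~\eqref{vartype} is non-negative because $S_\nu(\mu)\in A_\mu^\nu$; I show that at every $x$ at least one vanishes by contradiction. Suppose at some $x_0$,
\[
  R^*(x_0)>R_\nu(x_0),\qquad R_\mu'(x_0-\epsilon)<R^{*\prime}(x_0)<R_\mu'(x_0+\epsilon).
\]
By right-continuity of $R^{*\prime}$ and $R_\mu'(\cdot\pm\epsilon)$, together with the continuity of $R^*-R_\nu$, all three gaps remain uniformly positive on a compact interval $[a,b]\ni x_0$. I construct a strictly smaller call function $\tilde R\in A_\mu^\nu$, contradicting minimality, by setting $\tilde R=R^*$ outside $[a,b]$ and, on $[a,b]$, replacing $R^{*\prime}$ by a piecewise-constant function with two slopes $s_0<R^{*\prime}(x_0)<s_1$ joining at an interior point $c_1$ chosen so that $s_0(c_1-a)+s_1(b-c_1)=R^*(b)-R^*(a)$ for value matching at $b$. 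The uniform slack allows $s_0,s_1$ to lie inside $[R_\mu'(x-\epsilon),R_\mu'(x+\epsilon)]$ for every $x\in[a,b]$, and combining the uniform gap $R^*-R_\nu>0$ with small enough $b-a$ keeps $\tilde R\geq R_\nu$.

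The main obstacle is the convexity constraint at the endpoints, requiring $s_0\geq R^{*\prime}(a-)$ and $s_1\leq R^{*\prime}(b+)$; at a smooth point of $R^{*\prime}$ these forcibly pin $s_0=R^{*\prime}(a)$ and $s_1=R^{*\prime}(b)$. I handle this by choosing $[a,b]$ over which $R^{*\prime}$ genuinely varies, so that the pinned values satisfy $R^{*\prime}(a)<R^{*\prime}(x_0)<R^{*\prime}(b)$. If $R^*$ is affine on a neighborhood of $x_0$, the maximal affine interval $(c,d)\ni x_0$ must be bounded: the call-function asymptotics $R^{*\prime}(-\infty)=-1$ and $R^{*\prime}(+\infty)=0$ would force any unbounded affine interval to have slope $-1$ or $0$, and such a slope would saturate one of the derivative bounds at $x_0$ (because $R_\mu'$ shares the same asymptotics), contradicting the slackness assumption. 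Enlarging $[a,b]$ beyond $[c,d]$ then gives $R^{*\prime}(a)<R^{*\prime}(x_0)<R^{*\prime}(b)$, and the construction produces $\tilde R$ whose associated measure is strictly smaller than $S_\nu(\mu)$ in convex order, the desired contradiction establishing~\eqref{vartype}.
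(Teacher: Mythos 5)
Your part~(i) is essentially correct and in fact streamlines the paper's argument: instead of first invoking Zorn's lemma to produce a minimal element, you work directly with the pointwise infimum $R^*$ over the downward-directed family (directedness coming from Lemma~\ref{mininkugel}), which is legitimate. One slip: your displayed sandwich is written backwards --- integrating $R_\mu'(z-\epsilon)\leq R_\theta'(z)\leq R_\mu'(z+\epsilon)$ over $[x,y]$ gives $R_\mu(y-\epsilon)-R_\mu(x-\epsilon)\leq R_\theta(y)-R_\theta(x)\leq R_\mu(y+\epsilon)-R_\mu(x+\epsilon)$ --- but the limiting conclusion $R_\mu'(x-\epsilon)\leq (R^*)'(x)\leq R_\mu'(x+\epsilon)$ is the right one. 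Your Case-1-type construction (two tangents, i.e.\ a two-slope replacement on a \emph{short} interval) also matches the paper's.

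The genuine gap is in the affine case of part~(ii). Once you are forced to enlarge $[a,b]$ beyond the maximal affine interval $[c,d]$ of slope $k=(R^*)'(x_0)$, the length $b-a$ is bounded below by $d-c$ and can no longer be taken small, yet your replacement uses the \emph{constant} slopes $s_0<k<s_1$ across the entire stretches $[a,c_1]$ and $[c_1,b]$. This violates the constraint $\tilde R'(x)\in[R_\mu'(x-\epsilon),R_\mu'(x+\epsilon)]$: from $R_\mu'(x-\epsilon)\leq k\leq R_\mu'(x+\epsilon)$ on $[c,d)$ one gets $R_\mu'\equiv k$ on $[c+\epsilon,d-\epsilon)$, so for every $x$ with $[x-\epsilon,x+\epsilon]$ inside that range the admissible interval degenerates to $\{k\}$ and the slope of $\tilde R$ is \emph{pinned} to $k$ there; more generally $s_0\geq R_\mu'(x-\epsilon)$ must hold for all $x\in[a,c_1]$, and $R_\mu'(c_1-\epsilon)$ can already equal $k>s_0$ when $c_1\geq c+2\epsilon$. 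For the same reason your control of $\tilde R\geq R_\nu$ via ``small enough $b-a$'' is unavailable here, and near the new endpoints $a<c$, $b>d$ the gap $R^*-R_\nu$ need not be bounded away from zero. The paper's Case~2 is built precisely to avoid both problems: the middle of the affine piece is translated \emph{parallel to itself} (keeping slope $k$, hence automatically feasible wherever $R^*$ was), the slope is altered only on two short intervals $[x_1,x_1+h_1]$ and $[x_2-h_2,x_2]$ at the ends where right-continuity of $R_\mu'$ gives room, and a separate convexity argument (a convex $R_\nu$ lying below a line and touching it at an interior point of the affine interval would have to coincide with it, contradicting $R^*>R_\nu$ on $(a,b)$) establishes $R^*>R_\nu$ on the whole open interval $(a_1,b_1)$ so that the downward perturbation stays above $R_\nu$. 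Without some version of this three-piece construction and the $R^*>R_\nu$ propagation, your contradiction in the affine case does not go through.
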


\begin{proof}
   The equivalence in~\ref{it:min} follows from Proposition~\ref{RFprop}~\ref{convexorderR};
   note that the existence of $T(\mu;\mathbb E \nu,\epsilon)$ follows from $A_\mu^\nu\neq \emptyset$.
   We now argue that $S_{\nu}(\mu)$ exists. 
   An easy application of Zorn's lemma shows that there exist minimal elements in $A_\mu^\nu$. 
   If $\theta_1$ and $\theta_2$ are two minimal elements of  $A_\mu^\nu$ then, 
   according to Lemma~\ref{mininkugel}, the measure $\theta_1 \wedge \theta_2$ lies in $B^\infty(\nu, \epsilon) \cap \setM_{\mathbb{E}\nu}$.
   Moreover, the convex function $R_\nu$ nowhere exceeds $R_{\theta_1}$ and $R_{\theta_2}$, and hence we have  
   $R_\nu \leq \mathrm{conv}(R_{\theta_1} \wedge R_{\theta_2})=R_{\theta_1 \wedge \theta_2}.$ 
   Therefore $\theta_1 \wedge \theta_2$ lies in $A_\mu^\nu$. Now clearly $\theta_1 \wedge \theta_2 \leqc \theta_1$ and $\theta_1 \wedge \theta_2 \leqc \theta_2$, 
   and from the minimality we can conclude that $\theta_1 \wedge \theta_2=\theta_1=\theta_2$. 

   Now let $\theta^*$ be the unique minimal element and let $\theta \in A_{\mu}^{\nu}$ be arbitrary.
   Exactly as before we can show that $\theta^* \wedge \theta$ lies in $ A_{\mu}^{\nu}$. 
   Moreover $\theta^*=\theta^* \wedge \theta \leqc \theta$ and therefore $\theta^*$ is the least element of $A_{\mu}^{\nu}$.

   It remains to show~\ref{ineq}. We set 
   \begin{equation}\label{infrstar}
     R^*(x)= \inf\big\{R_\theta(x): \ \theta \in  A_\mu^\nu \big\}.
   \end{equation}
Clearly $R^*$ is a decreasing function with $\lim_{x \rightarrow \infty}R^*(x)=0$ and $\lim_{x \rightarrow -\infty}R^*(x)+x=\mathbb{E}\nu$. We will show that $R^*$ is convex, which is equivalent to the convexity of the epigraph $\mathcal{E}$ of $R^*$. Pick two points $(x_1,y_1), (x_2,y_2) \in \mathcal{E}$. Then there exist measures $\theta_1, \theta_2 \in A_\mu^\nu$ such that $R_{\theta_1}(x_1) \leq y_1$ and $R_{\theta_2}(x_2) \leq y_2$. Using Lemma \ref{mininkugel} once more, we get that  $\theta:= \theta_1 \wedge \theta_2 \in A_\mu^\nu$ and $R_\theta(x_i) \leq y_i, i=1,2$. Therefore, the whole segment with endpoints $(x_1,y_1)$ and $(x_2,y_2)$ lies in the epigraph of $R_\theta$ and hence in $\mathcal{E}$. This implies that $R^*$ is a call function, and as we already know that $A_\mu^\nu$ has a least element $S_\nu(\mu)$,
the measure associated to~$R^*$ has to be $S_\nu(\mu)$. Also, we can therefore conclude that the infimum in \eqref{infrstar} is attained for all $x$.

Now assume that~\eqref{vartype} is wrong. Since all functions appearing in~\eqref{vartype}
are right-continuous,
there must then exist an open interval $(a,b)$ where \eqref{vartype} does not hold, i.e. $R^*(x)>R_\nu(x)$ and $(R^*)'(x) \in (R'_\mu(x-\epsilon), R'_\mu(x+\epsilon) )$ for all $x \in (a,b)$.

\textbf{Case 1:} There exists an open interval $I \subseteq (a,b)$ where $R^*$ is strictly convex.
Then we can pick $x_1 \in I$ and $h_1>0$ such that $x_1+h_1 \in I$ and such that the tangent 
\[
  P_1(x):= R^*(x_1)+(R^*)'(x_1)(x-x_1), \quad x\in [x_1,x_1+h_1]
\]
satisfies $R_\nu(x) < P_1(x)< R^*(x)$ for $x \in (x_1, x_1+h_1]$. Also, since $(R^*)'(x_1)>R'_\mu(x_1-\epsilon)$ and since $R'_\mu$ is right-continuous, we can choose $h_1$ small enough to guarantee $(R^*)'(x_1) \geq R'_\mu(x_1+h_1-\epsilon)$. 
Next pick $x_2 \in (x_1,x_1+h_1)$, such that $R_\mu'(\cdot+\epsilon)$ is continuous at $x_2$ and set 
$$
P_2(x):= R^*(x_2)+(R^*)'(x_2)(x-x_2), \quad x\in [x_2-h_2,x_2].
$$
We can choose $h_2$ small enough to ensure that $R_\nu(x) < P_2(x)< R^*(x)$ and $(R^*)'(x_2) \leq R'_\mu(x_2-h_2+\epsilon)$. Also, if $x_1$ and $x_2$ are close enough together, then there is an intersection of $P_1$ and $P_2$ in $(x_1,x_2)$. Now the function
$$
\widetilde{R}(x):= \begin{cases} 
P_1(x)\vee P_2(x), & x\in [x_1,x_2],  \\
R^*(x), & \mbox{otherwise},
\end{cases}
$$
is a call function which is strictly smaller than $R^*$ and satisfies  $\widetilde{R}'(x) \in [R_\mu'(x-\epsilon),R_\mu'(x+\epsilon)]$ for all $x \in \setR$. This is a contradiction to \eqref{infrstar}.
See Figure~\ref{fig:proof36a} for an illustration.
\begin{figure}[ht]
   \begin{center}
      \includegraphics[width=10cm]{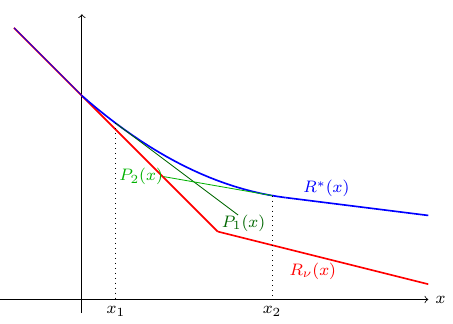}
   \caption{\label{fig:proof36a}
     Case~1 of the proof of Theorem~\ref{minconstruct}. If~$R^*$ is strictly convex, then we
     can deform it using two appropriate tangents, contradicting minimality of the associated measure. 
   }
   \end{center}
\end{figure}

\textbf{Case 2:} If there is no open interval in $(a,b)$ where $R^*$ is strictly convex, then $R^*$ has to be affine on some closed interval $I \subseteq (a,b)$ (see p.\ 7 in \cite{Ro73}). Therefore, there exist $k,d$ in $\setR$ such that 
$$
R^*(x)=kx+d, \quad x \in I. 
$$ 
By Proposition~\ref{RFprop}~\ref{limitcall}, the slope $k$ has to lie in the open interval $(-1,0)$, since $R^*$ is greater than $R_\nu$ on $I$. We set 
\begin{align*}
  a_1 &:= \sup\bigl\{x \in \setR: (R^*)'(x) < k\bigr\}>-\infty, \\
  b_1 &:= \inf\bigl\{x \in \setR: (R^*)'(x) >k\bigr\}<\infty;
\end{align*}
the finiteness of these quantities follows from Proposition~\ref{RFprop}~\ref{limitcall}.
{}From the convexity of $R_\nu$ and the fact that $R_\nu \leq R^*$, we get that $R^*(x)>R_\nu(x)$ for all $x \in (a_1, b_1)$, as well as  $(R^*)'(x)>R'_\mu(x-\epsilon)$ for all $x \in (a_1,b)$  and  $(R^*)'(x)<R'_\mu(x+\epsilon)$ for all $x \in (a,b_1)$. We now define lines~$P_1$ and~$P_2$, with analogous roles
as in Case~1. Their definitions depend on the behavior of~$(R^*)'$ at~$a_1$ and~$b_1$. 

If $(R^*)'(a_1-)<k$, then we set $x_1=a_1$ and $P_1(x)=R^*(x_1)+k_1(x-x_1)$ for $x\geq x_1$,
with an arbitrary $k_1 \in ((R^*)'(x_1-),k)$; see Figure~\ref{fig:proof36b}.

If, on the other hand,  $(R^*)'(a_1-)=k$, then we can find $x_1<a_1$ such that $R^*(x_1)>R_\nu(x_1)$ and $(R^*)'(x_1)>R'_\mu(x_1-\epsilon)$. In this case we define
\begin{equation*}
P_1(x):= R^*(x_1)+(R^*)'(x_1)(x-x_1), \quad x \geq x_1. 
\end{equation*}

Similarly, if $(R^*)'(b_1)>k$, then we define $x_2=b_1$ and $P_2(x)=R^*(x_2)+k_2(x-x_2)$ for $x\leq x_2$ and for $k_2 \in (k,(R^*)'(b_1))$, and otherwise we can find $x_2>b_1$ such that $R^*(x_2)>R_\nu(x_2)$ and  $(R^*)'(x_2)<R'_\mu(x_2+\epsilon)$. We then set
\begin{equation*}
P_2(x):= R^*(x_2)+(R^*)'(x_2)(x-x_2), \quad x \leq x_2. 
\end{equation*}
We can choose $h_1,h_2>0$, $\widetilde{d}<d$ and $k_1,k_2$ such that the function
$$
\widetilde{R}(x):= \begin{cases} 
P_1(x), & x\in [x_1,x_1+h_1],  \\
kx+\widetilde{d}, & x\in [x_1+h_1, x_2-h_2] \\
P_2(x), & x\in [x_2-h_2,x_2], \\
R^*(x), & \mbox{otherwise},
\end{cases}
$$
is a call function which is strictly smaller that $R^*$ but not smaller than $R_\nu$. Also, if $h_1$ and $h_2$ are small enough we have $\widetilde{R}'(x) \in [R_\mu'(x-\epsilon), R_\mu'(x+\epsilon)]$ for all $x \in \setR$, which is a contradiction to~\eqref{infrstar}.
\end{proof}

\begin{figure}[ht]
   \begin{center}
      \includegraphics[width=10cm]{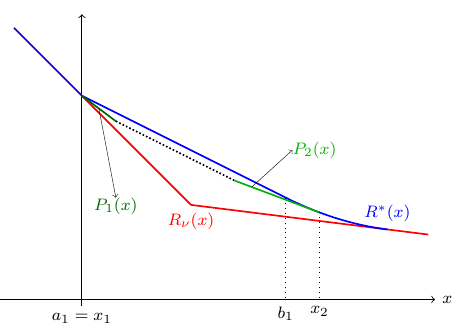}
   \caption{\label{fig:proof36b}
     Case~2 of the proof of Theorem~\ref{minconstruct}, with
      $(R^*)'(a_1-)<k$ and $(R^*)'(b_1)=k$.}
     \label{fig:tilde R}
   \end{center}
\end{figure}

In part~\ref{it:min} of Theorem~\ref{minconstruct}, we showed that $A_\mu^\nu$ has a least element.
The weaker statement that it has an infimum follows from~\cite{KeRo00}, p.~162;
there it is shown that \emph{any} subset of the lattice  $(\setM_m, \leqc)$  has an infimum.
(The stated requirement that the set be bounded from below is always satisfied,
as the Dirac delta~$\delta_m$ is the least
element of $(\setM_m, \leqc)$.)
This infimum is, of course, given by the least element $S_{\nu}(\mu)$ that we found.

%
   If $\nu=\delta_m$, then $S_{\nu}(\mu) = S(\mu)$, the least element
   from Proposition~\ref{LSwinf}.  In this case we have 
   \[
     (R^*)'(x)=\begin{cases}
     R'_\mu(x-\epsilon),&  x< x^*, \\
     R'_\mu(x+\epsilon),&  x\geq  x^*, 
     \end{cases}
   \]
   where $x^*$ is the unique solution of 
   \[
     m+R_\mu(x-\epsilon)-\bigl(\mathbb{E}\mu+\epsilon \bigr)  = R_\mu(x+\epsilon).
   \] 

The following corollary establishes an alternative representation of the inequality~\eqref{vartype},
which we will use to prove Theorem~\ref{strassenwinf}.
Note that, in general, \eqref{vartype} has more than one solution, not all of which are call functions. 
   However, $R^*$ is always a solution.
\begin{cor} \label{xycor}
   Assume that the conditions from Theorem~\ref{minconstruct} hold and denote the call function of $S_\nu(\mu)$
   by $R^*$. Then for all $x \in \setR$ there exists $y \in \setR \cup \{\pm \infty\}$ such that
   \[
     R^*(x)=R_\nu(y)-R_\mu(y+\epsilon\sigma)+R_\mu(x+\epsilon\sigma),
   \]
   where $\sigma=\sgn(y-x)$. Here and in the following we set $R(\infty)=0$ for all call functions $R$ and 
   \[
     R_1(-\infty \pm \epsilon)-R_{2}(-\infty \pm \epsilon):= \lim_{x\rightarrow -\infty} (R_1(x \pm \epsilon)-R_{2}(x \pm \epsilon)),
   \]
   for call functions $R_1$ and $R_2$.
\end{cor}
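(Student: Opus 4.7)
I would apply the variational characterisation from Theorem~\ref{minconstruct}~\ref{ineq} at the given point $x\in\setR$: it forces at least one of $R^*(x)=R_\nu(x)$, $(R^*)'(x)=R'_\mu(x+\epsilon)$, and $(R^*)'(x)=R'_\mu(x-\epsilon)$ to hold. If $R^*(x)=R_\nu(x)$, I take $y:=x$, so that $\sigma=\sgn(0)=0$ and the right-hand side of the claimed identity collapses to $R_\nu(x)=R^*(x)$. Otherwise $R^*(x)>R_\nu(x)$, and I treat the case $(R^*)'(x)=R'_\mu(x+\epsilon)$; the remaining case is symmetric under the reflection exchanging $+\epsilon\leftrightarrow-\epsilon$ and right $\leftrightarrow$ left.

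In this case I set $y:=\inf\{z>x:R^*(z)=R_\nu(z)\}$, with the convention $\inf\emptyset=\infty$. The decisive sub-claim is that $(R^*)'(z)=R'_\mu(z+\epsilon)$ for every $z\in[x,y)$. Granting it, the fundamental theorem of calculus for right derivatives (Proposition~\ref{RFprop}~\ref{it:fund thm}) yields, upon integrating from $x$,
\[
  R^*(z)=R_\mu(z+\epsilon)+\bigl[R^*(x)-R_\mu(x+\epsilon)\bigr],\qquad z\in[x,y].
\]
If $y<\infty$, continuity forces $R^*(y)=R_\nu(y)$ and evaluating the displayed equation at $z=y$ reproduces the identity with $\sigma=+1$. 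If $y=\infty$, then both $R^*(z)$ and $R_\mu(z+\epsilon)$ tend to zero as $z\to\infty$, so the bracketed constant must vanish, and the identity holds at $y=\infty$ under the stated conventions.

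The main obstacle is the sub-claim. On $[x,y)$ the strict inequality $R^*>R_\nu$ combined with Theorem~\ref{minconstruct}~\ref{ineq} reduces the variational inequality to the pointwise dichotomy $(R^*)'(z)\in\{R'_\mu(z-\epsilon),R'_\mu(z+\epsilon)\}$. Because $(R^*)'$ is non-decreasing (convexity of $R^*$) with initial value $R'_\mu(x+\epsilon)$, and $R'_\mu$ is itself non-decreasing, any hypothetical switch at some later point $z_0\in(x,y)$ to the pointwise smaller value $R'_\mu(\cdot-\epsilon)$ would require the left limit $R'_\mu(z_0+\epsilon-)$ of the upper option to satisfy $R'_\mu(z_0+\epsilon-)\leq R'_\mu(z_0-\epsilon)$; by monotonicity of $R'_\mu$ this compels $R'_\mu$ to be flat on $[z_0-\epsilon,z_0+\epsilon)$, so that the two admissible slopes coincide throughout the preceding sub-interval and the identification $(R^*)'(z)=R'_\mu(z+\epsilon)$ still holds there. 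The residual subtlety of a possible jump of $R'_\mu$ at $z_0+\epsilon$ would be handled by a perturbation argument patterned on Case~2 of the proof of Theorem~\ref{minconstruct}: comparing $R^*$ with the tangent translate $z\mapsto R_\mu(z+\epsilon)+[R^*(x)-R_\mu(x+\epsilon)]$ and exploiting the right continuity of all derivatives involved, one arrives at a configuration contradicting the minimality of $R^*=S_\nu(\mu)$ in $A_\mu^\nu$ asserted by Theorem~\ref{minconstruct}~\ref{it:min}.
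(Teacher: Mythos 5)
Your architecture follows the paper's: apply the variational inequality \eqref{vartype} at $x$, split into the three cases, and in the derivative cases propagate the identity $(R^*)'=R'_\mu(\cdot+\epsilon)$ (resp.\ $R'_\mu(\cdot-\epsilon)$) along an interval up to a point $y$ where $R^*$ touches $R_\nu$, then integrate. The one structural difference is your choice of $y$: you take the first point where $R^*$ meets $R_\nu$, whereas the paper takes $y:=\inf\{z\geq x:(R^*)'(z)<R'_\mu(z+\epsilon)\}$. With the paper's choice, the identity $(R^*)'(z)=R'_\mu(z+\epsilon)$ on $[x,y)$ is automatic (since $(R^*)'\leq R'_\mu(\cdot+\epsilon)$ always holds for elements of the ball), and the only thing left to check is the single endpoint condition $R^*(y)=R_\nu(y)$. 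With your choice, the touching at $y$ is automatic by continuity, but the derivative identity on all of $[x,y)$ becomes the burden of proof — and that is where your argument is incomplete.

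Concretely, your sub-claim reduces to excluding the following configuration at the first switch point $z_0\in(x,y)$: $(R^*)'=R'_\mu(\cdot+\epsilon)$ on $[x,z_0)$, $R^*(z_0)>R_\nu(z_0)$, and $(R^*)'(z_0)=R'_\mu(z_0-\epsilon)<R'_\mu(z_0+\epsilon)$. Your monotonicity argument correctly shows this forces $R'_\mu$ to be constant on $[z_0-\epsilon,z_0+\epsilon)$ with a jump at $z_0+\epsilon$, i.e.\ $\mu$ has an atom at $z_0+\epsilon$. But at that point the variational inequality \eqref{vartype} is \emph{satisfied} (the middle term vanishes), so nothing you have quoted rules the configuration out; moreover the flatness of $R'_\mu$ on $[z_0-\epsilon,z_0+\epsilon)$ does not make the two admissible slopes coincide at or after $z_0$, since $z+\epsilon$ then lies past the jump. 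Excluding this case genuinely requires the minimality of $S_\nu(\mu)$ from Theorem~\ref{minconstruct}~\ref{it:min}: one must build a strictly smaller competitor by steepening $R^*$ somewhere to the left of $z_0$ (possible only where $R'_\mu(\cdot-\epsilon)<R'_\mu(\cdot+\epsilon)$, which may force you arbitrarily far left) and flattening it on $(z_0,z_0+\delta)$, and separately handle the degenerate situation where the slope is forced to equal $R'_\mu(\cdot-\epsilon)=R'_\mu(\cdot+\epsilon)$ on all of $(-\infty,z_0)$, in which case one instead shows $R^*(z_0)=R_\nu(z_0)$ directly from $R_\nu\geq(m-\cdot)^+$. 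Your closing sentence promises "a perturbation argument patterned on Case~2" but does not identify these steps; since this is the only nontrivial content of the corollary beyond bookkeeping, the proof as written has a gap. The fix is either to carry out that perturbation, or to adopt the paper's definition of $y$, which confines the same difficulty to the single point $y$ rather than spreading it over an interval.
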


\begin{proof}
   By Theorem~\ref{minconstruct} we know that $R^*$ is a solution of~\eqref{vartype}.
   Let~$x$ be an arbitrary real number.
   If $R^*(x)=R_\nu(x)$, then the above relation clearly holds for $y=x$. Otherwise,
   we have $R^*(x)>R_\nu(x)$, and one of the other two expressions on the left hand
   side of~\eqref{vartype} must vanish at~$x$.
   First we assume that $(R^*)'(x)=R_\mu'(x+\epsilon)$. Define
   \[
     y:= \inf\{z\geq x: \ (R^*)'(z)<R_\mu'(z+\epsilon)\}.
   \]
   If $y<\infty$, then by definition $(R^*)'(y)<R_\mu'(y+\epsilon)$.
   By~\eqref{vartype}, we have $R^*(y)=R_\nu(y)$. 
   It follows that
   \begin{align*}
     R^*(z) =& R^*(y) + \int\nolimits_{y}^z (R^*)'(x) \; dx =  R_\nu(y) + \int\nolimits_{y}^z R_\mu'(x+\epsilon) \; dx = \\
     &R_\nu(y)-R_\mu(y+\epsilon)+R_\mu(z+\epsilon), \quad  \text{for all} \ z \in [x,y].
   \end{align*}
   If $y=\infty$, then this equation, i.e.\ $R^*(z)=R_\mu(z+\epsilon)$, $z\geq x$, also holds.
   
   If, on the other hand, $(R^*)'(x)=R_\mu'(x-\epsilon)$, 
   then we similarly define $y:= \sup\{z\leq x: \ (R^*)'(z)>R_\mu'(z-\epsilon)\}$. 
   If $y>-\infty$ then $(R^*)'(y-)>R_\mu'((y-\epsilon)-)$ and hence $R^*(y)=R_\nu(y)$ by~\eqref{vartype}. Therefore we can write
   \[
     R^*(z)=R_\nu(y)-R_\mu(y-\epsilon)+R_\mu(z-\epsilon), \quad  \text{for all} \ z \in [y,x].
   \]
   If $y=-\infty$ then $(R^*)'(z)=R_\mu'(z-\epsilon)$ for all $z \leq x$. The above equation holds if we take the limit $y\rightarrow \infty$ on the right hand side.
\end{proof}

\begin{cor} \label{Winfcor}
   Using Proposition~\ref{LSwinf} and Theorem~\ref{minconstruct}, for a
   given sequence of measures $(\mu_n)_{n \in \setN}$ in~$\setM$, we inductively define the measures
   \[
     \theta_1 = S(\mu_1;m,\epsilon), \quad \theta_k = S_{\theta_{k-1}}(\mu_k), \quad k \geq 2,
   \]
   if the sets 
   \[
     \big\{\nu \in B^\infty(\mu_{k},\epsilon): \ \theta_{k-1} \leqc \nu \big\}
   \]
   are not empty. Then the following relation holds:
   \[
     R_{\theta_{n}}(x)=R_{\theta_{n-1}}(y)-R_{\mu_{n}}(y+\epsilon \sigma)+R_{\mu_{n}}(x+\epsilon \sigma),
   \]
   where $n \geq 2$, $y\in \setR\cup\{\pm \infty \}$ depends on $x$ and $\sigma=\sgn(y-x)$.
\end{cor}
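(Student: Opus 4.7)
The plan is to observe that this corollary is an inductive specialization of Corollary~\ref{xycor}. Fix $n \geq 2$ and apply that result with $\mu := \mu_n$ and $\nu := \theta_{n-1}$. By assumption, the set $\{\nu \in B^\infty(\mu_n,\epsilon):\ \theta_{n-1}\leqc \nu\}$, which is exactly $A_{\mu_n}^{\theta_{n-1}}$ in the notation of~\eqref{eq:A}, is non-empty. Hence the hypothesis of Theorem~\ref{minconstruct} is met, and the least element $\theta_n = S_{\theta_{n-1}}(\mu_n)$ exists; its call function $R_{\theta_n}$ plays the role of $R^*$ in that theorem and in Corollary~\ref{xycor}.

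Invoking Corollary~\ref{xycor} verbatim then yields, for each $x \in \setR$, some $y \in \setR \cup \{\pm\infty\}$ with
\begin{equation*}
R_{\theta_n}(x) = R_{\theta_{n-1}}(y) - R_{\mu_n}(y+\epsilon\sigma) + R_{\mu_n}(x+\epsilon\sigma), \qquad \sigma = \sgn(y-x),
\end{equation*}
which is precisely the claimed identity. To close the induction, note that by Theorem~\ref{minconstruct}~\ref{it:min} we have $\theta_{n-1} \leqc \theta_n$, which forces $\mathbb{E}\theta_n = \mathbb{E}\theta_{n-1}$; since $\mathbb{E}\theta_1 = m$ is fixed by Proposition~\ref{LSwinf}, every $\theta_k$ lies in $\setM_m \subseteq \setM$, which is exactly the input needed to invoke Corollary~\ref{xycor} at the next step.

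I do not expect any real obstacle here: the statement is a direct re-labelling of Corollary~\ref{xycor} along the chain $(\theta_k)_{k \geq 1}$, and its sole purpose is to provide a clean recursive formula for use in the proof of Theorem~\ref{strassenwinf}. The only detail worth noting is the degenerate case $y \in \{\pm\infty\}$, but this is absorbed by the limit conventions already introduced in Corollary~\ref{xycor}.
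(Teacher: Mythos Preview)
Your proof is correct and follows exactly the paper's own approach: the paper's proof is a one-liner stating that the result follows by applying Theorem~\ref{minconstruct} and Corollary~\ref{xycor} with $\nu=\theta_{n-1}$ and $\mu=\mu_n$. Your additional remarks on the mean being preserved and the $y\in\{\pm\infty\}$ case are sound but not strictly needed for the statement as formulated.
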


\begin{proof}
   The result follows by simply applying Theorem~\ref{minconstruct} and Corollary~\ref{xycor}  with $\nu=\theta_{n-1}$ and $\mu=\mu_{n}$.
\end{proof}

The next corollary will be useful later on in Theorem~\ref{thm:strassenW} and is an easy consequence of~\eqref{vartype}.

\begin{cor}\label{cor:munufinite}
	Let $\mu$, $\nu$ be as in Theorem~\ref{minconstruct} and additionally assume that both measures
	have finite support. Then $S_\nu(\mu)$ has finite support too.
\end{cor}

\begin{proof}
By \ref{RFrel} of Proposition~\ref{RFprop}, the finiteness of the support of a measure~$\theta$ is equivalent to $R_\theta'$ having a finite range. Therefore, we can partition the real line into a finite number of intervals $I_1, \dots, I_N$ such that for all $n \in \{1, \dots N\}$
the functions $R_\nu', R_\mu'(.-\epsilon)$ and $R_\mu'(.+\epsilon)$ are constant on~$I_n$. Since $R_{S_\nu(\mu)}$ solves~\eqref{vartype}, we can conclude that $R'_{S_\nu(\mu)}$ takes at most three distinct values on each $I_n$. Hence,  $R'_{S_\nu(\mu)}$ is piecewise constant and $S_\nu(\mu)$ has finite support.
\end{proof}

We can now prove Theorem~\ref{strassenwinf}, our main result on approximation
by peacocks. We first prove the ``if'' direction, which, unsurprisingly,
is the more difficult one.
\begin{proof}[Proof of Theorem~\ref{strassenwinf}]
   Suppose that~\eqref{minklmaxwinf} holds for some $m\in I$ and all $N\in\setN$,
   $x_1,\dots,x_N\in\setR$. We will inductively construct a sequence  $(P_n)_{n \in \setN}$ of call functions, 
   which will correspond to the measures $(\nu_n)_{n \in \setN}$.
   Define $P_1= R_1^{\min}(\: . \:;m,\epsilon)$. 
   For $N=1$,~\eqref{minklmaxwinf} guarantees that $R_1^{\min}(x) \leq R_2^{\max}(x)$. 
   Note that the continuity of the $R_n$ guarantee that~\eqref{minklmaxwinf} also holds for $x_n \in \{\pm \infty\}$, if we set $\sgn(\infty-\infty)=\sgn(-\infty+\infty)=0$.
   We can now use Theorem~\ref{minconstruct} together with Corollary~\ref{xycor}, 
   with $R_\nu=R_1^{\min}$ and $R_\mu=R_2$, to construct a call function $P_2$, which satisfies
   \[
     P_2(x)=R_1^{\min}(x_1)+R_2(x+\epsilon\sigma)-R_2(x_1+\epsilon\sigma), \quad x \in \setR,
   \]
   where $\sigma = \sgn(x_{1}-x)$, and $x_1$ depends on $x$.
   If we use \eqref{minklmaxwinf} we get that
   \[
     R_1^{\min}(x_1)+R_2\bigl(x+\epsilon \sigma_2 \bigr)-R_2\bigl(x_{1}+\epsilon \sigma_2 \bigr) \leq R_{n}^{\max}(x; m, \epsilon), \quad n \geq 3, \ x_1,x \in \setR.
   \]
   Hence $P_2(x) \leq  R_n^{\max}(x)$ for all $x \in \setR$ and for all $n \geq 3$.  
   Now suppose that we have already constructed a finite sequence $(P_1, \dots, P_N)$ such that $P_n \leq P_{n+1}$, $1\leq n <N$, 
   and such that $P_N \leq  R_n^{\max}$ for all $x \in \setR$ and for all $n \geq N+1$. 
   Then by induction we know that for all $x\in \setR$ there exists $(x_1, \dots, x_{N-1})$ such that
   \[
     P_N(x)=R_1^{\min}(x_1) + \sum_{n=2}^{N-1}\Big(R_n\bigl(x_n+\epsilon \sigma_n \bigr)-R_n\bigl(x_{n-1}+\epsilon \sigma_n \bigr)\Big) + R_N\bigl(x+\epsilon \sigma_N \bigr)-R_N\bigl(x_{N-1}+\epsilon \sigma_N \bigr),
   \]
   with $\sigma_N= \sgn(x_{N-1}-x)$. In particular, we have $P_N\leq R_{N+1}^{\max}$. 
   We can therefore again use Corollary~\ref{xycor}, with $R_\mu=R_{N+1}$ and $R_\nu=P_{N}$, 
   to construct a call function $P_{N+1}$, such that 
   \begin{multline*}
     P_{N+1}(x)=R_1^{\min}(x_1) + \sum_{n=2}^{N}\Big(R_n\bigl(x_n+\epsilon \sigma_n \bigr)-R_n\bigl(x_{n-1}+\epsilon \sigma_n \bigr)\Big)\\
      + R_{N+1}\bigl(x+\epsilon \sigma_{N+1} \bigr)-R_{N+1}\bigl(x_{N}+\epsilon \sigma_{N+1}\bigr),
   \end{multline*}
   where $\sigma_{N+1}= \sgn(x_{N}-x)$ and $(x_1, \dots, x_N)$ depend on $x$. 
   Assumption~\eqref{minklmaxwinf} guarantees that $P_{N+1} \leq R_n^{\max}$ for all $n \geq N+1$. 

   We have now constructed a sequence of call functions, such that $P_n \leq P_{n+1}$. Their associated measures, 
   which we will denote by $\nu_n$, satisfy $W^\infty(\mu_n, \nu_n)\leq \epsilon$ and $\nu_n \leqc \nu_{n+1}$. 
   Thus we have constructed a peacock with mean $m$.
   
   We proceed to the proof of the (easier) ``only if'' direction
   of Theorem~\ref{strassenwinf}. Thus,
   assume that $(\nu_n)_{n \in \setN}$ is a peacock such that $W^\infty(\mu_n, \nu_n) \leq \epsilon$ and set $m=\mathbb{E}\nu_1$. 
   Denote the call function of $\nu_n$ by $P_n$. We will show by induction that~\eqref{minklmaxwinf} holds. For $N=1$ we have 
   \[
     R_1^{\min}(x;m,\epsilon) \leq P_1(x) \leq P_2(x) 
     \leq R_2^{\max}(x;m,\epsilon), \quad x\in \setR,
   \]
   by Proposition~\ref{LSwinf}. 

   For $N=2$ and $x_1 \leq x_2$ we have
   \begin{align*} 
     R_1^{\min}(x_1;m,\epsilon) + R_2(x_2-\epsilon)-R_2(x_1-\epsilon)
     &\leq P_2(x_1) + \int\nolimits_{x_1}^{x_2} R_2'(z-\epsilon) \, dz  \\
     &\leq P_2(x_1) + \int\nolimits_{x_1}^{x_2} P_2'(z) \, dz \\
     &=P_2(x_2) \leq  P_3(x_2) \leq R_3^{\max}(x_2;m,\epsilon).
   \end{align*} 
   Similarly, if $x_2 \leq x_1$,
   \begin{align*} 
     R_1^{\min}(x_1;m,\epsilon) + R_2(x_2+\epsilon)-R_2(x_1+\epsilon) &\leq P_2(x_1) - \int\nolimits_{x_2}^{x_1} R_2'(z+\epsilon) \, dz  \\
     &\leq P_2(x_1) - \int\nolimits_{x_2}^{x_1} P_2'(z) \, dz \\
     &=P_2(x_2) \leq  P_3(x_2) \leq R_3^{\max}(x_2;m,\epsilon).
   \end{align*} 
   If \eqref{minklmaxwinf} holds for $N-1$ and $x_{N-1}\leq x_N$, then
   \begin{align*}
     R_1^{\min}(x_1; m, \epsilon)& + \sum_{n=2}^{N}\left(R_n\bigl(x_n+\epsilon \sigma_n \bigr)-R_n\bigl(x_{n-1}+\epsilon \sigma_n \bigr)\right) \\
     &\leq P_{N-1}(x_{N-1})+R_N\bigl(x_N-\epsilon \bigr)-R_N\bigl(x_{N-1}-\epsilon \bigr) \\
     &\leq P_N(x_{N-1}) + \int\nolimits_{x_{N-1}}^{x_N} P_N'(z) \, dz \\
     &\leq P_{N+1}(x_N) \leq R_{N+1}^{\max}(x_N;m,\epsilon).
   \end{align*}
   The case where $x_{N-1}\geq x_N$ can be dealt with similarly.
\end{proof}

\begin{proof}[Proof of Corollary~\ref{strassenwinf2}]
	First, by going through the proof of Theorem~\ref{strassenwinf} a second time, we see that $R_{N+1}^{\max}(x_N; m, \epsilon)$, in the definition of~$\Phi_N$ can be replaced by 
	\[
	   \widetilde{R_{N+1}^{\max}}(x_N; m, \epsilon):= \Bigl(m+R_{N+1}(x_N +\epsilon)-\bigl(\mathbb{E}\mu-\epsilon \bigr)\Bigr) \wedge R_{N+1}(x_N -\epsilon),
	\]
	which is $R_{N+1}^{\max}(x_N; m, \epsilon)$ without the convex envelope. 
	
	Next, we can split up~\eqref{minklmaxwinf} into four inequalities according to the different components of  $\widetilde{R_{N+1}^{\max}}$ and $R_{1}^{\min}$. In two of these inequalities $m$ does not appear, and these are exactly equations~\eqref{eq:minklmax1} and \eqref{eq:minklmax2}. The remaining two inequalities are given by
	\begin{align*}
	  R_1(x_1 - \epsilon) + m - (\mathbb{E}\mu_1 + \epsilon) + 
	  \sum_{n=2}^{N}R_n\bigl(x_n+\epsilon \sigma_n \bigr)-R_n\bigl(x_{n-1}+\epsilon \sigma_n \bigr)-
	  R_{N+1}(x_N + \epsilon) \leq 0, \\
	    R_1(x_1 + \epsilon)  +
	  \sum_{n=2}^{N}R_n\bigl(x_n+\epsilon \sigma_n \bigr)-R_n\bigl(x_{n-1}+\epsilon \sigma_n \bigr)-
	  \Bigl(R_{N+1}(x_N + \epsilon)+ m - (\mathbb{E}\mu_{N+1} - \epsilon)\Bigr) \leq 0.
    \end{align*}
    In particular, $m$ can only exist if  
    \begin{align}
    \nonumber
      \sup_{\substack{N_1 \in \mathbb{N} \\ x_1, \dots, x_{N_1} \in \setR}} & \biggl\{ 
        R_1(x_1 + \epsilon) + \sum_{n=2}^{N_1} R_n(x_n + \epsilon \sigma_n) - R_{n}(x_{n-1} + \epsilon \sigma_n) - R_{N_1+1}(x_{N_1} + \epsilon) + \mathbb{E}\mu_{N_1+1} \biggr\} - \epsilon \leq \\
        \label{eq:mbounds}
      \inf_{\substack{N_2 \in \mathbb{N} \\ y_1, \dots, y_{N_2} \in \setR}} & \biggl\{ 
        -R_1(y_1 - \epsilon) - \sum_{n=2}^{N_2} R_n(y_n + \epsilon \sigma_n) - R_{n}(y_{n-1} + \epsilon \sigma_n) + R_{N_2+1}(y_{N_2} - \epsilon) + \mathbb{E}\mu_{1} \biggr\} + \epsilon,
    \end{align}
    in which case~$m$ can be chosen arbitrarily from the closed interval with bounds given by the left hand side resp.\ right hand side of~\eqref{eq:mbounds}. A simple modification of~\eqref{eq:mbounds} yields~\eqref{eq:minklmax3}.
\end{proof}

\begin{rem}
   In Theorem~\ref{strassenwinf}, it is actually not necessary that the balls centered at
   the measures~$\mu_n$ are all of the  same size. The theorem easily generalizes to the following result: 
   For $m \in \setR$, a sequence of non-negative numbers $(\epsilon_n)_{n \in \setN}$, and a sequence of measures $(\mu_n)_{n \in \setN}$ in $\setM$, define
   \begin{multline}\label{eq:Phi-gen}
     \Phi_N(x_1,\dots,x_N;m,\epsilon_1,\dots,\epsilon_{N+1})=
     R_1^{\min}(x_1; m, \epsilon_1) \\
     + \sum_{n=2}^{N}\Big(R_n(x_n+\epsilon_n \sigma_n )-R_n(x_{n-1}+\epsilon_n \sigma_n )\Big)
     - R_{N+1}^{\max}(x_N; m, \epsilon_{N+1}),\\
     N \in \setN,\ x_1,\dots,x_N \in \setR,
   \end{multline}
   with~$\sigma_n$ defined in~\eqref{eq:sigma},
   and assume that
   \[
     I:= \bigcap_{n \in \setN}[\mathbb{E}\mu_n-\epsilon_n, \mathbb{E}\mu_n+\epsilon_n] 
   \]
   is not empty. Then there exists a peacock $(\nu_n)_{n \in \setN}$ 
   such that 
   \[
     W^\infty(\mu_n, \nu_n) \leq \epsilon_n, \quad \mbox{for all} \ n \in \setN,
   \]
   if and only if for some $m \in I$
   and for all $N \in \setN$,  $x_1, \dots, x_N \in \setR$, 
   we have
   \begin{equation*} 
     \Phi_N(x_1,\dots,x_N;m,\epsilon_1, \dots, \epsilon_{N+1})\leq0.
   \end{equation*}
   To prove this result, simply replace $\epsilon$ by $\epsilon_n$ in the
   proof of Theorem~\ref{strassenwinf}.
\end{rem}

\begin{rem}
   If a probability metric is comparable with the infinity Wasserstein distance,
   then our Theorem~\ref{strassenwinf} implies a corresponding result about that metric
   (but, of course, not an ``if and only if'' condition). 
   Denote by $W^p$ the $p$-Wasserstein distance ($p\geq 1$), defined by
   \[
     W^p(\mu,\nu)=\inf \Bigl(\mathbb{E}[|X-Y|^p]\Bigr)^{1/p}, \quad \mu,\nu \in \setM. 
   \]
   The infimum is taken over all probability spaces $(\Omega,\mathcal{F}, \pp)$ and random pairs $(X,Y)$ with marginals given by $\mu$ and $\nu$.
   Clearly, we have that for all $\mu,\nu \in \setM$ and $p\geq 1$
   \[
     W^\infty(\mu,\nu) \geq W^p(\mu,\nu).
   \]
   Hence, given a sequence $(\mu_n)_{n \in \setN}$,~\eqref{minklmaxwinf} is a sufficient
   condition 
   for the existence of a peacock $(\nu_n)_{n \in \setN}$, such that
   $W^p(\mu_n,\nu_n) \leq \epsilon$ for all $n \in \setN$. 
   But since the balls with respect to $W^p$ 
   are in general strictly larger 
   than the balls with respect to $W^\infty$, we cannot expect~\eqref{minklmaxwinf} to be necessary.
\end{rem}

\section{Approximation by peacocks: infinity Wasserstein distance (continuous time)}\label{se:cont}

In this section we will formulate a version of Theorem \ref{strassenwinf} for continuous index sets.
We generalize the definition of $\Phi_N$ from \eqref{eq:Phi} as follows.
For finite sets $\mathcal{T}=\{t_1,\dots,t_{N+1}\} \subseteq [0,1]$ with
$t_1 < t_2 < \dots <t_{N+1}$, we set
\begin{multline}\label{eq:Phigen}
  \Phi_\mathcal{T}(x_1,\dots,x_N;m,\epsilon)=
  R_{t_1}^{\min}(x_1; m, \epsilon) \\
  + \sum_{n=2}^{N}\Big(R_{t_n}(x_n+\epsilon \sigma_n )-R_{t_{n-1}}(x_n+\epsilon \sigma_n )\Big)
   - R_{t_{N+1}}^{\max}(x_N; m, \epsilon).
\end{multline}
Here, $R_{t_1}^{\min}$ is the call function of $S(\mu_{t_1}; m, \epsilon)$, $R_{t_{N+1}}^{\max}$ is the call function of $T(\mu_{t_{N+1}}; m ,\epsilon)$, and \textnormal{$\sigma_n= \sgn(x_{n-1}-x_n)$}
depends on~$x_{n-1}$ and~$x_n$.
Using $\Phi_\mathcal{T}$, we can now formulate a necessary and sufficient condition for
the existence of a peacock within $\epsilon$-distance. The continuity assumption~\eqref{distcont}
occurs in the proof in a natural way; we do not know to which extent it can be relaxed.

\begin{thm}\label{thm:01}
   Assume that $(\mu_t)_{t \in [0,1]}$ is a family of measures in $\setM$ such that 
   \[
     I:=\bigcap_{t \in [0,1]}[\mathbb{E}\mu_t-\epsilon, \mathbb{E}\mu_t+\epsilon]
   \] 
   is not empty and such that
   \begin{equation} \label{distcont}
     \lim_{s \uparrow t} \mu_s = \mu_t \ \text{weakly}, \quad t \in [0,1].
   \end{equation}
   Then there exists a peacock $(\nu_t)_{t \in [0,1]}$ with 
   \[
     W^\infty\bigl(\mu_t, \nu_t\bigr) \leq \epsilon,  \quad \mbox{for all} \ t \in [0,1],
   \]
   if and only if there exists $m \in I$ such that for all finite sets $ \mathcal{T}=\{t_1,\dots,t_{N+1}\} \subset \setQ\cap[0,1]$ 
   with $t_1 < t_2 < \dots <t_{N+1}$, and for all $x_1, \dots, x_N \in \setR$ we have that
   \begin{equation} \label{winfcontcond}
     \Phi_\mathcal{T}(x_1,\dots,x_N;m,\epsilon) \leq 0.
   \end{equation}
   In this case it is possible to choose $\mathbb{E}\nu_t=m$ for all $t \in [0,1]$. 
\end{thm}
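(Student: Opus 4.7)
The necessity direction is immediate: given an admissible peacock $(\nu_t)_{t\in[0,1]}$, its restriction to a finite increasing rational grid $\mathcal{T}=\{t_1<\dots<t_{N+1}\}$ is a finite peacock within $\epsilon$-distance from $(\mu_{t_i})$, so the necessity part of Theorem~\ref{strassenwinf} applied to this finite sequence (with $m=\mathbb{E}\nu_0$) yields~\eqref{winfcontcond}.

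For sufficiency my plan is to fix $m\in I$ satisfying~\eqref{winfcontcond} and work in two stages. In the first stage I would build a peacock $(\tilde\nu_q)_{q\in\setQ\cap[0,1]}$ with mean $m$ and $W^\infty(\mu_q,\tilde\nu_q)\le\epsilon$. To this end I would take $D_n:=\{k/n!:0\le k\le n!\}$, so that $D_1\subset D_2\subset\cdots$ and $\bigcup_n D_n=\setQ\cap[0,1]$. Condition~\eqref{winfcontcond} restricted to $\mathcal{T}=D_n$ is exactly the hypothesis of the finite version of Theorem~\ref{strassenwinf} for the ordered sequence $(\mu_q)_{q\in D_n}$, producing a finite peacock $(\tilde\nu_q^{(n)})_{q\in D_n}$. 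The bounds~\eqref{eq:R est} show that each ball $B^\infty(\mu_q,\epsilon)$ is tight and weakly closed, hence weakly compact, so a standard diagonal extraction over the countable set $\setQ\cap[0,1]$ yields a subsequence along which $\tilde\nu_q^{(n)}$ converges weakly to some $\tilde\nu_q\in B^\infty(\mu_q,\epsilon)$ for every rational $q$; the peacock property is preserved under weak convergence of mean-constant sequences.

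In the second stage I would extend to $[0,1]$ by left-continuous call-function limits. Define
\[
  R_t(x):=\sup\bigl\{R_{\tilde\nu_q}(x):q\in\setQ\cap[0,t]\bigr\},\qquad x\in\setR,
\]
so that $R_t=R_{\tilde\nu_t}$ at rational $t$. Each $R_t$ is convex and decreasing, and sandwiching it between $R_{\tilde\nu_q}$ for rational $q$ on both sides of $t$ transfers the limit relations of Proposition~\ref{RFprop}~\ref{limitcall}, so that $R_t$ is the call function of some $\nu_t\in\setM_m$; monotonicity in $t$ is built into the definition, so $(\nu_t)_{t\in[0,1]}$ is a peacock of constant mean $m$.

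The hard part will be verifying $W^\infty(\mu_t,\nu_t)\le\epsilon$ for irrational $t$, which is precisely where~\eqref{distcont} must enter. For rational $q<t$ one has $F_{\mu_q}(x-\epsilon)\le F_{\tilde\nu_q}(x)\le F_{\mu_q}(x+\epsilon)$, and~\eqref{distcont} gives $F_{\mu_q}\to F_{\mu_t}$ pointwise as $q\uparrow t$. The delicate step will be to translate the monotone convergence $R_{\tilde\nu_q}\uparrow R_t$ of call functions into convergence of the right derivatives (a standard fact for convex functions, valid at continuity points of the limit) and then, using right-continuity of $F_{\nu_t}$ and density of the rationals, to conclude $F_{\mu_t}(x-\epsilon)\le F_{\nu_t}(x)\le F_{\mu_t}(x+\epsilon)$ at every $x$, i.e.\ $W^\infty(\mu_t,\nu_t)\le\epsilon$.
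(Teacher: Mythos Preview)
Your argument is essentially correct but follows a genuinely different route from the paper. The paper does not invoke compactness at all: instead of applying Theorem~\ref{strassenwinf} as a black box on each grid and then extracting a diagonal subsequence, it builds $\nu_q$ directly for each rational $q=r/s$ by iterating the least-element operator $S_\nu(\mu)$ of Theorem~\ref{minconstruct} along the grid $\{k/(sn):k=1,\dots,rn\}$ and letting $n\to\infty$. The resulting call functions form a \emph{monotone} increasing sequence bounded above by $R_{T(\mu_q;m,\epsilon)}$, so convergence is automatic and the limit is visibly a call function of mean~$m$ in $B^\infty(\mu_q,\epsilon)$. The peacock property on $\setQ\cap[0,1]$ then follows from a common-refinement argument (for $p=r_1/s_1<q=r_2/s_2$, compare along grids of mesh $1/(s_1 s_2 n)$). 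The extension to irrational $t$ and the use of~\eqref{distcont} are then done exactly as you outline.

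Your compactness approach is more routine and avoids re-opening the machinery behind Theorem~\ref{strassenwinf}, but it does need one point you pass over: weak convergence alone does not preserve means or call functions, so to conclude that the diagonal limits $\tilde\nu_q$ still have mean~$m$ and that $\tilde\nu_p\leqc\tilde\nu_q$ survives in the limit, you must invoke the uniform integrability coming from the ball, namely $R_\nu(x)\le R_{\mu_q}(x-\epsilon)$ uniformly over $\nu\in B^\infty(\mu_q,\epsilon)$ (and the analogous left-tail bound). The paper's monotone construction sidesteps this issue entirely. Conversely, your approach would transplant more easily to settings where an explicit least element is unavailable.
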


\begin{proof}
   By Theorem \ref{strassenwinf}, condition \eqref{winfcontcond} is clearly necessary for the existence of such a peacock. 
   In order to show that it is sufficient, fix $m \in I$ such that \eqref{winfcontcond} holds. We will first construct $\nu_q$ for $q \in \mathcal D$,
   where
   \[
     \mathcal D = \{ a 2^{-b} \in [0,1]: a,b\in\mathbb{N}_0 \}.
   \] 
  For $n\in\mathbb N$, define measures (recall the notation from Theorem~\ref{minconstruct})
  \[
    \theta_0^{(n)}=\mu_0 \quad \text{and}\quad
    \theta_k^{(n)} = S_{\theta^{(n)}_{k-1}}(\mu_{k2^{1-n}}), \quad 1\leq k\leq 2^{n-1}.
  \]
  Condition~\eqref{winfcontcond} guarantees that these measures exist.
  Obviously,
  \begin{equation}\label{eq:theta inc}
    \theta_k^{(n)} \leqc \theta_{k+1}^{(n)}, \quad n\in\setN,\ 0\leq k<2^{n-1}.
  \end{equation}
  We show by induction on~$k$ that
  \begin{equation}\label{eq:thetak}
    \theta_k^{(n)} \leqc \theta_{2k}^{(n+1)}, \quad n\in\mathbb N,\ 0\leq k\leq 2^{n-1}.
  \end{equation}
  For $k=1$, we have
  \begin{align*}
    \theta_1^{(n)} &= S_{\mu_0}(\mu_{2^{1-n}}) \\
    &\leqc S_{\theta_1^{(n+1)}}(\mu_{2^{1-n}}) = \theta_2^{(n+1)}, \quad n\in\mathbb N.
  \end{align*}
  For $k\geq 1$, we obtain
   \begin{align*}
    \theta_{k+1}^{(n)} &= S_{\theta_{k}^{(n)}}(\mu_{(k+1)2^{1-n}}) \\
    &\leqc S_{\theta_{2k}^{(n+1)}}(\mu_{(k+1)2^{1-n}}) \\
    &\leqc S_{\theta_{2k+1}^{(n+1)}}(\mu_{(k+1)2^{1-n}}) = \theta_{2k+2}^{(n+1)}, \quad n\in\mathbb N,
  \end{align*}
   where the first ``$\leqc$'' follows from the induction hypothesis and the definition
   of $S_{\cdot}(\cdot)$. Thus, \eqref{eq:thetak} is true.
   
   For $q=a2^{-b}\in\mathcal D$, define
   \[
     \nu_q^{(n)} = \theta^{(n)}_{a2^{n-b-1}} \in B^\infty(\mu_q,\epsilon),\quad n>b.
   \]
   By~\eqref{eq:thetak}, we have $\nu_q^{(n)} \leqc \nu_q^{(n+1)}$, $n>b$.
   Let $R_n$ be the call function associated to $\nu_q^{(n)}$.
   Then we have 
   \begin{equation}\label{contproof1}
     R_{S(\mu_q;m,\epsilon)} \leq R_n \leq R_{n+1} \leq R_{T(\mu_q;m,\epsilon)}, \quad n >b,
   \end{equation}
   and thus the bounded and increasing sequence $(R_n)$ converges pointwise to a function $R$. 
   As a limit of decreasing convex functions, $R$ is also decreasing and convex and together with \eqref{contproof1} 
   we see that $R$ is a call function with $\lim_{x \rightarrow -\infty}= R(x)+x=m$. 
   Therefore $R$ can be associated to a measure $\nu_q \in \setM_m$. 

   Next, we will show that $\nu_q \in B^\infty(\mu_q, \epsilon)$. From the convexity of the $R_n$ we get that 
   \begin{align*}
     R'(x) &= \lim_{h \downarrow 0} \lim_{n \rightarrow \infty} \frac {R_n(x+h)-R_n(x)}h \\
     &\geq \lim_{h \downarrow 0} \lim_{n \rightarrow \infty} R_n'(x+h) \\
      &\geq \lim_{h \downarrow 0} \lim_{n \rightarrow     \infty} R_{\mu_q}'(x+h-\epsilon) = R_{\mu_q}'(x-\epsilon),
   \end{align*}
   and similarly
   \begin{align*}
   R'(x) &= \lim_{h \downarrow 0} \lim_{n \rightarrow \infty} \frac {R_n(x+h)-R_n(x)}h\\
    &\leq \lim_{h \downarrow 0} \lim_{n \rightarrow \infty} R_n'(x) \\
     &\leq \lim_{n \rightarrow \infty} 
   R_{\mu_q}'(x+\epsilon) = R_{\mu_q}'(x+\epsilon),
   \end{align*}
   thus $W^\infty(\nu_q,\mu_q)\leq \epsilon$. 

   For two elements $q<q'$ of $\mathcal D$, it is an immediate consequence of~\eqref{eq:theta inc}
   that $\nu_q^{(n)} \leqc \nu_{q'}^{(n)}$ for large~$n$, and therefore
   $\nu_q \leqc \nu_{q'}$. It follows that $(\nu_t)_{t\in\mathcal D}$ is a peacock.
   Now pick $t \in [0,1]\setminus\mathcal D$ and a sequence $\mathcal D\ni q_n\nearrow t$.
   The sequence of call functions corresponding to $\nu_{q_n}$ increases
   and converges to a call function, which is clearly independent of the choice of~$q_n$.
   Denote the associated measure by~$\nu_t$; it satisfies $\mathbb{E}{\nu_t}=m$.  
   Fix $x\in\setR$ and define
   \[
     \mathcal{H} = \{h\in\mathbb R : F_{\mu_t}\ \text{continuous at}\ x+h-\epsilon\}.
   \]
   Note that $(0,\infty)\setminus \mathcal H$ is countable. We obtain
   \begin{align*}
      R'_{\nu_t}(x) &= \lim_{h \downarrow 0} \lim_{n \rightarrow \infty}
       \frac {R_{\nu_{q_n}}(x+h)-R_{\nu_{q_n}}(x)}h \\
       &\geq \lim_{h \downarrow 0} \lim_{n \rightarrow \infty} R_{\nu_{q_n}}'(x+h) \\
       &\geq \lim_{h \downarrow 0} \lim_{n \rightarrow \infty} R_{\mu_{q_n}}'(x+h-\epsilon)\\
       &= \lim_{h \downarrow 0,h\in\mathcal H} \lim_{n \rightarrow \infty} R_{\mu_{q_n}}'(x+h-\epsilon) \\
       &= \lim_{h \downarrow 0} R_{\mu_t}'(x+h-\epsilon)=R_{\mu_t}'(x-\epsilon),     
    \end{align*}
   where the last but one equality follows from~\eqref{distcont}.
   Similarly we see that $R_{\nu_t}'(x) \leq R_{\mu_t}'(x+\epsilon)$. 
   We have shown that $\nu_t \in B^\infty(\mu_t, \epsilon)$ for all $t \in [0,1]$. 
   From the definition of $\nu_t$ we have $\nu_q \leqc \nu_t$ for $q<t, q\in\mathcal D$ 
   and $\nu_t \leqc \nu_p$ for $p>t, p\in\mathcal D$. 
   This implies $\nu_s \leqc \nu_t$ for all $0 \leq s \leq t \leq 1$, and thus $(\nu_t)_{t \in [0,1]}$ is a peacock with mean~$m$.
\end{proof}

\section{Approximation by peacocks: stop-loss distance}  \label{sec:SL}

The stop-loss distance~\cite{DeDh92,Ge79,KaVaGo88}
is defined as
\[
   d^{\mathrm{SL}}(\mu, \nu) = \sup_{x \in \setR} \big|R_{\mu}(x)-R_{\nu}(x) \big|,
   \quad \mu,\nu \in \setM.
\]
We will denote closed balls with respect to~$d^{\mathrm{SL}}$ by~$B^{\mathrm{SL}}$. 
In the following proposition, we use the same notation for least elements as in the case
of the infinity Wasserstein distance; no confusion should arise.
\begin{prop}\label{LSdsl}
Given $\epsilon>0$, a measure $\mu \in \setM$ and $m \in [\mathbb{E}\mu-\epsilon, \mathbb{E}\mu+\epsilon]$, there exists a unique measure $S(\mu) \in B^{\mathrm{SL}}(\mu,\epsilon)\cap \setM_m$, such that 
\[
S(\mu) \leqc \nu, \quad \mbox{for all} \ \nu \in B^{\mathrm{SL}}(\mu,\epsilon)\cap \setM_m.
\]
The call function of $S(\mu)$ is given by
\begin{equation}\label{eq:sl min}
  R^{\min}_\mu(x)=R_{S(\mu)}(x)=\bigl(m-x \bigr)^+ \vee \bigl(R_\mu(x)-\epsilon\bigr).\\
\end{equation}
To highlight the dependence on $\epsilon$ and $m$ we will sometimes write $S(\mu; m, \epsilon)$ or $R^{\min}_\mu(\: \cdot \:; m, \epsilon)$. 
\end{prop}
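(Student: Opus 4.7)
The plan is to take the function $R^{\min}_\mu$ defined by the right hand side of \eqref{eq:sl min} as a candidate, verify it is a legitimate call function associated to a measure in $\setM_m$, check that the resulting measure lies in the ball $B^{\mathrm{SL}}(\mu,\epsilon)$, and then show that every competitor $\nu \in B^{\mathrm{SL}}(\mu,\epsilon) \cap \setM_m$ satisfies $R_\nu \geq R^{\min}_\mu$ pointwise, which by Proposition~\ref{RFprop}~\ref{convexorderR} (combined with the matching means) is exactly $S(\mu) \leqc \nu$.

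First I would check the conditions of Proposition~\ref{RFprop} for $R^{\min}_\mu$. Convexity and monotone decrease are immediate, since $x \mapsto (m-x)^+$ and $x \mapsto R_\mu(x)-\epsilon$ are both convex and decreasing. For the limit at $+\infty$, observe that $R_\mu(x) \to 0$, hence $R_\mu(x)-\epsilon < 0 = (m-x)^+$ for $x$ large, giving $R^{\min}_\mu(x) \to 0$. For the limit at $-\infty$, $(m-x)^+ + x = m$ whereas $R_\mu(x)-\epsilon+x \to \mathbb{E}\mu-\epsilon \leq m$ by hypothesis, so $(m-x)^+$ dominates eventually and $R^{\min}_\mu(x)+x \to m$. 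By Proposition~\ref{RFprop}, $R^{\min}_\mu$ is the call function of some $S(\mu) \in \setM_m$.

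Next I would verify $S(\mu) \in B^{\mathrm{SL}}(\mu,\epsilon)$, i.e.\ $|R^{\min}_\mu(x)-R_\mu(x)| \leq \epsilon$ for all $x$. The inequality $R^{\min}_\mu \geq R_\mu - \epsilon$ is built into the definition. For the reverse, I need $(m-x)^+ \leq R_\mu(x)+\epsilon$. The key observation is that Jensen's inequality yields $R_\mu(x) \geq \mathbb{E}\mu - x$ for every $x$, so $R_\mu(x)+\epsilon \geq \mathbb{E}\mu-x+\epsilon \geq m-x$, which together with $R_\mu(x)+\epsilon \geq \epsilon \geq 0$ gives the bound on $(m-x)^+$. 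Hence $R^{\min}_\mu \leq R_\mu + \epsilon$.

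For the minimality, let $\nu \in B^{\mathrm{SL}}(\mu,\epsilon) \cap \setM_m$ be arbitrary, with call function $R_\nu$. From $d^{\mathrm{SL}}(\mu,\nu)\leq\epsilon$ we get $R_\nu(x) \geq R_\mu(x)-\epsilon$, and applying Jensen's inequality to $\nu$ (which has mean $m$) gives $R_\nu(x) = \int(y-x)^+\nu(dy) \geq (m-x)^+$. Taking the maximum of these two lower bounds yields $R_\nu \geq R^{\min}_\mu$ pointwise. Since $\mathbb{E}\nu = m = \mathbb{E}S(\mu)$, Proposition~\ref{RFprop}~\ref{convexorderR} gives $S(\mu)\leqc\nu$. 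Uniqueness of the least element follows from the antisymmetry of convex order: any two least elements are mutually dominated, hence have identical call functions and therefore coincide. There is no serious obstacle here; the only step that requires a moment's thought is the upper bound $R^{\min}_\mu \leq R_\mu+\epsilon$, which relies on the assumption $m \leq \mathbb{E}\mu+\epsilon$ via the Jensen estimate for $R_\mu$.
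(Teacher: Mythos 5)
Your proof is correct and follows essentially the same route as the paper, which simply verifies that the right-hand side of~\eqref{eq:sl min} is a call function with mean~$m$ and declares the rest ``clear.'' You have merely filled in the details left implicit there (ball membership via the Jensen bound $R_\mu(x)\geq \mathbb{E}\mu-x$ together with $m\leq\mathbb{E}\mu+\epsilon$, and minimality because any competitor's call function dominates both branches of the maximum), and these details are right.
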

\begin{proof}
It is easy to check that $R_{S(\mu)}$ defines a call function, and by \ref{limitcall} of Proposition \ref{RFprop} we have
\begin{align*}
  \mathbb{E}R_{S(\mu)} &= \lim_{x\rightarrow -\infty} R_{S(\mu)}(x)+x \\
  &=\lim_{x\rightarrow -\infty} \bigl(m \vee \bigl(R_\mu(x)+x-\epsilon\bigr)\bigr) \\
  &= m \vee \bigl(\mathbb{E}\mu-\epsilon\bigr)=m.
\end{align*}
The rest is clear. 
\end{proof}
\begin{rem}
The set $B^{\mathrm{SL}}(\mu,\epsilon)\cap \setM_m$ does not contain a greatest element. To see this, take an arbitrary $\nu \in B^{\mathrm{SL}}(\mu,\epsilon)\cap \setM_m$ and define $x_0 \in \setR$ as the unique solution of $R_\nu(x)=\frac12 \epsilon$. Then for $n \in \setN$ define new call functions
$$
R_n(x)=\begin{cases}
(x-x_0)\frac{R_\nu(x_0+n)-R_\nu(x_0)}{n}+R_\nu(x_0), & x \in [x_0,x_0+n], \\[6pt]
R_\nu(x), & \mathrm{otherwise}. \end{cases}
$$
It is easy to check that $R_n$ is indeed a call function and the associated measures $\theta_n$ lie in $B^{\mathrm{SL}}(\mu,\epsilon)\cap \setM_m$. Furthermore, from the convexity of $R_\nu$ we can deduce that $R_\nu \leq R_n \leq R_{n+1}$, and hence $\nu \leqc \theta_n \leqc \theta_{n+1}$. The call functions $R_n$ converge to a function $R$ which is not a call function since $R(x)=R_\nu(x_0)=\frac \epsilon 2 $ for all $x \geq x_0$. Therefore no greatest element can exist. 
However, it is true that a measure $\nu$ is in $B^{\mathrm{SL}}(\mu,\epsilon)$ if and only if $R^{\min}_\mu(\: . \:; \mathbb{E}\nu,\epsilon) \leq R_\nu \leq R_\mu+\epsilon$.
\end{rem}

\begin{thm} \label{mainsl}
Let $(\mu_n)_{n \in \setN}$ be a sequence in $\setM$  such that 
\begin{equation*} \label{setnichtleerSL}
I:= \bigcap_{n \in \setN}[\mathbb{E}\mu_n-\epsilon, \mathbb{E}\mu_n+\epsilon],
\end{equation*}
is not empty. Denote by $(R_n)_{n \in \setN}$ the corresponding call functions. Then there exists a peacock~$(\nu_n)_{n \in \setN}$ such that
\begin{equation}\label{eq:sl d}
  d^{\mathrm{SL}}(\mu_n, \nu_n) \leq \epsilon, \quad n\in\setN,
\end{equation}
if and only if
\begin{equation}\label{eq:2eps}
R_{k}(x) \leq R_n(x)+2\epsilon, \quad \text{for all} \ k \leq n \ \text{and} \ x \in \setR. 
\end{equation}
\end{thm}
\begin{proof}
We first argue that~\eqref{eq:2eps} is equivalent to the assertion
\begin{equation} \label{minklmaxSL}
\text{There is }m\in I\ \text{such that}\
R_{k}^{\min}(x; m, \epsilon) \leq R_n(x)+\epsilon, \quad \text{for all} \ k \leq n \ \text{and} \ x \in \setR,
\end{equation}
where $R_k^{\min}$ denotes the call function of $S(\mu_k; m, \epsilon)$.
Indeed, by~\eqref{eq:sl min}, \eqref{minklmaxSL} clearly implies~\eqref{eq:2eps},
and the converse implication follows from the obvious estimate
$(m-x)^+\leq R_n(x)+\epsilon$, valid for arbitrary $m\in I$.

Now suppose that~\eqref{minklmaxSL} holds. We will define the measures $\nu_n$ via their call functions $P_n$. Define $P_1(x)=R_1^{\min}(x;m, \epsilon)$ and
\begin{equation}\label{eq:slP}
  P_n(x)= \max \bigl\{P_{n-1}(x), R_n^{\min}(x;m,\epsilon) \bigr\}, \quad n\geq 2.
\end{equation} 
It is easily verified that $P_n$ is a call function  and satisfies 
\begin{equation} \label{addsl1}
  R_n^{\min}(x) \leq P_n(x) \leq R_n(x)+\epsilon, \quad x \in \setR,
\end{equation} 
and therefore $\nu_n$, the measure associated to $P_n$, satisfies $\nu_n \in B^{\mathrm{SL}}(\mu_n, \epsilon)$. 
Furthermore $P_n \leq P_{n+1}$, and thus  $(\nu_n)_{n \in \setN}$ is a peacock with mean $m$.

Now assume that $(\nu_n)_{n \in \setN}$ is a peacock such that $d^{\mathrm{SL}}(\mu_n, \nu_n) \leq \epsilon$. We will denote the call function of $\nu_n$ by $P_n$ and set $m=\mathbb{E}\nu_1 \in I$.
Then for $k\leq n$ and $x \in \setR$ we get with Proposition~\ref{LSdsl}
$$
R_k^{\min}(x;m, \epsilon) \leq P_k(x) \leq P_n(x) \leq R_n(x)+\epsilon. 
$$
\end{proof}
Note that~\eqref{minklmaxSL} trivially holds for $k=n$. Moreover,
unwinding the recursive definition~\eqref{eq:slP} and using~\eqref{eq:sl min}, we see
that~$P_n$ has the explicit expression
\[
  P_n(x) = \max\{(m-x)^+,R_1(x)-\epsilon,\dots, R_n(x)-\epsilon\}, \quad x\in\setR, n\in \setN.
\]
The following proposition shows that the peacock from Theorem~\ref{mainsl} is never unique.
\begin{prop}
  In the setting of Theorem~\ref{mainsl}, suppose that~\eqref{minklmaxSL} holds. Then
  there are infinitely many peacocks satisfying~\eqref{eq:sl d}.
\end{prop}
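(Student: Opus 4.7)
The plan is to show that the set $\Pi$ of peacocks $(\nu_n)_{n\in\setN}$ satisfying \eqref{eq:sl d} is convex as a subset of $\setM^\setN$, so that producing any two distinct members of $\Pi$ yields a whole one-parameter family of peacocks, hence infinitely many. Convexity is routine: if $(\nu_n^{(1)}), (\nu_n^{(2)}) \in \Pi$ and $\lambda \in [0,1]$, then $(\lambda\nu_n^{(1)} + (1-\lambda)\nu_n^{(2)})$ has constant mean, non-decreasing (in $n$) call functions by linearity of $R$ in the measure, and $d^{\mathrm{SL}}$-distance from $\mu_n$ bounded by $\lambda\epsilon + (1-\lambda)\epsilon = \epsilon$.

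The task thus reduces to exhibiting two distinct peacocks in $\Pi$. Observing that \eqref{minklmaxSL} decomposes into $(m-x)^+ \leq R_n(x) + \epsilon$ for all $n, x$ (which, since $R_n(x) + x$ is non-decreasing with limit $\mathbb{E}\mu_n$ at $-\infty$, is equivalent to $m \leq \mathbb{E}\mu_n + \epsilon$, i.e.\ the upper half of $m \in I$) and the $m$-independent condition $R_k(x) \leq R_n(x) + 2\epsilon$ for $k \leq n, x \in \setR$, we see that \eqref{minklmaxSL} holds for some $m \in I$ iff it holds for every $m \in I$. Hence Theorem~\ref{mainsl} produces a peacock of mean $m$ for each $m \in I$, and if $|I| > 1$, two distinct choices of $m \in I$ yield two peacocks of distinct means, finishing this case.

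In the remaining case $I = \{m\}$, I would build a second peacock by incorporating an additional term in the max defining the call functions. For $a > 0$, set $\theta_a = \tfrac{a}{a+\epsilon}\delta_{m-\epsilon} + \tfrac{\epsilon}{a+\epsilon}\delta_{m+a} \in \setM_m$; a direct computation yields $R_{\theta_a}(x) = m - x$ on $(-\infty, m-\epsilon]$, $R_{\theta_a}(x) = \tfrac{\epsilon(m+a-x)}{a+\epsilon} \in [0, \epsilon]$ on $[m-\epsilon, m+a]$, and $R_{\theta_a} = 0$ on $[m+a, \infty)$. The pointwise inequality $R_{\theta_a} \leq R_n + \epsilon$ then follows from $(m-x)^+ \leq R_n(x) + \epsilon$ on the left tail and from $R_n \geq 0$ elsewhere. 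Setting
\[
  \tilde P_n(x) = \max\{(m-x)^+, R_1(x)-\epsilon, \ldots, R_n(x)-\epsilon, R_{\theta_a}(x)\}
\]
and arguing as in the proof of Theorem~\ref{mainsl}, the associated measures $\tilde\nu_n$ form a peacock in $\Pi$.

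The main obstacle is to ensure that $(\tilde\nu_n) \neq (\nu_n^{\min})$, that is, $R_{\theta_a} > P_n$ at some $x$ for some $n$. For any fixed $n_0$, since each $R_k$ has $R_k(x) \to 0$ as $x \to \infty$, we can choose $a$ large enough that $R_k(m+a) < \epsilon$ for all $k \leq n_0$; then near $x = m + a$ we have $(m-x)^+ = 0$ and every $R_k(x) - \epsilon < 0$ for $k \leq n_0$, while $R_{\theta_a}(x) > 0$, giving $\tilde P_{n_0} > P_{n_0}$ near $m + a$ and hence a peacock distinct from the minimal one.
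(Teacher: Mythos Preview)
Your proof is correct, and it takes a genuinely different route from the paper's.

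The paper proceeds more directly: it fixes a point $x_0$ with $0<P_1(x_0)<\epsilon$, and for each $c\in(0,1)$ defines a piecewise-affine function $G_c$ with $G_c(x_0)=P_1(x_0)$ and right-hand slope $cP_1'(x_0)$ (so $G_c$ lies above $P_1$ just to the right of $x_0$ and satisfies $G_c\leq\epsilon$). Setting $\tilde P_n^{(c)}=P_n\vee G_c$ produces, for each $c$, a peacock in $B^{\mathrm{SL}}(\mu_n,\epsilon)$, and varying $c$ yields an explicit one-parameter family of distinct peacocks.

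Your argument instead observes the convexity of the set of admissible peacocks, reducing the task to finding two distinct members; you then either vary the mean over $I$ (nicely noting that condition~\eqref{minklmaxSL} is in fact valid for every $m\in I$ once it holds for one), or, when $I$ is a singleton, perturb $P_n$ by the call function of a two-point measure $\theta_a$. The second construction is close in spirit to the paper's (both take a pointwise maximum with an auxiliary function bounded by $\epsilon$), but the specific perturbation differs. Your approach has the pleasant by-product of exhibiting peacocks with \emph{different means} when $|I|>1$, something the paper's construction does not do; on the other hand, the paper avoids any case distinction. Note finally that your $\theta_a$-construction works for every $m\in I$, so the case split, while not wrong, is not strictly needed.
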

\begin{proof}
  Define~$P_n$ as in the proof of Theorem~\ref{mainsl}, and fix $x_0\in\setR$ with
  $P_1(x_0)<\epsilon$. For arbitrary $c\in(0,1)$, we define
  \[
    G(x) =
    \begin{cases}
      P_1(x_0), & x\leq x_0, \\
      P_1(x_0) + c P_1'(x_0)(x-x_0), & x\geq x_0.
    \end{cases}
  \]
  Thus, in a right neighborhood of~$x_0$, the graph of~$G$ is a line that lies above~$P_1$.
  We then put $\tilde{P}_n = P_n \vee G$, for $n\in\setN$. It is easy to
  see that $(\tilde{P}_n)$ is an increasing sequence of call functions with mean~$m$,
  and thus defines a peacock. Moreover, we have
  \[
    \tilde{P}_n \leq (R_n+\epsilon) \vee G \leq R_n+\epsilon,
  \]
  by~\eqref{addsl1} and the fact that $G\leq \epsilon$.
  The lower estimate $\tilde{P}_n \geq P_n\geq R_n-\epsilon$ is also obvious.
\end{proof}
Theorem~\ref{mainsl} easily extends to continuous index sets. 
\begin{thm}\label{thm:sl01}
Assume that $(\mu_t)_{t \in [0,1]}$ is a family of measures in $\setM$ such that 
$$I:=\bigcap_{t \in [0,1]}[\mathbb{E}\mu_t-\epsilon, \mathbb{E}\mu_t+\epsilon]$$ 
is not empty. Denote the call function of $\mu_t$ by $R_t$.
Then there exists a peacock $(\nu_t)_{t \in [0,1]}$ with 
\[
  d^{\mathrm{SL}}\bigl(\mu_t, \nu_t\bigr) \leq \epsilon,  \quad \mbox{for all} \ t \in [0,1],
\]
if and only if
\begin{equation}\label{eq:2eps cont}
  R_{s}(x) \leq R_t(x)+2\epsilon, \quad \text{for all} \ 0 \leq s<t \leq 1 \ \text{and} \ x \in \setR. 
\end{equation}
\end{thm}
\begin{proof}
As in the discrete case (Theorem~\ref{mainsl}), \eqref{eq:2eps cont}
is equivalent to the statement
\begin{equation} \label{minklmaxSLcont}
\text{There is}\ m\in I\ \text{such that}\ R_{s}^{\min}(x; m, \epsilon) \leq R_t(x)+\epsilon, \quad \text{for all}  \ 0 \leq s<t \leq 1\ \text{and}\ x \in \setR. 
\end{equation}
If \eqref{minklmaxSLcont} holds, then we set 
\[
  P_t(x)= \sup_{s \leq t} R_s^{\min}(x;m,\epsilon) , \quad t \in [0,1].
\]
Then $P_t$ is a call function which satisfies $R_t^{\min}(x;m,\epsilon) \leq P_t(x) \leq  R_t(x)+\epsilon$
for $x\in\setR$. The rest can be done as in the proof of Theorem~\ref{mainsl}.
\end{proof}

\section{L\'evy distance and Prokhorov distance: preliminaries} \label{sec:LP}

The L\'evy distance is a metric on the set of all measures on $\setR$, defined as
\[
   d^\mathrm{L}(\mu,\nu)=\inf \Bigl\{h >0: F_\mu(x-h)-h \leq F_\nu(x) \leq F_\mu(x+h)+h,\ \forall x \in \setR \Bigr\}.
\]
Its importance is partially due to the fact that~$d^\mathrm{L}$ metrizes weak
convergence of measures on~$\setR$. 
The Prokhorov distance is a metric on measures on an arbitrary separable metric space $(S, \rho)$. For measures $\mu, \nu$ on $S$ it can be written as
\[
   d^\mathrm{P}(\mu,\nu)=\inf \Bigl\{h >0: \nu(A) \leq \mu(A^h)+h, \ \text{for all closed sets} \ A \subseteq S \Bigr\},
\]
where $A^h= \bigl\{x \in S: \inf_{a \in A} \rho(x,a) \leq h \bigr\}$. 
The Prokhorov distance is often referred to as a generalization of the L\'evy metric, since $d^\mathrm{P}$ metrizes weak convergence on any separable metric space. 
Note, though, that $d^\mathrm{L}$ and $d^\mathrm{P}$ do not coincide when $(S,\rho)=(\setR, |\:.\:|)$.
%
%
It is easy to see (\cite{HuRo09}, p.~36) that the Prokhorov distance of two measures on $\setR$ is an upper bound for their L\'evy distance:
\begin{lem} \label{lem:levyprokohorov}
   Let $\mu$ and $\nu$ be two probability measures on $\setR$. Then $d^\mathrm{L}(\mu,\nu) \leq d^\mathrm{P}(\mu,\nu)$.
\end{lem}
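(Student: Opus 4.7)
The plan is to test the Prokhorov condition against closed half-lines and directly read off the Lévy condition. Fix any $h > d^\mathrm{P}(\mu,\nu)$; by the definition of $d^\mathrm{P}$ we then have $\nu(A) \leq \mu(A^h) + h$ for every closed $A \subseteq \setR$, and by the (well-known) symmetry of the Prokhorov distance on probability measures we also have $\mu(A) \leq \nu(A^h) + h$ for every closed~$A$.

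For the right-hand Lévy inequality, fix $x\in\setR$ and take $A = (-\infty,x]$, which is closed with $A^h = (-\infty,x+h]$. Then
\[
  F_\nu(x) = \nu(A) \leq \mu(A^h) + h = F_\mu(x+h) + h.
\]
For the left-hand Lévy inequality, take instead $A' = (-\infty, x-h]$, which is closed with $(A')^h = (-\infty,x]$, and apply the swapped estimate:
\[
  F_\mu(x-h) = \mu(A') \leq \nu\bigl((A')^h\bigr) + h = F_\nu(x) + h,
\]
i.e.\ $F_\mu(x-h) - h \leq F_\nu(x)$. Combining both bounds gives
\[
  F_\mu(x-h) - h \leq F_\nu(x) \leq F_\mu(x+h) + h, \qquad x\in\setR,
\]
so $h$ is admissible in the defining infimum of $d^\mathrm{L}(\mu,\nu)$, hence $d^\mathrm{L}(\mu,\nu) \leq h$. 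Letting $h\downarrow d^\mathrm{P}(\mu,\nu)$ finishes the proof.

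There is no real obstacle here; the only mild subtlety is that the paper states the Prokhorov distance via the one-sided condition $\nu(A)\leq \mu(A^h)+h$, so one should either explicitly note that for probability measures this condition is symmetric in $(\mu,\nu)$, or simply observe that the symmetric version is the one needed and is standard (see, e.g., \cite{HuRo09}). Everything else is an immediate specialisation to half-lines.
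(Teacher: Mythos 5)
Your proof is correct and follows essentially the same route as the paper: both apply the Prokhorov condition to the closed half-lines $(-\infty,x]$ (the paper uses $\epsilon+\tfrac1n$ where you use $h>d^\mathrm{P}(\mu,\nu)$, which is the same device) and both invoke the symmetry of $d^\mathrm{P}$ for the left-hand Lévy inequality. Nothing to add.
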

For further information concerning these metrics, their properties and their relations to other metrics,
we refer the reader to \cite{HuRo09}~(p.27~ff).
Now we define slightly different distances $d_p^\mathrm{L}$ and $d_p^\mathrm{P}$ on the set of probability measures on~$\setR$, which in general are not metrics in the classical sense
(recall the remark after Definition~\ref{infwassersteindistance}).
These distances are useful for two reasons: 
 First, it will turn out that balls with respect to $d^\mathrm{L}$ and $d^\mathrm{P}$ can always
 be written as balls w.r.t.\ $d_p^\mathrm{L}$ and $d_p^\mathrm{P}$, see Lemma \ref{lem:eqballs}. Second, the function $d_p^\mathrm{P}$ has a direct link to minimal distance couplings which are especially useful for applications, see Proposition \ref{prop:DudleyStrassen}.
For $p \in [0,1]$ we define 
\begin{equation} \label{pmetrik}
   d_p^\mathrm{L}(\mu,\nu):= \inf\Bigl\{h > 0: \ F_\mu(x-h) -p \leq F_\nu(x) \leq F_\mu(x+h)+p, \forall x \in \setR \Bigr\}
\end{equation}
and 
\begin{equation} \label{pmetrik2}
   d_p^\mathrm{P}(\mu,\nu):= \inf \Bigl\{h >0: \nu(A) \leq \mu(A^h)+p, \ \text{for all closed sets} \ A \subseteq S \Bigr\}.
\end{equation}
It is easy to show (using complements) that $d_p^\mathrm{P}(\mu,\nu)=d_p^\mathrm{P}(\nu,\mu)$ (see e.g.~Proposition~1 in~\cite{Du68}). 
Note that $d_p^\mathrm{P}(\mu, \nu)=0$ does not imply that $\mu=\nu$. 
We will refer to~$d_p^\mathrm{L}$ as the modified L\'evy distance, and to~$d_p^\mathrm{P}$ as the modified Prokhorov distance.\footnote{Note that our definition of the modified Prokhorov distance does \emph{not} agree with
the Prokhorov-type metric $\pi_\lambda$ from~\cite{Ra91} and~\cite{RaRuSc92}.}
The following Lemma explains the connection between the L\'evy distance $d^\mathrm{L}$ and the modified L\'evy distance  $d_p^\mathrm{L}$,
resp.\ the Prokhorov distance $d^\mathrm{P}$ and the modified  Prokhorov distance $d_p^\mathrm{P}$.

\begin{lem} \label{lem:eqballs}
   Let $\mu \in \setM$. Then for every $\epsilon \in [0,1]$ we have 
   \[
     B^\mathrm{L}(\mu,\epsilon)=B_\epsilon^\mathrm{L}(\mu,\epsilon) \quad \text{and} \quad  B^\mathrm{P}(\mu,\epsilon)=B_\epsilon^\mathrm{P}(\mu,\epsilon).
   \]
\end{lem}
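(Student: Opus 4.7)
The plan is to prove each of the two equalities $B^\mathrm{L}(\mu,\epsilon)=B_\epsilon^\mathrm{L}(\mu,\epsilon)$ and $B^\mathrm{P}(\mu,\epsilon)=B_\epsilon^\mathrm{P}(\mu,\epsilon)$ by double inclusion. In each case one direction will be free and the other will need a short limiting argument. The common pattern is: move from ``the same number $h$ controls both the shift and the tolerance'' to ``the shift $\epsilon$ and the tolerance $\epsilon$ separately'' (or back), by inserting a tiny slack $\delta>0$ and sending $\delta\downarrow 0$.

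For the Lévy identity, the inclusion $B_\epsilon^\mathrm{L}(\mu,\epsilon)\subseteq B^\mathrm{L}(\mu,\epsilon)$ is immediate, since whenever $h>\epsilon$ the sandwich $F_\mu(x-h)-\epsilon\leq F_\nu(x)\leq F_\mu(x+h)+\epsilon$ appearing in the definition of $d_\epsilon^\mathrm{L}$ is at least as strong as the one $F_\mu(x-h)-h\leq F_\nu(x)\leq F_\mu(x+h)+h$ defining $d^\mathrm{L}$. For the reverse inclusion, assuming $d^\mathrm{L}(\mu,\nu)\leq\epsilon$, I would handle the two sides separately. The upper bound $F_\nu(x)\leq F_\mu(x+\epsilon)+\epsilon$ follows by applying the defining inequality at $x$ with $h=\epsilon+\delta$ and letting $\delta\downarrow 0$, invoking right-continuity of $F_\mu$ at $x+\epsilon$. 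The lower bound is the trickier one: naively sending $h\downarrow\epsilon$ yields only $F_\mu((x-\epsilon)-)-\epsilon\leq F_\nu(x)$, which is strictly weaker at a jump of $F_\mu$. The fix is to shift the evaluation point together with the tolerance: apply the inequality at $x+\delta$ with $h=\epsilon+\delta$ to get $F_\mu(x-\epsilon)-\epsilon-\delta\leq F_\nu(x+\delta)$ and send $\delta\downarrow 0$, now using right-continuity of $F_\nu$ at $x$.

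For the Prokhorov identity I would follow the same blueprint, using the nesting $A^{h_1}\subseteq A^{h_2}$ for closed sets $A$ and $h_1\leq h_2$ in place of monotonicity of $F_\mu$. The inclusion $B_\epsilon^\mathrm{P}(\mu,\epsilon)\subseteq B^\mathrm{P}(\mu,\epsilon)$ again follows at once from $\epsilon\leq h$, where the symmetry of both distances in $\mu,\nu$ (mentioned just after the definition of $d_p^\mathrm{P}$) is needed to get a two-sided estimate. For the converse, starting from $d^\mathrm{P}(\mu,\nu)\leq\epsilon$ and given any $h>\epsilon$, I would pick $\delta\in(0,h-\epsilon)$, obtain $\nu(A)\leq \mu(A^{\epsilon+\delta})+\epsilon+\delta\leq \mu(A^h)+\epsilon+\delta$ for every closed $A$, and let $\delta\downarrow 0$ to conclude $\nu(A)\leq \mu(A^h)+\epsilon$; symmetry yields the same with $\mu$ and $\nu$ swapped.

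The main obstacle is the lower Lévy bound described above: the simultaneous-shift trick is mild but indispensable, because at a jump point of $F_\mu$ one cannot afford to replace $F_\mu(x-\epsilon)$ by its left limit. Everything else is just rewriting the defining inequalities and passing $\delta\downarrow 0$.
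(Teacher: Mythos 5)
Your proof is correct, and the overall blueprint---double inclusion, with the nontrivial direction obtained by inserting a slack $\delta>0$ and sending $\delta\downarrow 0$---is the same as the paper's; the details differ in two places worth noting. First, for the Prokhorov converse the paper shrinks the enlarged set, using $A^{\epsilon+\delta}\downarrow A^{\epsilon}$ and continuity from above of the measure to reach the endpoint inequality $\mu(A)\leq\nu(A^{\epsilon})+\epsilon$; you instead keep $A^{h}$ fixed for $h>\epsilon$ and only shrink the additive tolerance, which makes the limit trivial and avoids any appeal to continuity of the measure---a genuine, if small, simplification. Second, the paper disposes of the L\'evy identity in one line by specializing the closed sets to half-lines $A=(-\infty,x]$, whereas you argue directly on distribution functions. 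Your ``simultaneous shift'' trick for the lower bound is correct, but it is only needed because you aim at the endpoint inequality $F_\mu(x-\epsilon)-\epsilon\leq F_\nu(x)$: membership in the closed ball $B_\epsilon^{\mathrm{L}}(\mu,\epsilon)$ only requires the sandwich for every $h>\epsilon$ (the admissible set in the infimum is an up-set in $h$), and for fixed $h>\epsilon$ the naive limit $\delta\downarrow 0$ already gives $F_\mu(x-h)-\epsilon\leq F_\nu(x)$ with no jump issue, since $F_\mu(x-h)\leq F_\mu(x-\epsilon-\delta)$ for small $\delta$. So the obstacle you single out as the main difficulty is self-imposed; resolving it merely yields a slightly stronger (and harmless) endpoint form of the estimate.
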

\begin{proof}
   For $\nu\in\setM$, the assertion $\nu\in B^\mathrm{P}(\mu,\epsilon)$ is equivalent to
  \begin{equation}\label{eq:P}
     \mu(A) \leq \nu\bigl(A^{\epsilon+\delta}\bigr)+\epsilon+\delta, \quad \delta>0,\ A\subseteq \setR \ \text{closed},
  \end{equation}
  whereas $\nu\in B_{\epsilon}^\mathrm{P}(\mu,\epsilon)$ means that
  \begin{equation} \label{eq:P eps}
     \mu(A) \leq \nu\bigl(A^{\epsilon+\delta}\bigr)+\epsilon, \quad \delta>0,\ A\subseteq \setR \ \text{closed}.
  \end{equation}
   Obviously, \eqref{eq:P eps} implies~\eqref{eq:P}. Now suppose that~\eqref{eq:P} holds,
  and let $\delta\downarrow0$. Notice that $A^{\epsilon+\delta_1} \subseteq A^{\epsilon+\delta_2}$ for $\delta_1 \leq \delta_2$. The continuity of $\nu$ then gives
  \begin{equation*} 
     \mu(A) \leq \nu\bigl(A^{\epsilon}\bigr)+\epsilon \leq \nu\bigl(A^{\epsilon+\delta}\bigr)+\epsilon \quad \delta>0,\ A\subseteq \setR \ \text{closed},
  \end{equation*}
  and thus $B^\mathrm{P}(\mu,\epsilon)=B_\epsilon^\mathrm{P}(\mu,\epsilon)$. 
  Replacing $A$ by intervals $(-\infty,x]$ for $x \in \setR$ in~\eqref{eq:P} and~\eqref{eq:P eps}
	proves that $B^\mathrm{L}(\mu,\epsilon)=B_\epsilon^\mathrm{L}(\mu,\epsilon)$.

\end{proof}

Similarly to Lemma \ref{lem:levyprokohorov} we can show that the modified L\'evy distance of two measures never exceeds the modified Prokhorov distance.
\begin{lem} \label{lem:levyprokohorov_p}
   Let $\mu$ and $\nu$ be two probability measures on $\setR$ and let $p \in [0,1]$. Then
   \[
      d_p^\mathrm{L}(\mu,\nu) \leq d_p^\mathrm{P}(\mu,\nu).
   \]
\end{lem}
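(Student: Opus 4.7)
The plan is to mimic the proof of Lemma~\ref{lem:levyprokohorov}, replacing the additive error $\epsilon$ on the right-hand side by the modification parameter~$p$, since the roles of the two quantities in $d_p^\mathrm{P}$ and $d_p^\mathrm{L}$ are precisely analogous. Concretely, I would set $\epsilon := d_p^\mathrm{P}(\mu,\nu)$ (assuming this is finite, else there is nothing to prove) and verify the two-sided L\'evy-type inequality by specialising the Prokhorov inequality to closed half-lines.

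First, I would note that by definition of $d_p^\mathrm{P}$, for every $\delta>0$ and every closed $A \subseteq \setR$ one has $\nu(A) \leq \mu(A^{\epsilon+\delta}) + p$, and by symmetry of $d_p^\mathrm{P}$ (mentioned just after~\eqref{pmetrik2}) also $\mu(A) \leq \nu(A^{\epsilon+\delta}) + p$. Next, I would fix $x \in \setR$ and apply the first inequality with $A = (-\infty,x]$, whose $(\epsilon+\delta)$-neighbourhood in $(\setR,|\cdot|)$ is exactly $(-\infty,x+\epsilon+\delta]$. This yields
\[
  F_\nu(x) \leq F_\mu(x+\epsilon+\delta) + p.
\]
Applying the symmetric inequality with $A = (-\infty,x-\epsilon-\delta]$ (so that $A^{\epsilon+\delta} = (-\infty,x]$) gives
\[
  F_\mu(x-\epsilon-\delta) - p \leq F_\nu(x).
\]

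Finally, combining the two bounds, for every $h>\epsilon$ (choosing $\delta := h-\epsilon >0$) and every $x \in \setR$ one has $F_\mu(x-h) - p \leq F_\nu(x) \leq F_\mu(x+h) + p$, which by definition~\eqref{pmetrik} gives $d_p^\mathrm{L}(\mu,\nu) \leq h$. Letting $h \downarrow \epsilon$ completes the proof. There is no real obstacle here: the only thing to be slightly careful about is that the defining infimum in~\eqref{pmetrik2} forces us to work with $\epsilon+\delta$ rather than $\epsilon$ itself, but this is harmless and is dealt with exactly as in the proof of Lemma~\ref{lem:levyprokohorov}.
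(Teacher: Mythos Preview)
Your proof is correct and follows essentially the same approach as the paper: set $\epsilon = d_p^\mathrm{P}(\mu,\nu)$, apply the Prokhorov inequality to closed half-lines $(-\infty,x]$ to obtain the L\'evy-type bounds with an extra slack $\delta$ (the paper uses $\tfrac{1}{n}$), invoke symmetry of $d_p^\mathrm{P}$ for the reverse inequality, and let the slack tend to zero.
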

\begin{proof}
   We set $\epsilon=d_p^\mathrm{P}(\mu,\nu)$. Then for any $x \in \setR$ and all $n \in \setN$ we have 
   \begin{align*}
     F_\nu(x) = \nu\bigl((-\infty,x]\bigr) 
     &\leq \mu\Big(\Big(-\infty,x+\epsilon+\frac 1n\Big]\Big)+p \\ 
     &= F_\mu\Big(x+\epsilon+ \frac 1n \Big)+p,
   \end{align*}
   and by the symmetry of $d^\mathrm{P}$ the above relation also holds with $\mu$ and $\nu$ interchanged. This implies that $d_p^\mathrm{L}(\mu,\nu) \leq \epsilon$.
\end{proof}
The following coupling representation of $d_p^\mathrm{P}$
was first proved by Strassen and was then extended by Dudley~\cite{Du68,St65}.
\begin{prop} \label{prop:DudleyStrassen}
   Given measures $\mu, \nu$ on $\setR$, $p \in [0,1]$, and $\epsilon>0$ there exists a probability space~$(\Omega, \mathcal{F}, \pp)$ with random variables $X\sim \mu$ and $Y \sim \nu$ such that 
   \begin{equation} \label{smalldist} 
     \pp\big( \bigl|X-Y|> \epsilon  \big) \leq p,
   \end{equation}
   if and only if
    \begin{equation} 
     d^\mathrm{P}_p(\mu,\nu) \leq \epsilon.
   \end{equation}
\end{prop}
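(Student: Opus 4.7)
The strategy is to handle the two directions separately. The forward implication (from a coupling to the distance bound) is a short direct computation, while the reverse direction is the classical Strassen--Dudley theorem, which I would establish in two stages: first for measures with finite support, via a max-flow/min-cut argument, and then for general measures in $\setM$ by approximation and weak compactness.

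For the forward direction, assume a coupling $(X,Y)$ with $\pp(|X-Y|>\epsilon)\leq p$ is given. For any closed set $A\subseteq\setR$ we have $\{Y\in A,\,|X-Y|\leq\epsilon\}\subseteq\{X\in A^\epsilon\}$, hence
$$
  \nu(A) = \pp(Y\in A) \leq \pp(X\in A^\epsilon) + \pp(|X-Y|>\epsilon) \leq \mu(A^\epsilon)+p,
$$
so $d_p^{\mathrm{P}}(\mu,\nu)\leq\epsilon$ by \eqref{pmetrik2}.

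For the reverse direction in the finite case, suppose $\mu,\nu$ are supported on finite sets $S_1,S_2\subseteq\setR$. I would build a directed network with source~$s$, sink~$t$, intermediate layers $S_1$ and $S_2$, capacity $\mu\{x\}$ on the edge $s\to x$, capacity $\nu\{y\}$ on $y\to t$, and capacity $+\infty$ on $x\to y$ whenever $|x-y|\leq\epsilon$. A feasible $s$-$t$ flow of value at least $1-p$ in this network provides, after topping up with arbitrary mass to reach total mass~$1$, a coupling that places mass at most~$p$ on $\{|x-y|>\epsilon\}$. Any finite cut is determined by the subset $A\subseteq S_2$ of $S_2$-vertices left on the sink side; avoiding infinite-capacity crossings forces $A^\epsilon\cap S_1$ onto the sink side as well, and the resulting cut capacity equals $\mu(A^\epsilon)+\nu(S_2\setminus A)$. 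The inequality $\nu(A)\leq\mu(A^\epsilon)+p$ for every $A\subseteq S_2$ --- which is precisely our assumption --- is thus equivalent to every cut having capacity at least $1-p$, and the max-flow/min-cut theorem delivers the required flow.

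To extend to general $\mu,\nu\in\setM$, I would approximate them by finitely supported measures $\mu_n$ and $\nu_n$ converging weakly to $\mu,\nu$ (for instance via push-forward onto dyadic partitions of mesh $\delta_n\downarrow 0$), and verify that $d_p^\mathrm{P}(\mu_n,\nu_n)\leq\epsilon+\delta_n'$ for suitable $\delta_n'\downarrow0$ inherited from the original bound. The finite case produces couplings $\pi_n$ on $S_1\times S_2$ with $\pi_n(\{|x-y|>\epsilon+\delta_n'\})\leq p$. Tightness of the marginals implies tightness of $(\pi_n)$, and Prokhorov's theorem yields a weakly convergent subsequence with limit $\pi$ whose marginals are $\mu$ and $\nu$. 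Applying the portmanteau theorem to the open set $\{|x-y|>\epsilon\}$ gives $\pi(\{|x-y|>\epsilon\})\leq\liminf_n\pi_n(\{|x-y|>\epsilon+\delta_n'\})\leq p$, as required. The main obstacle is the finite case, specifically arranging the flow network so that finite cuts translate cleanly into the Prokhorov-type inequality; the weak-compactness step is then routine.
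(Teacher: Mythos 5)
The paper does not actually prove this proposition: it is quoted as the classical Strassen--Dudley theorem, with the proof delegated to the references \cite{St65,Du68}. Your proposal therefore does more than the paper, and what you reconstruct is essentially the standard argument from that literature: the forward direction by the inclusion $\{Y\in A,\ |X-Y|\leq\epsilon\}\subseteq\{X\in A^\epsilon\}$, the converse by max-flow/min-cut (equivalently, Hall's marriage theorem, which is what Dudley uses) in the finitely supported case, followed by discretization, tightness, and Prokhorov's theorem. Your cut computation is right: a finite-capacity cut is forced to place $A^\epsilon\cap S_1$ on the sink side whenever $A\subseteq S_2$ is on the sink side, the cut value is $\mu(A^\epsilon)+1-\nu(A)$, and the hypothesis $\nu(A)\leq\mu(A^\epsilon)+p$ is exactly the statement that every cut has capacity at least $1-p$; topping up the resulting sub-coupling of mass $\geq 1-p$ with an arbitrary coupling of the residual marginals gives the claim. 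Two small points deserve attention. First, since $d_p^{\mathrm{P}}$ is defined as an infimum, $d_p^{\mathrm{P}}(\mu,\nu)\leq\epsilon$ only gives $\nu(A)\leq\mu(A^{\epsilon+\delta})+p$ for every $\delta>0$; you absorb this correctly into the $\delta_n'$, and one could alternatively pass to $h=\epsilon$ directly by the monotone-intersection argument the paper uses in Lemma~\ref{lem:eqballs}. Second, the displayed inequality $\pi(\{|x-y|>\epsilon\})\leq\liminf_n\pi_n(\{|x-y|>\epsilon+\delta_n'\})$ is not literally an instance of the portmanteau theorem, because the sets on the right are smaller than the set on the left; the correct route is to fix $\eta>0$, note that for $n$ large $\pi_n(\{|x-y|>\epsilon+\eta\})\leq\pi_n(\{|x-y|>\epsilon+\delta_n'\})\leq p$, apply portmanteau to the open set $\{|x-y|>\epsilon+\eta\}$ to get $\pi(\{|x-y|>\epsilon+\eta\})\leq p$, and then let $\eta\downarrow0$ using continuity of measure. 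With that repair the argument is complete and correct.
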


\section{Approximation by peacocks: Prokhorov distance and L\'evy distance}\label{se:thm P L}

In this section we will prove peacock approximation results, first for the modified Prokhorov distance 
and later on for the modified L\'evy distance, the Prokhorov distance, and the L\'evy distance. 
It turns out that Problem~\ref{pr:main} always has a solution for these distances,
regardless of the size of~$\epsilon$.
In the following we denote the quantile function of a measure $\mu \in \setM$ by $G_\mu$, i.e.
\[
   G_\mu(p)=\inf\left\{ x\in \setR: \: F_\mu(x) \geq p \right\}, \quad p \in [0,1].
\]

\begin{prop} \label{prop:dppempty}
   Let $\mu \in \setM$, $p \in (0,1]$, and $m \in \setR$. Then the set 
   \[
   B_p^{\mathrm{P}}(\mu,0)\cap\setM_m 
   \]
   is not empty. Moreover, this set contains at least one measure with bounded support.
\end{prop}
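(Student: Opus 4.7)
The plan is to apply Proposition~\ref{prop:DudleyStrassen} in its limiting form: if one can build $\nu\in\setM_m$ with bounded support together with a coupling $(X,Y)$ of $\mu$ and $\nu$ satisfying $\pp(X\neq Y)\leq p$, then $\pp(|X-Y|>\epsilon)\leq p$ for every $\epsilon>0$, so $d_p^{\mathrm{P}}(\mu,\nu)\leq\epsilon$ for every $\epsilon>0$, and consequently $d_p^{\mathrm{P}}(\mu,\nu)=0$. That places $\nu$ in $B_p^{\mathrm{P}}(\mu,0)$, as required.

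To construct the coupling I would work on a probability space carrying $X\sim\mu$ together with an independent $U\sim\mathrm{Unif}[0,1]$. Since $\mu$ is a probability measure, one can choose $N>0$ large enough that $r:=\mu([-N,N])\geq 1-p$; in particular $r>0$ whenever $p<1$. Define the event
\[
  E := \{|X|\leq N\}\cap\{U\leq (1-p)/r\},
\]
so that $\pp(E)=1-p$ by independence. Since $|X\mathbf{1}_E|\leq N$, the integral $\mathbb{E}[X\mathbf{1}_E]$ is finite, and hence $a:=(m-\mathbb{E}[X\mathbf{1}_E])/p$ is a well-defined real number. Put $Y:=X\mathbf{1}_E+a\mathbf{1}_{E^c}$ and let $\nu$ denote its law. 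A direct computation yields $\mathbb{E}\nu=\mathbb{E}[X\mathbf{1}_E]+pa=m$, $\nu$ is supported on the bounded set $[-N,N]\cup\{a\}$, and the coupling $(X,Y)$ satisfies $\pp(X\neq Y)\leq\pp(E^c)=p$; this closes the argument.

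The boundary case $p=1$ fits the same recipe without modification: $(1-p)/r=0$ forces $E=\emptyset$ a.s., which gives $Y=a=m$ a.s., i.e.\ $\nu=\delta_m$, and one verifies $d_1^{\mathrm{P}}(\mu,\delta_m)=0$ directly from the defining inequality $\nu(A)\leq\mu(A^h)+1$. I do not expect any substantive obstacle; the proof is essentially a short bookkeeping exercise that exploits the Dudley--Strassen equivalence together with the freedom the slack $p>0$ affords to reassign a small portion of mass to a single point and thereby fix the mean at the prescribed value $m$.
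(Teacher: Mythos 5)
Your proof is correct, and the measure you end up with is essentially the same object as in the paper's proof: both arguments truncate $\mu$ to a bounded set, remove at most a $p$-fraction of the mass, and reintroduce that mass as a single atom placed so that the mean becomes $m$ (the paper truncates at the $p/4$ and $1-p/4$ quantiles and mixes $(1-\tfrac p2)\eta+\tfrac p2\delta_w$; you thin uniformly down to retained mass $1-p$ and add $p\,\delta_a$). Where you genuinely diverge is in the verification. The paper checks the defining inequality $\theta(A)\leq\mu(A)+p$ for closed $A$ directly from the construction, whereas you package the construction as an explicit coupling with $\pp(X\neq Y)\leq p$ and invoke Proposition~\ref{prop:DudleyStrassen} for every $\epsilon>0$ to conclude $d_p^{\mathrm{P}}(\mu,\nu)=0$. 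That is legitimate --- you only need the easy implication of the Dudley--Strassen equivalence, which for your coupling reduces to $\nu(A)=\pp(Y\in A)\leq\pp(X\in A)+\pp(X\neq Y)\leq\mu(A)+p$ --- but it makes the proof lean on a result the paper states without proof; since the law of your $Y$ is $\nu=\tfrac{1-p}{r}\,\mu(\cdot\cap[-N,N])+p\,\delta_a$, the one-line direct check $\nu(A)\leq\mu(A)+p$ (using $\tfrac{1-p}{r}\leq1$) would keep the argument self-contained. All of your bookkeeping (the choice of $N$ with $r\geq1-p$, $\pp(E)=1-p$ by independence, finiteness of $a$, the mean computation, boundedness of the support, and the degenerate case $p=1$ yielding $\nu=\delta_m$) checks out.
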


\begin{proof}
   The statement is clear for $p=1$, and so so we focus on $p \in (0,1)$.    
   Given a measure $\mu$ we set $I=[G_\mu\bigl(\frac p4 \bigr), G_\mu\bigl(1- \frac p4 \bigr))$. We will first define a measure~$\eta$ 
   with bounded support which lies in $B_p^{\mathrm{P}}(\mu,0)$, and then we will modify it to obtain a measure~$\theta$ with mean~$m$. 
   We set
   \[
     F_\eta(x):= \begin{cases} 
        0, & x < G_\mu\bigl(\frac p4 \bigr),  \\
        F_\mu(x), & x \in I,\\
        1, & x \geq G_\mu\bigl(1- \frac p4 \bigr),
     \end{cases}
   \]
   which is clearly a distribution function of a measure $\eta$. Note that $\eta$ has bounded support, 
   so in particular~$\eta$ has finite mean. Next we define
   \[
     \theta= \Bigl(1-\frac p2 \Bigr) \eta + \frac p2 \delta_w,
   \]
   where $w$ is chosen such that $\mathbb{E}\theta=m$. Since~$\eta$ has bounded support, we can deduce that~$\theta$ also has bounded support.
   Now for every closed set $A \subseteq \setR$ we have
   \begin{align*}
    \theta(A) &\leq \bigl(1-\frac p2 \bigr) \eta(A) + \frac p2 \\
     &\leq \bigl(1-\frac p2 \bigr) \eta\bigl(A \cap \mathrm{int}(I)\bigr) + p  \\
     &\leq \mu(A) + p,
   \end{align*}
   where $\mathrm{int}(I)$ denotes the interior of $I$. For the last inequality, note that $\mu$ and $\eta$ are equal on $\mathrm{int}(I)$.
   The last equation implies that $\theta \in B_p^{\mathrm{P}}(\mu,0)\cap\setM_m$. 
\end{proof}

   Note that in Proposition \ref{prop:dppempty} it is not important that $\mu$ has finite mean. The statement is true for all measures on $\setR$.
   The same is true for all subsequent results of this section.

\begin{prop} \label{prop:dpplarge}
   Let $\nu \in \setM$ be a measure with bounded support and $p \in (0,1)$. Then for all measures $\mu \in \setM$ 
   there exists a measure $\theta \in B_p^\mathrm{P}(\mu,0)$ with bounded support such that~$\nu \leqc \theta$.
\end{prop}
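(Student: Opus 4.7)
The plan is to construct $\theta$ as a mixture $\theta = \tilde\mu + (p/2)\tilde\rho$, where $\tilde\mu$ is a sub-probability measure dominated by $\mu$ with bounded support (which supplies the closeness to $\mu$ in the modified Prokhorov distance), and $\tilde\rho$ is a widely spread two-point probability measure $(1-\lambda)\delta_{-L'} + \lambda\delta_{L'}$ (which supplies the dispersion needed to dominate $\nu$ in the convex order).

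Fix $M \geq 1$ with $\mathrm{supp}\,\nu \subseteq [-M, M]$, and choose $L \geq M$ large enough that $\mu([-L, L]) \geq 1 - p/2$. Setting $c := (1 - p/2)/\mu([-L, L]) \in (0, 1]$ and $\tilde\mu := c\cdot \mu|_{[-L, L]}$ gives a sub-probability measure of mass $1 - p/2$ satisfying $\tilde\mu \leq \mu$. For any probability measure $\tilde\rho$ with bounded support, the candidate $\theta := \tilde\mu + (p/2)\tilde\rho$ then satisfies $\theta(A) \leq \tilde\mu(A) + p/2 \leq \mu(A) + p$ for every closed $A \subseteq \setR$, so $\theta \in B_p^\mathrm{P}(\mu, 0)$ and has bounded support. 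To enforce $\mathbb{E}\theta = m := \mathbb{E}\nu$, put $\bar\rho := (m - \int y\, d\tilde\mu)/(p/2)$, pick $L' \geq \max(L, |\bar\rho|)$, and set $\lambda := (L' + \bar\rho)/(2L') \in [0, 1]$, so that $\mathbb{E}\tilde\rho = \bar\rho$ and $\mathbb{E}\theta = m$.

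The main obstacle is then verifying $\nu \leqc \theta$, which by Proposition~\ref{RFprop}~\ref{convexorderR} reduces to the pointwise inequality $R_\theta \geq R_\nu$. A case analysis in $x$ handles this. For $x \leq -L'$ both call functions equal $m - x$ (common mean, bounded supports), and for $x \geq L'$ both vanish. On the annulus $[-L', -L]$, substituting $\bar\rho = (2\lambda - 1)L'$ into $R_\theta(x) = c\int_{[-L,L]} y\, d\mu - x(1 - p/2) + (p/2)\lambda(L' - x)$ and using $R_\nu(x) = m - x$ collapses the difference to $R_\theta(x) - R_\nu(x) = (p/2)(1 - \lambda)(L' + x) \geq 0$. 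Finally, on the central interval $[-L, L']$ the contribution $(p/2)R_{\tilde\rho}(x) = (p/2)\lambda(L' - x)$ grows linearly in $L'$ while $R_\nu$ is uniformly bounded by $m + M$; choosing $L'$ large enough (say $L' \geq L + 8(m + M)/p$) forces $R_\theta \geq R_\nu$ on this interval as well. Combined with $\mathbb{E}\theta = \mathbb{E}\nu$, this yields $\nu \leqc \theta$ and completes the proof.
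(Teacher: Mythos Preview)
Your argument follows the same idea as the paper's: write $\theta$ as a bounded-support piece of mass $1-p/2$ dominated by $\mu$ (giving $\theta(A)\leq\mu(A)+p/2\leq\mu(A)+p$) plus two widely separated point masses of total mass $p/2$ that push $R_\theta$ above $R_\nu$. The paper obtains the central piece from Proposition~\ref{prop:dppempty} (already with mean $m$) and adds \emph{symmetric} point masses at $m\pm n$, then argues nonconstructively that some $n$ works because $R_{\theta_n}(x)\to\infty$; you instead truncate $\mu$ directly and correct the mean with an \emph{asymmetric} two-point measure, then aim for an explicit bound on $L'$. Both routes are fine, and yours is a bit more self-contained.

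One small slip in the final case: on $[-L,-M)$ (nonempty whenever $L>M$) you still have $R_\nu(x)=m-x$, which can be as large as $m+L$, not $m+M$. So the uniform bound on $R_\nu$ over $[-L,L']$ should be $m+L$, and the suggested threshold for $L'$ adjusted accordingly. You should also make explicit that $\lambda=(L'+\bar\rho)/(2L')\to 1/2$, so that, say, $\lambda\geq 1/4$ once $L'\geq 2|\bar\rho|$; this is what ensures $(p/2)\lambda(L'-x)$ really grows linearly in $L'$ uniformly for $x\in[-L,M]$. With these cosmetic fixes the proof is complete.
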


\begin{proof}
   Fix $\mu, \nu \in \setM$ and $p \in (0,1)$, and set $m=\mathbb{E}\nu$. Then, by
   Proposition~\ref{prop:dppempty}, there is a measure 
   $\theta_0 \in B_{p/2}^{\mathrm{P}}(\mu,0)\cap\setM_m$ which has bounded support. For $n \in \setN$ we define
   \[
     \theta_n= \bigl(1-\frac p2\bigr) \theta_0 + \frac p4 \delta_{m-n} + \frac p4 \delta_{m+n}.
   \]
   These measures have bounded support and mean $m$. Furthermore, for $A\subseteq \setR$ closed, we have
   \begin{align*}
     \theta_n(A) &\leq \bigl(1-\frac p2\bigr) \theta_0(A) + \frac p2 \\
      &\leq \theta_0(A)+ \frac p2 \leq \mu(A)+p, \quad n\in\setN,
   \end{align*}
   and hence $\nu_n \in B_p^\mathrm{P}(\mu,0)$ for all $n \in \setN$.
   Now observe that for all $n\in\setN$ and $x \in [m-n,m+n]$ we have 
   \begin{equation}\label{eq:Rninfty}
   R_{\theta_n}(x)= \bigl(1-\frac p2\bigr)R_{\theta_0}(x) + \frac p4 \bigl(m+n-x),
   \end{equation}
   which tends to infinity as~$n$ tends to infinity. 
   Outside of the support of~$\theta_n$ (i.e.\ outside the interval $[m-n,m+n]$) 
   the call function of~$\theta_n$ equals the call function of the Dirac measure~$\delta_m$ with mass at~$m$.
   Therefore there has to exist $n_0 \in \setN$ such that $\nu \leqc \theta_{n_0}$.
\end{proof}

   In Proposition \ref{prop:dpplarge} it is important that $p>0$. For $p=0$ the limit in~\eqref{eq:Rninfty} is finite.

\begin{thm} \label{thm: dppstrassen}
   Let $(\mu_n)_{n \in \setN}$ be a sequence in $\setM$, $\epsilon>0$, and $p \in (0,1]$. Then, for all $m \in \setR$ there exists a peacock $(\nu_n)_{n \in \setN}$ with mean $m$ such that
   \[
     d_p^\mathrm{P}(\mu_n, \nu_n) \leq \epsilon.
   \]
\end{thm}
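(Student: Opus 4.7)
The plan is to construct the peacock inductively, using Propositions \ref{prop:dppempty} and \ref{prop:dpplarge} as the two building blocks. Since $B_p^\mathrm{P}(\mu,0) \subseteq B_p^\mathrm{P}(\mu,\epsilon)$ for $\epsilon>0$, the positivity of $\epsilon$ plays no real role: we will actually obtain the stronger bound $d_p^\mathrm{P}(\mu_n,\nu_n)=0$ for all~$n$.

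First, by Proposition \ref{prop:dppempty} applied to $\mu_1$ and $m$, there is a measure $\nu_1 \in B_p^\mathrm{P}(\mu_1,0)\cap \setM_m$ with bounded support. Then, proceeding by induction, suppose that $\nu_n \in \setM_m$ has been constructed with bounded support and $d_p^\mathrm{P}(\mu_n,\nu_n)=0$. We apply Proposition \ref{prop:dpplarge} with the roles $\mu \mapsto \mu_{n+1}$ and $\nu \mapsto \nu_n$; since~$\nu_n$ has bounded support, the hypothesis is met and we obtain a measure $\nu_{n+1} \in B_p^\mathrm{P}(\mu_{n+1},0)$ with bounded support such that $\nu_n \leqc \nu_{n+1}$. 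The convex order forces $\mathbb{E}\nu_{n+1}=\mathbb{E}\nu_n = m$, so $\nu_{n+1} \in \setM_m$ and the induction hypothesis is preserved.

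The resulting sequence $(\nu_n)_{n\in\setN}$ satisfies $\nu_n \leqc \nu_{n+1}$ for all $n$, hence is a peacock with constant mean $m$, and each $\nu_n$ lies in $B_p^\mathrm{P}(\mu_n,0) \subseteq B_p^\mathrm{P}(\mu_n,\epsilon)$, proving the claim.

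There is essentially no obstacle here beyond the two preparatory propositions: the delicate point -- namely, producing an upper bound in convex order inside an arbitrarily small $d_p^\mathrm{P}$-ball -- was already handled in Proposition \ref{prop:dpplarge} via the perturbation $\theta_n = (1-p/2)\theta_0 + (p/4)\delta_{m-n} + (p/4)\delta_{m+n}$, whose call functions diverge to infinity on bounded intervals. Propagating \emph{bounded support} through the induction is what makes this device reusable at each step, and that is exactly what Proposition \ref{prop:dpplarge} guarantees.
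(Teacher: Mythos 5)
Your argument is essentially identical to the paper's: Proposition~\ref{prop:dppempty} starts the induction, Proposition~\ref{prop:dpplarge} carries it forward, and the preservation of bounded support (and of the mean, via convex order) is exactly the point the paper relies on. The only thing the paper adds is a separate (trivial) treatment of $p=1$, which your induction does not literally cover since Proposition~\ref{prop:dpplarge} is stated only for $p\in(0,1)$.
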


\begin{proof}
   If $p=1$ then $B_p^\mathrm{P}(\mu,0)$ contains all probability measures on $\setR$, which is easily seen from the definition of $d_p^\mathrm{P}$, and the result is trivial.   
   So we consider the case $p<1$. Since $B_p^\mathrm{P}(\mu,0) \subseteq B_p^\mathrm{P}(\mu,\epsilon)$, it suffices to prove the statement for $\epsilon=0$.
   By Proposition~\ref{prop:dppempty}, there exists a measure $\nu_1 \in B_p^\mathrm{P}(\mu_1,0)\cap\setM_m$ with bounded support.
   By Proposition~\ref{prop:dpplarge} there exists a measure $\nu_2 \in B_p^\mathrm{P}(\mu_2,0)$ 
   such that $\nu_1 \leqc \nu_2$. Since $\nu_2$ has again finite support, we can proceed inductively to finish the proof.
\end{proof}

Setting~$\epsilon=p \in (0,1]$ in the previous result, we obtain the following corollary.
\begin{cor} \label{cor:strassendp}
   Let $(\mu_n)_{n \in \setN}$ be a sequence in $\setM$ and $\epsilon > 0$. Then, for all $m \in \setR$ there exists a peacock $(\nu_n)_{n \in \setN}$ with mean $m$ such that
   \[
     d^\mathrm{P}(\mu_n, \nu_n) \leq \epsilon.
   \]
\end{cor}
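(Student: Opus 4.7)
The proof is essentially an immediate translation of Theorem~\ref{thm: dppstrassen} via Lemma~\ref{lem:eqballs}; the only mild subtlety is that Theorem~\ref{thm: dppstrassen} applies to the modified Prokhorov distance with a parameter $p \in (0,1]$, whereas we want a bound on the classical Prokhorov distance, so I need to match parameters.

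The plan is to split according to whether $\epsilon \leq 1$ or $\epsilon > 1$. For $\epsilon > 1$, the Prokhorov distance is automatically bounded by $\epsilon$: taking $h = \epsilon$ in the defining inequality gives $\nu(A) \leq 1 \leq \mu(A^h) + h$ for every closed $A$ and every pair of measures, so the constant peacock $\nu_n = \delta_m$ (or indeed any peacock with mean $m$) trivially satisfies $d^\mathrm{P}(\mu_n, \nu_n) \leq \epsilon$. For the substantive case $\epsilon \in (0,1]$, I would apply Theorem~\ref{thm: dppstrassen} with the parameter choice $p := \epsilon$. This yields a peacock $(\nu_n)_{n \in \setN}$ with mean $m$ such that
\[
  d_\epsilon^\mathrm{P}(\mu_n, \nu_n) \leq \epsilon, \quad n \in \setN,
\]
which by definition means $\nu_n \in B_\epsilon^\mathrm{P}(\mu_n, \epsilon)$ for every $n$. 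Invoking Lemma~\ref{lem:eqballs}, namely $B^\mathrm{P}(\mu_n,\epsilon) = B_\epsilon^\mathrm{P}(\mu_n,\epsilon)$ (valid precisely because $\epsilon \in [0,1]$), I conclude $d^\mathrm{P}(\mu_n,\nu_n) \leq \epsilon$ for all $n$, which is the desired bound.

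There is no genuine obstacle; the work has already been done in Theorem~\ref{thm: dppstrassen} and Lemma~\ref{lem:eqballs}. The only thing to verify is that the parameter matching is legal, i.e.\ that $p = \epsilon$ lies in the admissible range $(0,1]$ of Theorem~\ref{thm: dppstrassen}, which is precisely why the case split at $\epsilon = 1$ is needed.
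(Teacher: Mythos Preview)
Your proof is correct and follows the same approach as the paper: set $p=\epsilon$, apply Theorem~\ref{thm: dppstrassen}, and convert via Lemma~\ref{lem:eqballs}. Your explicit case split at $\epsilon=1$ is a minor improvement in precision over the paper, which tacitly restricts to $\epsilon\in(0,1]$ (cf.\ the sentence preceding the corollary) and leaves the trivial case $\epsilon>1$ implicit.
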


\begin{proof}
   By Lemma \ref{lem:eqballs} we have $B^\mathrm{P}(\mu,\epsilon)=B_\epsilon^\mathrm{P}(\mu,\epsilon)$ for all $\mu \in \setM$ and $\epsilon \in [0,1]$.
   The result now easily follows from Theorem~\ref{thm: dppstrassen}.
\end{proof}

Since balls with respect to the modified Prokhorov metric are smaller than balls with respect to the L\'evy metric, we get the following corollary. 

\begin{thm} \label{thm: dpLstrassen}
   Let $(\mu_n)_{n \in \setN}$ be a sequence in $\setM$, $\epsilon>0$, and $p \in (0,1]$. Then, for all $m \in \setR$ there exists a peacock $(\nu_n)_{n \in \setN}$ with mean $m$ such that
   \[
     d_p^\mathrm{L}(\mu_n, \nu_n) \leq \epsilon.
   \]
   In particular, there exists a peacock $(\nu_n)_{n \in \setN}$ with mean $m$ such that
   \[
     d^\mathrm{L}(\mu_n, \nu_n) \leq \epsilon.
   \]
\end{thm}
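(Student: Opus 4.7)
The plan is to deduce both assertions from the corresponding Prokhorov-type results already established, using the inequality between the L\'evy and Prokhorov distances. The key ingredients are Theorem~\ref{thm: dppstrassen}, Corollary~\ref{cor:strassendp}, Lemma~\ref{lem:levyprokohorov_p}, and Lemma~\ref{lem:levyprokohorov}. No new constructive work is needed.

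For the first statement, I would fix $m \in \setR$, $\epsilon>0$, and $p \in (0,1]$, and invoke Theorem~\ref{thm: dppstrassen} to obtain a peacock $(\nu_n)_{n\in\setN}$ with mean $m$ satisfying $d_p^\mathrm{P}(\mu_n,\nu_n) \leq \epsilon$ for every $n$. By Lemma~\ref{lem:levyprokohorov_p}, $d_p^\mathrm{L}(\mu_n,\nu_n) \leq d_p^\mathrm{P}(\mu_n,\nu_n) \leq \epsilon$, which is what we want. Thus the first claim follows immediately.

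For the ``in particular'' part (the statement about the ordinary L\'evy distance $d^\mathrm{L}$), the cleanest route is to apply Corollary~\ref{cor:strassendp}, which yields a peacock $(\nu_n)_{n\in\setN}$ with mean $m$ such that $d^\mathrm{P}(\mu_n,\nu_n) \leq \epsilon$ for every $n \in \setN$. Then Lemma~\ref{lem:levyprokohorov} gives $d^\mathrm{L}(\mu_n,\nu_n) \leq d^\mathrm{P}(\mu_n,\nu_n) \leq \epsilon$. Alternatively, one can specialize the first part of the theorem by choosing $p = \min\{\epsilon,1\}$ and then invoke Lemma~\ref{lem:eqballs} to translate the modified L\'evy bound into a bound for the unmodified L\'evy distance; either route works without difficulty.

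There is no serious obstacle, since all the heavy lifting (namely, the construction of a peacock within prescribed Prokhorov distance) has been done in Section~\ref{se:thm P L}. The only minor point to mention is that the case $\epsilon > 1$ in the L\'evy statement is automatic, because $d^\mathrm{L}$ is bounded by $1$, so one may as well assume $\epsilon \in (0,1]$ to invoke Lemma~\ref{lem:eqballs} if needed.
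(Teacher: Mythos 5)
Your proposal is correct and follows exactly the paper's own argument: the first claim via Theorem~\ref{thm: dppstrassen} combined with Lemma~\ref{lem:levyprokohorov_p}, and the second via Corollary~\ref{cor:strassendp} combined with Lemma~\ref{lem:levyprokohorov}. The alternative route you sketch for the ``in particular'' part is a harmless variation, but the main line of reasoning is the same as in the paper.
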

\begin{proof}
   Fix $\epsilon>0$ and $p \in (0,1]$, and let $(\nu_n)_{n \in \setN}$ be the peacock from Theorem~\ref{thm: dppstrassen} resp.\ Corollary~\ref{cor:strassendp}. 
   Then by Lemma \ref{lem:levyprokohorov_p} resp.\ Lemma \ref{lem:levyprokohorov}, we have 
   $\nu_n \in B_p^\mathrm{L}(\mu_n,\epsilon)$ resp. $\nu_n \in B^\mathrm{L}(\mu_n,\epsilon)$ for all $n \in \setN$. 
\end{proof}

   For $\mu \in \setM$, $\epsilon \geq 0$, $p \in (0,1)$, and $m \in \setR$, the set $B_p^\mathrm{L}(\mu,\epsilon)\cap \setM_m$ always contains a least element with respect to $\leqc$, with an explicit
   call function. See Section~2.4.3 in~\cite{Gu16}.

\section{A variant of Strassen's theorem}\label{se:new Strassen}

So far, we discussed the problem of approximating a given sequence of measures~$(\mu_n)$ by a
peacock~$(\nu_n)$.
If the distance is measured by $W^\infty$, then the existence of such a peacock has two
consequences: First, there is a probability space with a martingale~$M^*$
with marginals~$(\nu_n)$ (by Strassen's theorem). Second, the definition of~$W^\infty$
implies that for each $\epsilon'>\epsilon$
there is a probability space supporting processes $\hat M$ and $\hat X$
satisfying $\mathbb{\hat P}[|\hat{M}_n-\hat{X}_n|>\epsilon']=0$ for all~$n$.
It is now a natural question whether a martingale~$M$ with marginals~$(\nu_n)$ can be found
such that there is an adapted process~$X$ satisfying $\mathbb{P}[|M_n-X_n|>\epsilon']=0$.
We answer this question affirmatively for finite sequences of measures with finite support.
This restriction suffices for the financial application that motivated our study (see~\cite{GeGu16}),
and it allows to replace ``for  all $\epsilon'>\epsilon$ $\dots$ $\mathbb{P}[|M_n-X_n|>\epsilon']=0$''
simply by $\mathbb{P}[|M_n-X_n|>\epsilon]=0$.
The result (Theorem~\ref{thm:strassenW}) is a consequence of Theorem~\ref{strassenwinf}
and the following lemma.

\begin{lem}\label{le:finite}
  Let $\epsilon>0$.
  Let $(\nu_n)_{n=1,\dots,n_0}$ be a peacock, and  $(\mu_n)_{n=1,\dots,n_0}$ be a sequence
  of measures in~$\mathcal M$. 
  Assume that there is a finite filtered probability space
  $(\Omega^*,\mathcal{F}^*,(\mathcal{F}^*_n)_{1\leq n\leq n_0},
    \mathbb{P}^*)$ with a martingale~$M^*$ satisfying
  $M_n^*\sim \nu_n$ for $1\leq n\leq n_0$.
  
  Assume further that there is a finite probability space $(\hat \Omega,\hat{\mathcal F},\hat{\mathbb P})$
  supporting two processes~$\hat M$ and~$\hat X$ satisfying $\hat{M}_n\sim \nu_n$,
  $\hat{X}_n\sim \mu_n$ for $1\leq n\leq n_0$ and
  \begin{equation}\label{eq:XM}
    \hat{\mathbb P}[|\hat{M}_n-\hat{X}_n|>\epsilon]=0, \quad n=1,\dots,n_0.
  \end{equation}
  Then there is a finite filtered probability space $(\Omega,\mathcal{F},
  (\mathcal{F}_n)_{1\leq n\leq n_0},\mathbb P)$
  with processes~$M$ and~$X$ combining all properties mentioned, i.e.:
  \begin{itemize}
    \item $M$ is a martingale
    \item $X$ is adapted
    \item $M_n\sim \nu_n$,  $X_n\sim \mu_n$, $\quad n=1,\dots,n_0$,
    \item $\mathbb{P}[|M_n-X_n|>\epsilon]=0, \quad n=1,\dots,n_0$.
  \end{itemize}
\end{lem}
\begin{proof}
  Let $n'\in\{1,\dots,n_0\}$ and assume, inductively,
  that we have already constructed a filtered probability space
  $(\Omega,\mathcal{F},(\mathcal{F}_n),\mathbb P)$
  that satisfies the requirements, where the conditions concerning~$X$ hold
  for $n<n'$, i.e.\ there are processes~$M=(M_n)_{1\leq n \leq n_0}$
  and $X=(X_n)_{1\leq n<n'}$
  such that
  \begin{itemize}
    \item $M$ is a martingale
    \item $X$ is adapted
    \item $M_n\sim \nu_n, \quad n=1,\dots,n_0$,
    \item $X_n\sim \mu_n, \quad 1\leq n<n'$,
    \item $\mathbb{P}[|M_n-X_n|>\epsilon]=0, \quad 1\leq n<n'$.
  \end{itemize}
  Note that in case $n'=1$ (induction base) we may simply take
  $(\Omega,\mathcal{F},(\mathcal{F}_n),\mathbb P)=(\Omega^*,\mathcal{F}^*,(\mathcal{F}^*_n),\mathbb{P}^*)$.
  Let $z\in \mathbb R$ be an arbitrary member of the image of~$M_{n'}$, and define
  \[
    U:= A_1 \cup \dots \cup A_m := (M_{n'})^{-1}(z),
  \]
  where $A_1,\dots,A_m$ are (distinct) atoms of~$\mathcal{F}_{n'}$. We denote the
  preimage of~$z$ in~$\hat \Omega$ by
  \[
    \{\hat{\omega}_1,\dots,\hat{\omega}_l\} := \hat{M}_{n'}^{-1}(z).
  \]
  As $M_{n'}\sim \mu_{n'}\sim \hat{M}_{n'}$, we have
  \begin{equation}\label{eq:PU}
    \mathbb{P}[U]=\mathbb{P}[A_1 \cup \dots \cup A_m] 
      = \hat{\mathbb P}[\{\hat{\omega}_1,\dots,\hat{\omega}_l\}].
  \end{equation}
  To make room for an appropriate~$X_{n'}$ on a new filtered probability space, whose
  constituents will be denoted by $\Omega'$, $\mathcal{F}'$ etc.,
  we divide each ``old'' atom
  \[
     A_r =: \{ \omega_{r1},\dots,\omega_{r k_r} \}, \quad 1\leq r\leq m,
  \]
  into~$l$ ``new'' atoms
  \[
    A_r^{(i)}:=\{\omega^{(i)}_{r1},\dots,\omega^{(i)}_{rk_r}\},\quad 1\leq r\leq m,\ 1\leq i\leq l.
  \]
  Then, define
  \[
    \Omega' := (\Omega\setminus U)\cup \bigcup_{\substack{1\leq r\leq m\\ 1\leq i\leq l}} A_r^{(i)}
  \]
  and $\mathcal{F}':=2^{\Omega'}$.
  We let $\mathbb{P}':=\mathbb{P}$ on $\Omega\setminus U$ and
  \[
    \mathbb{P}'[\omega^{(i)}_{rj}] := \frac{\mathbb{P}[\omega_{rj}]\, \hat{\mathbb{P}}[\hat{\omega}_i]}
    {\sum_{i'=1}^l \hat{\mathbb P}[\hat{\omega}_{i'}]},\quad 1\leq r\leq m,\ 1\leq i\leq l,\ 1\leq j\leq k_r.
  \]
  The sigma-algebra $\mathcal{F}'_{n'}$ is generated by the atoms of~$\mathcal{F}_{n'}$,
  but with each atom~$A_r$ replaced by the atoms $A_r^{(1)},\dots,A_r^{(l)}$. Similarly,
  we define $\mathcal{F}'_{n}$ for $n<n'$ and $n>n'$.
  E.g., if~$A_1$ decomposes into atoms $A_1=B\cup \tilde{B}$
  in $\mathcal{F}_{n'+1}$, then we replace~$B$ and $\tilde{B}$ by
  $B\cap A_1^{(1)},\dots,B\cap A_1^{(l)}$ and $\tilde{B}\cap A_1^{(1)},\dots,\tilde{B}\cap A_1^{(l)}$,
  respectively, and so on. Clearly, this defines a filtered probability space
  $(\Omega',\mathcal{F}',(\mathcal{F}'_n),\mathbb{P}')$. On this space, we define~$M'$
  like~$M$, forgetting that the atoms~$A_1,\dots,A_m$ were split: $M_n':=M_n$ for all~$n$
  on $\Omega\setminus U$ and
  \[
    M_n'(\omega_{rj}^{(i)}) := M_n(\omega_{rj}),\quad 
    1\leq r\leq m,\ 1\leq i\leq l,\ 1\leq j\leq k_r,\ 1\leq n\leq n_0.
  \]
  Thus, the adapted process
  $M'$ has the same marginal laws as~$M$. Now we verify that $M'$ is a martingale.
  Let $n_1>n'$. (The cases of time points $n_1,n_2$ in other positions relative to~$n'$
  work very similarly,
  but need additional cumbersome notation.) First, let~$A'$ be any atom of $\mathcal{F}'_{n'}$ distinct
  from $A_r^{(1)},\dots,A_r^{(l)}$, $1\leq r\leq m$. Then we compute
  \begin{align*}
    \mathbb{E}'[M'_{n_1} | A'] &=
       \frac{\sum_{\omega\in A'}M'_{n_1}(\omega)\mathbb{P}'[\omega]}
        {\mathbb{P}'[A']} \\
    &= \frac{\sum_{\omega\in A'}M_{n_1}(\omega)\mathbb{P}[\omega]}
        {\mathbb{P}[A']}\\
    &= \mathbb{E}[M_{n_1} |A'] = M_{n'}(A')= M'_{n'}(A').
  \end{align*}
  For $r\in\{1,\dots,m\}$ and $i\in\{1,\dots,l\}$, we have
  \begin{align*}
    \mathbb{E}'[M'_{n_1} | A_r^{(i)}] &=
      \frac{\sum_{j=1}^{k_r} M'_{n_1}(\omega_{rj}^{(i)})\mathbb{P}'[\omega_{rj}^{(i)}]}
        {\sum_{j=1}^{k_r}\mathbb{P}'[\omega_{rj}^{(i)}]} \\
      &=
      \frac{\sum_{j=1}^{k_r} M_{n_1}(\omega_{rj})\mathbb{P}[\omega_{rj}]}
        {\sum_{j=1}^{k_r}\mathbb{P}[\omega_{rj}]}\\
      &= \mathbb{E}[M_{n_1}|A_r] = M_{n'}(A_r) = M'_{n'}(A_r^{(i)}).
  \end{align*}
  Therefore, $M'$ is a martingale. Now we define the process $(X'_n)_{1\leq n< n'}$
  as $X$ on $\Omega\setminus U$, and
  \[
    X_{n}'(\omega_{rj}^{(i)}) := X_n(\omega_{rj}),\quad 1\leq r\leq m,\ 1\leq i\leq l,\ 1\leq j\leq k_r,\
      1\leq n<n'.
  \]
  As for~$n'$, we put
  \begin{equation}\label{eq:def X'}
    X_{n'}'(\omega_{rj}^{(i)}):=\hat{X}_{n'}(\hat{\omega}_i),\quad 
    1\leq r\leq m,\ 1\leq i\leq l,\ 1\leq j\leq k_r.
  \end{equation}
  To make the definition complete, let $X_{n'}':=M_{n'}$ on $\Omega\setminus U$, although
  this is of no relevance, because this definition will be overwritten when we continue
  the construction for the next element of the image of~$M_{n'}$.
  As the right hand side of~\eqref{eq:def X'} is independent of~$j$, the process $(X'_n)_{1\leq n\leq n'}$
  is adapted to~$(\mathcal{F}'_n)_{1\leq n\leq n'}$.
  We now show that the random variables
  \[
    X'_{n'}|_U \quad \text{and} \quad
    \hat{X}_{n'}|_{\{ \hat{\omega}_1,\dots,\hat{\omega}_l \}}
  \]
  have the same law. Indeed, for $1\leq i\leq l$ we have
  \begin{align*}
     \mathbb{P}'\big[X'_{n'}|_U &= \hat{X}_{n'}(\hat{\omega}_i)\big]
     = \sum_{r=1}^m \sum_{j=1}^{k_r}
       \sum_{i':\, X'_{n'}(\omega_{rj}^{(i')})=\hat{X}_{n'}(\hat{\omega}_i) }
       \mathbb{P}'[\omega_{rj}^{(i')}] \\
     &= \sum_{r=1}^m \sum_{j=1}^{k_r}
     \sum_{i':\, \hat{X}_{n'}(\hat{\omega}_{i'})=\hat{X}_{n'}(\hat{\omega}_i) }
        \frac{\hat{\mathbb P}[\hat{\omega}_{i'}]\, \mathbb{P}[\omega_{rj}]}
         {\sum_{i''=1}^l \hat{\mathbb P}[\hat{\omega}_{i''}]} \\
     &= \frac{1}{\sum_{i''=1}^l \hat{\mathbb P}[\hat{\omega}_{i''}]}
     \sum_{r=1}^m \sum_{j=1}^{k_r} \mathbb{P}[\omega_{rj}]
     \sum_{i':\, \hat{X}_{n'}(\hat{\omega}_{i'})=\hat{X}_{n'}(\hat{\omega}_i) }
        \hat{\mathbb P}[\hat{\omega}_{i'}] \\
     &= \hat{\mathbb P}\big[
     \hat{X}_{n'}|_{\{ \hat{\omega}_1,\dots,\hat{\omega}_l \}} = \hat{X}_{n'}(\hat{\omega}_i) \big],
  \end{align*}
  where we used~\eqref{eq:PU} in the last inequality.
  It remains to verify
  \[
    \mathbb{P}'[|M'_n-X'_n|>\epsilon] = 0,\quad 1\leq n\leq n'.
  \]
  From the definition of~$M'$ and~$X'$, this is clear for $n<n'$, and for
  $n=n'$ it is obvious that $|M'_n-X'_n|\leq \epsilon$ on $\Omega\setminus U$.
  For an arbitrary element $\omega_{rj}^{(i)}$, we have
  \begin{align*}
    |M'_{n'}(\omega_{rj}^{(i)}) - X'_{n'}(\omega_{rj}^{(i)})|
      &= |M_{n'}(\omega_{rj}) - \hat{X}_{n'}(\hat{\omega}_i)| \\
      &= |\hat{M}_{n'}(\hat{\omega}_i) - \hat{X}_{n'}(\hat{\omega}_i)| \leq \epsilon.
  \end{align*}
  The last inequality follows from~\eqref{eq:XM}, as we may assume w.l.o.g.\ that
  $\hat{\mathbb P}$ puts mass on all elements of~$\hat \Omega$.
  
  Recall that~$U$ was defined as the preimage of~$z$. Repeating the procedure we just described
  for all values in the range of~$M_{n'}$ completes the induction step.
\end{proof}

For the formulation of the main result of this section,
recall the definition of~$\Phi_N$ in~\eqref{eq:Phi}. Theorem~\ref{thm:strassenW}
holds for $\epsilon=0$, too; then it is just a special case of Strassen's theorem
(recall Proposition~\ref{prop:consistent} and~\eqref{eq:Phi consistent}).

\begin{thm}[a variant of Strassen's theorem]\label{thm:strassenW}
   Let $\epsilon>0$ and $(\mu_n)_{n =1,\dots, n_0}$ be a sequence of measures
   in $\setM$ with finite support such that 
   \[
     I:= \bigcap_{1\leq n\leq n_0}[\mathbb{E}\mu_n-\epsilon, \mathbb{E}\mu_n+\epsilon] \neq \emptyset.
   \]
   Then the following conditions are equivalent:
   \begin{itemize}
     \item[(i)] For some $m \in I$
   and for all $1\leq N<n_0$ and  $x_1, \dots, x_N \in \setR$, 
   we have
   \begin{equation*}
     \Phi_N(x_1,\dots,x_N;m,\epsilon)\leq0.
   \end{equation*}
     \item[(ii)] There is a filtered probability space $(\Omega,\mathcal F,(\mathcal{F}_n)_{1\leq n\leq n_0},
     \mathbb P)$ supporting two processes~$M$ and $X$ such that
     \begin{itemize}
        \item $M$ is a martingale w.r.t.\ $(\mathcal{F}_n)_{1\leq n\leq n_0}$
        \item $X$ is adapted to $(\mathcal{F}_n)_{1\leq n\leq n_0}$
        \item $M_n\sim \nu_n, \quad n=1,\dots,n_0$,
        \item $X_n\sim \mu_n, \quad n=1,\dots,n_0$,
        \item $\mathbb{P}[|M_n-X_n|>\epsilon]=0, \quad n=1,\dots,n_0$.
  \end{itemize}
   \end{itemize}
   \begin{proof}
     Suppose that (ii) holds. Since $\mathbb{P}[|M_n-X_n|>\epsilon]=0$, we have
     $W^\infty(\mu_n,\nu_n)\leq \epsilon$. As~$M$ is a martingale, $(\nu_n)$
     is a peacock, and so~(i) follows from (the easy implication of) Theorem~\ref{strassenwinf}.
     
     Now assume that~(i) holds. Then Theorem~\ref{strassenwinf} yields a peacock
     $(\nu_n)_{1\leq n\leq n_0}$ satisfying $W^\infty(\mu_n,\nu_n)\leq \epsilon$
     for $1\leq n\leq n_0$.
     Using Corollary~\ref{cor:munufinite}, we see that the finiteness of the support of
     the~$\mu_n$ implies that we can choose $(\nu_n)_{1\leq n\leq n_0}$ with finite support, too.
     %
     From Strassen's theorem we get a filtered probability space
  $(\Omega^*,\mathcal{F}^*,(\mathcal{F}^*_n),\mathbb{P}^*)$ with a martingale~$M^*$ satisfying
  $M_n^*\sim \nu_n$ for $1\leq n\leq n_0$. Moreover, as $W^\infty(\mu_n,\nu_n)\leq \epsilon$,
  there is a probability space $(\hat \Omega,\hat{\mathcal F},\hat{\mathbb P})$
  with two processes~$\hat M$ and~$\hat X$ satisfying $\hat{M}_n\sim \nu_n$,
  $\hat{X}_n\sim \mu_n$ for all~$n$ and
  \[
    \hat{\mathbb P}[|\hat{M}_n-\hat{X}_n|>\epsilon]=0, \quad n=1,\dots,n_0.
  \]
  (This is an easy consequence of Proposition~\ref{prop:DudleyStrassen} and the finiteness of the supports 
  of~$\mu_n$ and~$\nu_n$.)
  We may assume that both~$\Omega^*$ and~$\hat{\Omega}$ are finite. Indeed, we may clearly
  replace them by the finite sets
  \[
     \text{all intersections of sets from}\quad \{(M^*_n)^{-1}(z):  z \in
       \mathrm{supp}(\nu_n),\ 1\leq n\leq n_0 \} 
  \]
  respectively
   \begin{align*}
     \text{all intersections of sets from}&\quad \{\hat{M}_n^{-1}(z): z \in
       \mathrm{supp}(\nu_n),\ 1\leq n\leq n_0 \} \\
     \text{and}&\quad \{\hat{X}_n^{-1}(z): z \in
       \mathrm{supp}(\mu_n),\ 1\leq n\leq n_0 \}
  \end{align*}
  and update the sigma-algebras and the filtration of~$\Omega^*$ accordingly.
  The assertion then follows from Lemma~\ref{le:finite}.
  \end{proof}
\end{thm}
In future work, we intend to prove an appropriate version of Theorem~\ref{thm:strassenW}
(possibly featuring $d_0^{\mathrm{P}}$ or $d_p^{\mathrm{P}}$ instead of $W^\infty$)
for infinite sequences of general probability measures.
Also, a natural problem is to extend our peacock approximation results to other distances,
such as the $p$-Wasserstein distance $W^p$ ($p\geq 1$). Note that a related problem (involving the \emph{sum} of the $W^2$-distances
of all sequence elements) has been solved in~\cite{Ru85}.

\bibliographystyle{siam}
\bibliography{literatur}

\end{document}